\documentclass[11pt]{article}
\pdftrailerid{}

\usepackage[T1]{fontenc}
\usepackage{lmodern}
\usepackage{microtype}
\usepackage[margin=1in]{geometry}
\usepackage{amsmath,amssymb,amsthm,mathtools}
\usepackage{booktabs,array,tabularx,longtable}
\usepackage{algorithm,algpseudocode}
\usepackage{xcolor}
\usepackage{graphicx}
\usepackage{tikz}
\usetikzlibrary{arrows.meta,calc,positioning,fit,backgrounds}
\usepackage[numbers,sort&compress]{natbib}
\usepackage{url}
\usepackage[hidelinks,pdfauthor={Mohit Sinha},
pdftitle={Finite-Precision Symmetric Krylov Methods}]{hyperref}
\usepackage[font=small,labelfont=bf,labelsep=period]{caption}
\algrenewcommand\alglinenumber[1]{\scriptsize #1}
\algrenewcommand\algorithmicindent{1.25em}

\definecolor{navy}{RGB}{29,61,104}
\definecolor{teal}{RGB}{23,109,103}
\definecolor{softgray}{RGB}{245,246,248}

\newtheorem{theorem}{Theorem}
\newtheorem{proposition}{Proposition}
\newtheorem{lemma}{Lemma}
\newtheorem{corollary}{Corollary}
\newtheorem{assumption}{Assumption}
\theoremstyle{definition}

\newtheorem{remark}{Remark}

\newcommand{\fl}{\operatorname{fl}}
\newcommand{\diag}{\operatorname{diag}}
\newcommand{\dist}{\operatorname{dist}}
\newcommand{\col}{\operatorname{col}}
\newcommand{\R}{\mathbb{R}}
\newcommand{\eps}{\varepsilon}
\title{Finite-Precision Symmetric Krylov Methods:\\
Exact Rounding Examples, Block Paige Identities,\\
and a Variable-Block Lanczos Model}
\author{Mohit Sinha}
\date{}

\begin{document}
\maketitle

\begin{abstract}
Short-recurrence Krylov methods that are equivalent in exact arithmetic often
diverge in floating-point arithmetic. To illustrate this, we provide an exactly
representable two-cycle for steepest descent with a recursively updated
residual. A complementary convergence theorem gives a sufficient condition
under which the stored residual decreases geometrically. We then show that a
second positive definite family yields different outcomes for a Hestenes--Stiefel
Conjugate Gradient (CG) implementation and a direct Lanczos--Galerkin approach.
CG finds the exact solution after four updates, whereas the two-step projected
system becomes inconsistent. Under stated perturbation and transfer hypotheses,
a common spectral enclosure gives comparable convergence bounds. For a dense,
left-to-right evaluation order, we prove a sufficient precision bound for a
prescribed backward error within $n$ updates for an $n\times n$ matrix, together with a
computable stopping test and specified exponent-range assumptions. We use
polynomial estimates and numerical experiments to examine the trade-off
between working precision and the number of CG iterations needed to achieve
a prescribed backward error. We also compare results on inexact matrix-vector products and
preconditioning with the frameworks of Paige and Greenbaum.

For block Lanczos, we analyze Householder orthogonalization and singular-value
truncation as the block size changes. Under componentwise and normwise error
bounds, the computed coefficients satisfy a controlled local recurrence and an
exact block Lanczos relation for a nearby symmetric problem in a larger space.
A block form of Paige's identity bounds the overlap with a Ritz vector along
its residual coordinate direction. An inter-block recurrence describes the
evolution of overlap, and a Gram-matrix argument gives a count of additional
nearby Ritz values. We use physical residuals and numerical rank to distinguish
independent directions from repeated approximations to directions already
represented. Together, these results connect local rounding errors
and changing block rank to the spectral information carried by the computed
vectors.
\end{abstract}

\section{Introduction}
\label{sec:introduction}

Krylov subspace methods approximate the solution of large linear systems by
projecting the problem onto low-dimensional spaces generated by repeated
matrix-vector multiplication. Consider a real linear system $Ax=b$, where
$A\in\R^{n\times n}$ is nonsingular and $b\in\R^n$.
Given an initial approximation $x_0\in\R^n$, define the residual
$r_0=b-Ax_0$ and assume $r_0\ne0$. For a positive integer $m\le n$, the
$m$th Krylov subspace is
\begin{equation}
 \mathcal K_m(A,r_0)
 =\operatorname{span}\{r_0,Ar_0,\ldots,A^{m-1}r_0\}.
 \label{eq:krylov-space}
\end{equation}
Suppose that $\dim\mathcal K_m(A,r_0)=m$. The Arnoldi process constructs an
orthonormal basis $V_m=[v_1,\ldots,v_m]\in\R^{n\times m}$, with
$v_1=r_0/\|r_0\|_2$. Throughout, vectors are real columns,
$\langle x,y\rangle=x^Ty$ is the Euclidean inner product, and $\|\cdot\|_2$
denotes the Euclidean norm for vectors and the spectral norm for matrices.
We write $I_d$ for the identity matrix of order $d$. Thus $V_m^TV_m=I_m$.
For $1\le j\le m$, let $e_j\in\R^m$ denote the $j$th coordinate vector.

A Galerkin approximation is a vector $x_m=x_0+V_my_m$, with
$y_m\in\R^m$, whose residual is orthogonal to $\mathcal K_m(A,r_0)$.
Its coefficients satisfy the projected system
\begin{equation}
 V_m^T(b-Ax_m)=0,
 \qquad (V_m^TAV_m)y_m=\|r_0\|_2e_1.
 \label{eq:galerkin-intro}
\end{equation}
For symmetric $A$, the upper Hessenberg matrix from Arnoldi becomes a
symmetric tridiagonal matrix $T_m=V_m^TAV_m$. In exact arithmetic,
\begin{equation}
 AV_m=V_mT_m+\beta_{m+1}v_{m+1}e_m^T,
 \qquad T_m=V_m^TAV_m,
 \label{eq:lanczos-relation}
\end{equation}
where $\beta_{m+1}=\|(I_n-V_mV_m^T)Av_m\|_2$ and, when this scalar is
positive, $v_{m+1}=(I_n-V_mV_m^T)Av_m/\beta_{m+1}$.
At breakdown the final product is zero. The tridiagonal structure gives the
Lanczos three-term recurrence, allowing the next basis vector to be computed
from the two preceding vectors \cite{Paige1976}.
The same projection approximates eigenvalues. If $T_my=\theta y$ for a
nonzero $y\in\R^m$, then $\theta$ is a Ritz value and $V_my$ is its Ritz
vector in the original space.

When $A$ is symmetric positive definite, the Conjugate Gradient (CG) method
of Hestenes and Stiefel produces the same Galerkin approximations in exact
arithmetic \cite{HestenesStiefel1952,Saad2003}. Its search directions $p_j$
are conjugate with respect to $A$, meaning that $p_i^TAp_j=0$ for $i\ne j$.
The recurrence implicitly factors the Lanczos matrix as $LDL^T$, with $L$
unit lower triangular and $D$ diagonal \cite{PaigeSaunders1975}.

Floating-point rounding changes this equivalence. Although CG and direct
Lanczos--Galerkin express the same projection in exact arithmetic, one
implementation updates a residual and a search direction while the other
normalizes basis vectors and computes tridiagonal coefficients, so the
rounding errors enter different operations and propagate through different
recurrences. The stored residual can drift from $b-Ax_m$. The Lanczos
vectors can lose mutual orthogonality. With block vectors, a residual block
may lose numerical rank, so the next can have fewer columns.

Problems 2.15--2.19 in Section 2.6 of the Simons workshop report
\cite{OpenProblems2026}, recorded by Chen and Greenbaum, concern these
effects. They ask when a recursively updated residual becomes small and how
the computed CG and Lanczos iterations are related. The precision required
for a backward-error guarantee within $n$ steps is a separate question.
The final two problems concern inexact operator applications and the extension
of Paige's analysis to block Lanczos, including block orthogonalization and
deflation. The results below address these questions through rounding
constructions, convergence bounds, and a spectral interpretation of the
computed block recurrence.

The analysis of such effects has a long history. Paige's local rounding-error
relations connect Ritz convergence to loss of orthogonality
\cite{Paige1976,Paige1980}, showing how an accurate eigenvalue approximation
can emerge from a basis whose columns are losing orthogonality and providing
the basis for the selective and partial reorthogonalization strategies of
Parlett and Scott and of Simon \cite{ParlettScott1979,Simon1984Reorth}.
Cullum and Willoughby studied spurious Ritz values
\cite{CullumWilloughby1980,CullumWilloughby1985}, and W\"ulling analyzed
the stabilization of Ritz clusters \cite{Wulling2005,Wulling2005BIT}.
Parlett and Reid analyzed how to track Lanczos convergence
\cite{ParlettReid1981}, and Parlett presented the associated symmetric
eigenvalue theory \cite{Parlett1998}.
Augmented formulations subsequently made the relation between computed and
orthogonal bases more precise
\cite{Paige2010Augmented,Paige2019,Paige2024,ChangPaigeTitleyPeloquin2021}.

Greenbaum's backward-error analysis gives another interpretation
\cite{Greenbaum1989}. Under local perturbation hypotheses, the computed
recurrence can be compared with an exact recurrence for a larger symmetric
matrix whose eigenvalues form clusters near those of the original matrix,
thereby changing the polynomial approximation problem that governs convergence
\cite{GreenbaumStrakos1992,Strakos1991,Notay1993,GergelitsStrakos2014}.
Convergence can consequently be delayed.
The residual gap has its own error accumulation, studied by Greenbaum
\cite{Greenbaum1997Residual} and in analyses of residual replacement and
coupled recurrences \cite{SleijpenVanderVorst1996,VanderVorstYe2000,
SleijpenVanderVorstModersitzki2000,GutknechtStrakos2000}.
Meurant and Strako\v{s} \cite{MeurantStrakos2006} examine these developments
together. Local orthogonality also supports error estimation in CG
\cite{StrakosTichy2002,StrakosTichy2005,MeurantTichy2024}.

For a symmetric positive definite matrix $A$, rounding errors of size
$u\|A\|_2$ are comparable with its smallest eigenvalue when $u\kappa_2(A)$
is of order one, where $\kappa_2(A)$ is the condition number in the $2$-norm
and $u$ is the unit roundoff. A symmetric perturbation can destroy positive
definiteness once its norm reaches the smallest eigenvalue.
Convergence also depends on the accumulation of errors
over successive steps. An iteration may converge accurately after more
steps than its exact-arithmetic counterpart, even though its tridiagonal
coefficients differ substantially from the exact ones. The sensitivity
of these coefficients is related to the underlying moment problem
\cite{Knizhnerman1996,ChenTrogdon2024}.

The steepest-descent construction has an exactly representable two-cycle
at $u\kappa_2(A)=8$.
The stored and true residuals agree throughout that cycle. A complementary
theorem proves geometric decrease of the stored residual under a quantitative
smallness condition, with a sharper bound for diagonal matrices.
For CG and direct Lanczos--Galerkin, a second positive definite family leads
to an exact solution after four CG updates and an inconsistent two-step
Galerkin system, with Paige's local identity identifying the overlap that
allows the projected system to become singular even though the original
matrix is positive definite. A common spectral enclosure gives a
complementary comparison bound. Its hypotheses specify the enlarged exact
models and the transfer to computed quantities.

For the dense evaluation order studied here, we derive sufficient
significand precision for a prescribed
backward error within $n$ updates and give a computable stopping test.
An interpolation estimate describes the difficulty of controlling a degree-$n$
polynomial on small spectral intervals. Variable-precision experiments show
how allowing additional iterations changes the precision found for particular
matrices. These results complement finite-precision matrix-function bounds
\cite{DruskinKnizhnerman1991,DruskinGreenbaumKnizhnerman1998,
MuscoMuscoSidford2018,ChenGreenbaumMuscoMusco2022} and recent analyses of
Krylov solvers in bit-complexity and mixed-precision models.
Inexact products lead to a related perturbation analysis, reviewed in
Section~\ref{sec:inexact}.

Block Lanczos methods for multiple and clustered eigenvalues were developed
by Cullum and Donath,
Underwood, and Golub and Underwood
\cite{CullumDonath1974,Underwood1975,GolubUnderwood1977}.
Band formulations and rank-deflation methods allow dependent directions to
be removed as the iteration proceeds
\cite{Ruhe1979,BaiDayYe1999,AliagaBoleyFreundHernandez2000,
FreundMalhotra1997,Baglama2000}.
Grimes, Lewis, and Simon used inter-block overlap recurrences to guide partial
reorthogonalization \cite{GrimesLewisSimon1994}.
More recently, \v{S}imonov\'a and Tich\'y developed a block continuation of
Greenbaum's model under sufficient conditions and distinguished proper from
improper Ritz clusters \cite{SimonovaTichy2025}, providing a comparison
for the variable-block construction below and identifying the need to relate
block Ritz convergence to the evolving loss of orthogonality.

We analyze block Lanczos with Householder orthogonalization and a compact
singular value decomposition (SVD) to determine the rank of each new block.
The local error bound remains applicable when the next block has fewer
columns. After normalizing the computed blocks, we construct an exact block
Lanczos relation for a nearby symmetric problem in a larger space.
To interpret the computed Ritz values, we first examine the exact-arithmetic
projection. Decomposing the residual into its components in the existing
space and orthogonal to that space gives a Gram identity for the rank of
the next block. A separate exact-arithmetic construction shows how repeated
Ritz values can correspond to independent vectors, in agreement with the
matrix-measure and block quadrature viewpoint
\cite{GolubMeurant2010,FenuMartinReichelRodriguez2013}.

In finite precision, newly computed blocks can lose orthogonality to
earlier Ritz vectors. The block Paige identity bounds the component of
this overlap along the residual coordinate direction. An inter-block
recurrence describes how the overlap evolves. Under a stated residual
bound, singular vectors of the overlap matrix give a trial subspace from
which we obtain a lower bound on the number of nearby Ritz values.
We then use physical residuals and numerical rank to assess how many of
the associated Ritz vectors are independent in the original space.
In the matrix-free Hessian example, sixteen vectors pass a residual test
and form a matrix of numerical rank five. A rank-aware recurrence and
established eigensolvers return sixteen independent directions.
The analysis explains how a cluster of computed Ritz values can contain
repeated approximations to the same directions in the original space.

\section{Floating-point model and algorithms}
\label{sec:arithmetic-model}

We now specify the arithmetic and the order of operations. These choices
determine the computed recurrences. Lanczos and CG coefficients receive
different labels whenever both occur in the same calculation
\cite[p.~200]{Saad2003}.

Let $u$ denote the unit roundoff. For a nonsingular matrix $A$, let
$\kappa_2(A)=\|A\|_2\|A^{-1}\|_2$ denote its condition number in the $2$-norm.
For symmetric positive definite $A$, this is the ratio of its largest
eigenvalue to its smallest eigenvalue.

Let $p\ge2$ be the number of bits in a binary significand, including the
leading bit. Then $u=2^{-p}$. The exact constructions use rounding to
nearest with ties to even. Write $\mathbb F\subset\R$ for the representable
numbers and $\fl$ for a rounded operation. Every scalar assignment and vector
component is stored in the target format before the next operation.
Products in vector updates are rounded before addition or subtraction.
Consequently, algebraically equivalent rearrangements can produce different
iterates, as Proposition~\ref{prop:cg-curvature-loss} illustrates.
For operations in the normal range, the relative error model is
\begin{equation}
 \fl(x\circ y)=(x\circ y)(1+\delta),\qquad
 |\delta|\le u,\qquad \circ\in\{+,-,\times,/\}.
 \label{eq:relative-rounding-model}
\end{equation}
Exact zero results are allowed. Statements about finite exponent ranges
specify their additional assumptions.
For a positive integer $s$ with $su<1$, define $\gamma_s=su/(1-su)$.
If $x,y\in\R^s$, an inner product accumulated from left to right satisfies
\begin{equation}
 |\fl(\langle x,y\rangle)-\langle x,y\rangle|
 \le\gamma_s|x|^T|y|,
 \label{eq:dot-error-model}
\end{equation}
where absolute values are componentwise \cite[Chap.~3]{Higham2002}.

For an approximate solution $\widehat x\in\R^n$, the normwise relative
backward error is
\begin{equation}
 \eta(\widehat x)=\frac{\|b-A\widehat x\|_2}
 {\|A\|_2\|\widehat x\|_2+\|b\|_2}.
 \label{eq:backward-error}
\end{equation}
This is the smallest relative normwise perturbation of $A$ and $b$ that
makes $\widehat x$ an exact solution \cite{RigalGaches1967,Higham2002}.
When both numerator and denominator vanish, we set $\eta(\widehat x)=0$.
The true residual is $b- A\widehat x$, evaluated over the real numbers.
The symbol $r_j$ denotes the stored vector generated by the recurrence.
Their difference is the residual gap.

\begin{algorithm}[htbp]
\caption{Steepest descent with a recursively updated residual}
\label{alg:sd}
\begin{algorithmic}[1]
\Statex Given $A$, $b$, and $x_0\in\mathbb F^n$.
\State $r_0\gets\fl(b-Ax_0)$
\For{$j=0,1,\ldots$}
  \If{$r_j=0$}
    \State \Return $x_j$
  \EndIf
  \State $q_j\gets\fl(Ar_j)$
  \State $a_j\gets\displaystyle\fl\!\left(
    \frac{\fl(\langle r_j,r_j\rangle)}{\fl(\langle q_j,r_j\rangle)}\right)$
  \State $x_{j+1}\gets\fl(x_j+a_jr_j)$
  \State $r_{j+1}\gets\fl(r_j-a_jq_j)$
\EndFor
\Statex Each matrix-vector product and inner product is accumulated in increasing
component order. The step is defined when its denominator is nonzero and
the stated arithmetic assumptions hold.
\end{algorithmic}
\end{algorithm}

The Hestenes--Stiefel CG recurrence \cite{HestenesStiefel1952,Saad2003}
starts from $r_0=\fl(b-Ax_0)$ and $p_0=r_0$. At a step with $r_j\ne0$,
it evaluates
\begin{equation}
\begin{aligned}
 q_j&=\fl(Ap_j),\\
 a_j^{\rm cg}&=\fl\!\left(
       \frac{\fl(\langle r_j,r_j\rangle)}
            {\fl(\langle q_j,p_j\rangle)}\right),\\
 x_{j+1}&=\fl(x_j+a_j^{\rm cg}p_j),\qquad
 r_{j+1}=\fl(r_j-a_j^{\rm cg}q_j),\\
 b_{j+1}^{\rm cg}&=\fl\!\left(
       \frac{\fl(\langle r_{j+1},r_{j+1}\rangle)}
            {\fl(\langle r_j,r_j\rangle)}\right),\\
 p_{j+1}&=\fl(r_{j+1}+b_{j+1}^{\rm cg}p_j).
\end{aligned}
\label{eq:hscg}
\end{equation}
An exactly zero residual terminates the CG iteration. Algorithm~\ref{alg:tested-cg}
below adds a true-residual stopping test to this recurrence.

For direct Lanczos--Galerkin, set $v_0=0$ and compute
\[
 \rho_0=\fl(\langle r_0,r_0\rangle),\qquad
 \beta_1^{\rm lan}=\fl(\sqrt{\rho_0}),\qquad
 v_1=\fl(r_0/\beta_1^{\rm lan}).
\]
For $j=1,2,\ldots$, compute
\begin{equation}
\begin{aligned}
 q_j&=\fl(Av_j),\\
 w_j&=\fl(q_j-\beta_j^{\rm lan}v_{j-1}),\\
 \alpha_j^{\rm lan}&=\fl(\langle v_j,w_j\rangle),\\
 z_j&=\fl(w_j-\alpha_j^{\rm lan}v_j),\\
 \rho_j&=\fl(\langle z_j,z_j\rangle),\\
 \beta_{j+1}^{\rm lan}&=\fl(\sqrt{\rho_j}),\\
 v_{j+1}&=\fl(z_j/\beta_{j+1}^{\rm lan}).
\end{aligned}
\label{eq:scalar-lanczos-order}
\end{equation}
A zero $\beta_{j+1}^{\rm lan}$ ends the recurrence before the division.
The symmetric tridiagonal matrix $T_m$ has diagonal entries
$\alpha_1^{\rm lan},\ldots,\alpha_m^{\rm lan}$ and off-diagonal entries
$\beta_2^{\rm lan},\ldots,\beta_m^{\rm lan}$.
This is the conventional ordering analyzed by Paige
\cite{Paige1976,Paige1980}. The direct Galerkin approximation solves
$T_my=\beta_1^{\rm lan}e_1$ and forms $\fl(x_0+V_my)$
\cite[Secs.~6.6--6.7]{Saad2003}. Writing each rounded assignment separately
makes the evaluation order unambiguous.
\section{Steepest descent in finite precision}

Algorithm~\ref{alg:sd} generates a recursively updated residual that can
diverge from the true residual $b-Ax_j$.  We first examine an exactly
representable system for which the two residuals agree at every step.
The iteration enters a periodic orbit.

\begin{theorem}[A two-cycle for steepest descent with a recursive residual]
\label{thm:sd-cycle}
Fix a binary floating-point format with significand precision $p\ge2$,
rounding to nearest with ties to even, and unit roundoff $u=2^{-p}$.  Define
\[
 a=\frac{u}{8},\qquad X=\frac{8}{u},
\]
and assume that $a$ and $X$ are representable normal numbers.  Consider the
linear system
\[
 A=\diag(a,1),\qquad b=(2,1)^T,\qquad x_0=(X,0)^T.
\]
Let $x_j$ and $r_j$ denote the stored iterate and residual generated by
Algorithm~\ref{alg:sd}, with the operations evaluated in the displayed
order.  Then the iteration is well defined for every $j\ge0$, and
\[
\begin{aligned}
 x_{2j}&=(X,0)^T,   & r_{2j}&=(1,1)^T,\\
 x_{2j+1}&=(X,2)^T, & r_{2j+1}&=(1,-1)^T.
\end{aligned}
\]
Furthermore,
\[
 r_j=b-Ax_j,
 \qquad
 \|b-Ax_j\|_2=\sqrt{2}
 \quad (j\ge0),
\]
where $b-Ax_j$ is evaluated in exact real arithmetic.  Both residuals
therefore have constant nonzero norm.
\end{theorem}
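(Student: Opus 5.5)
The proof is by direct computation: follow every rounded operation of Algorithm~\ref{alg:sd} through one full period, and close with induction on $j$. Only three facts about the format are needed. First, $a=u/8=2^{-p-3}$, $2a=u/4$ and $X=2^{p+3}$ are powers of two, so multiplying by them, or by $2$, is exact. Second, the gap between $1$ and the next larger float is $2u$, so any real number in $(1,1+u)$ rounds to $1$. Third, the gap just below $1$ is $u$, so any real number in $(1-u/2,1)$ rounds to $1$. In addition, the gap at $X=2^{p+3}$ is $2^{p+3-(p-1)}=16$, so $X+2$ rounds to $X$. None of these roundings is a tie, so ties to even never matters. Every intermediate quantity is either $0$ or has magnitude between $a$ and $X$, so it stays in the normal range, where \eqref{eq:relative-rounding-model} applies.

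\emph{Initialization.} We have $Ax_0=(aX,0)^T=(1,0)^T$, computed exactly, so $r_0=\fl(b-Ax_0)=(1,1)^T$ exactly. This also equals the true residual.

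\emph{Induction hypothesis.} For some $j\ge0$, $x_j=(X,0)^T$ and $r_j=(1,1)^T$, or $x_j=(X,2)^T$ and $r_j=(1,-1)^T$. Write $r_j=(1,\sigma)^T$ with $\sigma=\pm1$.

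\emph{One step, common to both cases.}
\begin{enumerate}
\item $q_j=\fl(Ar_j)=(a,\sigma)^T$, exactly.
\item $\fl(\langle r_j,r_j\rangle)=\fl(1+1)=2$, exactly.
\item The left-to-right sum gives $\fl(\langle q_j,r_j\rangle)=\fl(a+\sigma^2)=\fl(1+u/8)=1$. This is nonzero, so the step is defined.
\item The step length is $a_j=\fl(2/1)=2$.
\item For the iterate, $\fl(a_jr_j)=(2,2\sigma)^T$ exactly. The first component of $x_{j+1}$ is $\fl(X+2)=X$. The second is $0+2\sigma$ or $2+2\sigma$, a small integer computed exactly. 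Together these give $(X,2)^T$ from $(X,0)^T$ with $\sigma=1$, and $(X,0)^T$ from $(X,2)^T$ with $\sigma=-1$.
\item For the residual, $\fl(a_jq_j)=(u/4,2\sigma)^T$ exactly. Then $r_{j+1}=(\fl(1-u/4),\,\sigma-2\sigma)^T=(1,-\sigma)^T$.
\end{enumerate}
This reproduces the hypothesis with the parity flipped, which establishes both displayed formulas for all $j$. Since every $r_j\ne0$, the algorithm never returns, and the iteration is well defined for all $j\ge0$.

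\emph{True residuals.} Evaluate in exact arithmetic: $b-A(X,0)^T=(2-1,1)^T=(1,1)^T$ and $b-A(X,2)^T=(1,1-2)^T=(1,-1)^T$. These coincide with the stored residuals, each of norm $\sqrt2$.

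The only delicate point is step 3 together with the two absorptions in steps 5 and 6. The proof hinges on the curvature term $a$ and the $2a$ correction both lying strictly below the relevant half-gaps ($u/8<u$ above $1$, $u/4<u/2$ below $1$, $2<8$ at $X$). I would state these three inequalities explicitly as a lemma-level remark, citing the grid spacing of binary floating point. I would also confirm that the increasing-order summation of the inner product adds $a$ to $0$ first, and then $1$. Without this, the comparison in step 3 would not be the one stated.
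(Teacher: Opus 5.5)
Your proposal is correct and follows essentially the same route as the paper: you use the three strict rounding facts $\fl(1+u/8)=1$, $\fl(1-u/4)=1$, and $\fl(X+2)=X$, taken from the grid spacing, together with the exact initialization $r_0=(1,1)^T$ from $aX=1$. Then, as in the paper's parametrization by $s\in\{1,-1\}$, you do a one-step induction on the state $x=(X,1-\sigma)^T$, $r=(1,\sigma)^T$ with $\sigma=\pm1$, and close with an exact true-residual check. Your concern about the summation order in step~3 is harmless but unnecessary, since a single rounded two-term sum satisfies $\fl(a+1)=\fl(1+a)$.
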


\begin{proof}
The representable numbers immediately adjacent to one are $1-u$ and
$1+2u$.  Its rounding interval extends downward by $u/2$ and upward by $u$.
The choices $a=u/8$ and $2a=u/4$ place $1+a$ and $1-2a$ strictly
inside these midpoint boundaries, giving
\[
 \fl(1+a)=1,\qquad \fl(1-2a)=1,
\]
Rounding to nearest
commutes with sign reversal.  At $X=2^{p+3}$ the successor spacing is $16$,
so $\fl(X+2)=X$.

Since $aX=1$ exactly, the initial stored residual is $(1,1)^T$.  Suppose
$s\in\{1,-1\}$ and
\[
 x=(X,1-s)^T,\qquad r=(1,s)^T.
\]
The matrix-vector product $q=Ar=(a,s)^T$ is exact.  The stored inner products
evaluate to
\[
 \fl(r^Tr)=2,
 \qquad
 \fl(r^Tq)=\fl(a+1)=1.
\]
The stored step size is consequently $2$.  The componentwise updates give
\[
 \fl(x+2r)=(X,1+s)^T,
 \qquad
 \fl(r-2q)=(1,-s)^T.
\]
One iteration exchanges the two states, which proves the cycle by induction.
Furthermore,
\[
 b-A(X,1-s)^T=(2-aX,1-(1-s))^T=(1,s)^T,
\]
establishing equality of the stored and true residuals and the asserted norm.
\end{proof}

The alternating directions recall the classical two-limit behavior of
steepest descent \cite[Sec.~2.7]{OpenProblems2026}.  Here the recurrence
forms an exact floating-point cycle at $\kappa_2(A)u=8$, and the residual
gap is zero.  Bollen's convergence analyses impose small-roundoff
conditions \cite{Bollen1979,Bollen1984}, while Greenbaum's attainable-accuracy
analysis describes the accumulated difference between true and recursively
updated residuals \cite{Greenbaum1997Residual}.  Theorem~\ref{thm:sd-cycle}
gives the nonconvergent example requested in Problem~2.15.
We next give a sufficient condition for geometric decrease of the stored
residual.
\subsection{Geometric decrease of the recursively updated residual}
\label{sec:sd-positive}

In the two-cycle, the correction in the first component is smaller than
half the distance to the next representable iterate.  The relation
$\kappa_2(A)u=8$ places this example at a very different scale from the
small-roundoff assumptions used in convergence analyses
\cite{Bollen1984}.  We now give a quantitative condition under which the
recursively updated residual decreases geometrically.  The argument first
bounds the error in one step and then applies the exact steepest-descent
contraction.

Throughout this subsection, $A\in\mathbb F^{n\times n}$ is symmetric positive
definite. Let $\lambda_{\min}$ and $\lambda_{\max}$ denote its smallest and
largest eigenvalues.
We use the norms
\[
 \|x\|_{A^{-1}}=(x^TA^{-1}x)^{1/2},
 \qquad
 \|x\|_A=(x^TAx)^{1/2}.
\]
For a nonzero residual $r$, the exact steepest-descent coefficient is
\[
 \tilde a(r)=\frac{\langle r,r\rangle}{\langle Ar,r\rangle}.
\]
The exact residual after the step is $(I-\tilde a(r)A)r$.
We will compare the computed residual with this vector.

To state the rounding-error bound, define
\[
 c_A=\frac{\|\,|A|\,\|_2}{\|A\|_2}\in[1,\sqrt n\,],
\]
where $|A|$ is the matrix of absolute values. Inner products and the rows
of a matrix-vector product are accumulated in any fixed order. Under the
model in Section~\ref{sec:arithmetic-model}, their errors satisfy
\[
 |\fl(Ar)-Ar|\le\gamma_n|A||r|,
 \qquad
 |\fl(\langle x,y\rangle)-\langle x,y\rangle|
 \le\gamma_n|x|^T|y|,
\]
componentwise for the first inequality \cite[Sec.~3.5]{Higham2002}.

\begin{lemma}[Rounding error in one steepest-descent step]
\label{lem:sd-one-step}
Let $r\in\mathbb F^n$ be nonzero and suppose
$c_A\gamma_n\kappa_2(A)\le1/16$.
Compute the matrix-vector product, step coefficient, and next residual by
\[
\begin{aligned}
 q&=\fl(Ar),\\
 a&=\fl\!\left(
       \frac{\fl(\langle r,r\rangle)}{\fl(\langle q,r\rangle)}
       \right),\\
 r_i^+&=\fl\bigl(r_i-\fl(aq_i)\bigr),\qquad 1\le i\le n.
\end{aligned}
\]
Assume that these operations incur neither underflow nor overflow.
Then $\fl(\langle q,r\rangle)>0$ and $a>0$.
There is a vector $f\in\R^n$ such that
\begin{equation}
 r^+=(I-\tilde a(r)A)\,r+f,
 \qquad
 \|f\|_{A^{-1}}\le14\,c_A\gamma_n\kappa_2(A)\,\|r\|_{A^{-1}} .
 \label{eq:sd-perturbed-step}
\end{equation}
\end{lemma}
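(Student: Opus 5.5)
We track each rounded quantity through the relative error model \eqref{eq:relative-rounding-model} and the inner-product bounds stated above. Then we collect every deviation from the exact step into a single vector $f$ and measure it in the $A^{-1}$-norm.

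Write $\epsilon=c_A\gamma_n\kappa_2(A)\le1/16$.

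\textbf{Step 1 (matrix-vector product).} Set $q=Ar+e$ with $|e|\le\gamma_n|A||r|$. Then $\|e\|_2\le\gamma_n\||A|\|_2\|r\|_2=c_A\gamma_n\|A\|_2\|r\|_2$.

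\textbf{Step 2 (denominator and positivity).} Write $\fl(\langle q,r\rangle)=\langle Ar,r\rangle+\langle e,r\rangle+g$ with $|g|\le\gamma_n|q|^T|r|$. Both error terms are at most a small multiple of $c_A\gamma_n\|A\|_2\|r\|_2^2$; for $g$ use $\||q|\|_2\le(1+\gamma_n)c_A\|A\|_2\|r\|_2$. Compare this with $\langle Ar,r\rangle\ge\lambda_{\min}\|r\|_2^2$. The relative perturbation of the denominator is then bounded by roughly $(2+\gamma_n)\epsilon\le 1/8+\cdots$, which gives $\fl(\langle q,r\rangle)>0$. The numerator $\fl(\langle r,r\rangle)=\|r\|_2^2(1+\theta)$ with $|\theta|\le\gamma_n$ is positive, so $a>0$. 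Consequently $a=\tilde a(1+\delta_a)$ with $|\delta_a|\le C_1\epsilon$ for an explicit small $C_1$ (about $2.2$), obtained by carefully bounding quotients of the form $(1+\gamma_n)(1+u)/(1-2.1\epsilon)$.

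\textbf{Step 3 (update).} Componentwise, $r^+_i=(r_i-aq_i(1+\mu_i))(1+\nu_i)$ with $|\mu_i|,|\nu_i|\le u$. Hence
\[
 f=r^+-(I-\tilde aA)r=-\tilde a\delta_aAr-ae-a\,D_\mu q+D_\nu\bigl(r-aq(1+\mu)\bigr),
\]
where $D_\mu$ and $D_\nu$ are diagonal matrices with entries at most $u\le\gamma_n/n$.

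\textbf{Step 4 (norm bounds).} Bound each term in $\|\cdot\|_{A^{-1}}$ using $\|y\|_{A^{-1}}\le\lambda_{\min}^{-1/2}\|y\|_2$, $\|r\|_2\le\lambda_{\max}^{1/2}\|r\|_{A^{-1}}$, $\tilde a\le1/\lambda_{\min}$, and $\|Ar\|_{A^{-1}}=\|r\|_A$. The key exact-step facts are $\tilde a\|Ar\|_{A^{-1}}\le\kappa_2(A)^{1/2}\|r\|_{A^{-1}}$ and $\|(I-\tilde aA)r\|_{A^{-1}}\le\|r\|_{A^{-1}}$. A single factor $\kappa_2^{1/2}$ would suffice for some terms, but to keep things uniform we bound everything by $\kappa_2(A)\|r\|_{A^{-1}}$ times a constant times $c_A\gamma_n$.
\begin{itemize}
\item The term $ae$ gives at most $(1+C_1\epsilon)\lambda_{\min}^{-3/2}c_A\gamma_n\|A\|_2\|r\|_2\le(1+C_1\epsilon)\,c_A\gamma_n\kappa_2(A)\|r\|_{A^{-1}}$.
\item The term $\tilde a\delta_aAr$ gives $C_1\epsilon\,\kappa_2^{1/2}\|r\|_{A^{-1}}$. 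Since $\epsilon$ itself contains $c_A\gamma_n\kappa_2$, this is at most $C_1c_A\gamma_n\kappa_2\|r\|_{A^{-1}}$ times $\kappa_2^{1/2}$. This factor is too big, so it must be bounded differently: write $\tilde a\delta_aAr=\delta_a\bigl(r-(I-\tilde aA)r\bigr)$, giving $\le2|\delta_a|\|r\|_{A^{-1}}\le2C_1\epsilon\|r\|_{A^{-1}}$. This trick keeps the term linear in $\epsilon$.
\item The rounding terms from $D_\mu$ and $D_\nu$ are handled similarly: $D_\nu$ applied to the exact-plus-small next residual, and $D_\mu$ to $aq$. Each contributes $u\,\kappa_2^{1/2}$-type factors, which are $\le\gamma_n\kappa_2$.
\end{itemize}
Summing the terms and using $\epsilon\le1/16$ to absorb the higher-order contributions yields a total constant below $14$.

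The main obstacle is the constant bookkeeping in Steps 2 and 4, in particular ensuring that the coefficient error $\delta_a$ enters only linearly through the rewriting above, and that the $A^{-1}$-norm conversions of the diagonal rounding matrices do not produce worse than one factor of $\kappa_2$. I would first derive crude constants such as $C_1\le 2.5$, then verify that the sum, roughly $2\cdot2.5+1.2+\text{small}$, stays well under $14$, leaving slack.
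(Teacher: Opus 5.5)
Your decomposition is the same as the paper's: write $a=\tilde a(1+\delta_a)$, expand $f=-\delta_a\tilde aAr-ae+(\text{update roundings})$, and bound each piece in the $A^{-1}$-norm. The genuine gap is in Step~4, in the first-order term $ae$. Your chain
\[
\|ae\|_{A^{-1}}\le(1+C_1\epsilon)\lambda_{\min}^{-3/2}c_A\gamma_n\|A\|_2\|r\|_2\le(1+C_1\epsilon)\,c_A\gamma_n\kappa_2(A)\|r\|_{A^{-1}}
\]
fails at the second inequality. That inequality is equivalent to $\|r\|_2\le\lambda_{\min}^{1/2}\|r\|_{A^{-1}}$, but the reverse inequality always holds. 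For $r$ an eigenvector of $\lambda_{\max}$, the middle expression exceeds the right side by $\sqrt{\kappa_2(A)}$.

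With the tools you list, $\tilde a\le\lambda_{\min}^{-1}$ and $\|r\|_2\le\lambda_{\max}^{1/2}\|r\|_{A^{-1}}$, you only get $c_A\gamma_n\kappa_2(A)^{3/2}\|r\|_{A^{-1}}$. This term carries no extra factor $u$, so it cannot be absorbed into $14c_A\gamma_n\kappa_2(A)$. The loss comes from decoupling $\tilde a$ from $\|r\|_2$: $\tilde a$ is far below $\lambda_{\min}^{-1}$ exactly when $r$ leans toward the top of the spectrum, which is when $\|r\|_2/\|r\|_{A^{-1}}$ is large.

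The missing ingredient is the coupled estimate the paper uses,
\[
\tilde a\|r\|_2=\frac{\|r\|_2^3}{\|r\|_A^2}\le\lambda_{\min}^{-1/2}\|r\|_{A^{-1}},
\]
which follows from $\langle r,r\rangle\le\|r\|_A\|r\|_{A^{-1}}$ (Cauchy--Schwarz) and $\|r\|_2\le\lambda_{\min}^{-1/2}\|r\|_A$. It gives $\|\tilde a e\|_{A^{-1}}\le\lambda_{\min}^{-1/2}\tilde a\,c_A\gamma_n\|A\|_2\|r\|_2\le c_A\gamma_n\kappa_2(A)\|r\|_{A^{-1}}$. If you route the rounding of the products $aq_i$ through $\|q\|_2$, the same estimate keeps that term at a single factor $u\kappa_2(A)$.

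The same Cauchy--Schwarz inequality also gives $\|\tilde aAr\|_{A^{-1}}=\tilde a\|r\|_A\le\|r\|_{A^{-1}}$ directly. Your rewriting through $(I-\tilde aA)r$ is correct but costs an unnecessary factor $2$. That factor matters because $C_1\approx2.2$ is too optimistic. A uniform proof can only use $u\le\gamma_n\le\epsilon$, and these are nearly attained (for example when $n=1$). The numerator and division errors then add up to $2\epsilon$ to the roughly $2.06\epsilon$ from the denominator, and your own quotient yields only $|\delta_a|\lesssim4.7\epsilon$; the paper uses $6.35\eta_0$.

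With your factor $2$, that term alone is about $9.5\epsilon$. Adding roughly $1.3\epsilon$ for $ae$ and a few $\epsilon$ for the two update roundings leaves little or no margin below $14\epsilon$. With the factor-one bound, the total is comfortably below $14\epsilon$.
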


\begin{proof}
Put $\tilde a=\tilde a(r)$ and
$\eta_0=c_A\gamma_n\kappa_2(A)\le1/16$.
We first estimate the relative error in the computed coefficient $a$.
The matrix-vector product has the form $q=Ar+\delta q$, with
\[
 \|\delta q\|_2\le\gamma_n c_A\|A\|_2\|r\|_2.
\]
Since $\langle Ar,r\rangle\ge\lambda_{\min}\|r\|_2^2$,
\[
 \langle q,r\rangle=\langle Ar,r\rangle(1+\eta),
 \qquad
 |\eta|\le
 \frac{\gamma_n c_A\|A\|_2\|r\|_2^2}{\langle Ar,r\rangle}
 \le\eta_0.
\]

The first inner product is a sum of nonnegative terms, so
\[
 \fl(\langle r,r\rangle)=\langle r,r\rangle(1+\theta_1),
 \qquad |\theta_1|\le\gamma_n.
\]
For the second inner product, the error bound and the preceding estimate
give
\[
\begin{aligned}
 |\fl(\langle q,r\rangle)-\langle q,r\rangle|
   &\le\gamma_n(1+\eta_0)\|A\|_2\|r\|_2^2,\\
 \langle q,r\rangle
   &\ge(1-\eta_0)\lambda_{\min}\|r\|_2^2.
\end{aligned}
\]
Thus $\fl(\langle q,r\rangle)=\langle q,r\rangle(1+\theta_2)$, where
\[
 |\theta_2|
 \le\gamma_n\kappa_2(A)\frac{1+\eta_0}{1-\eta_0}
 \le2\eta_0\le\frac18.
\]
In particular, the computed denominator is positive.

Let $\delta_3$ be the relative error in the division, so that
$|\delta_3|\le u$. The computed coefficient satisfies
\[
 a=\tilde a\,(1+\psi),
 \qquad
 1+\psi=\frac{(1+\theta_1)(1+\delta_3)}{(1+\eta)(1+\theta_2)} .
\]
The numerator in this expression minus its denominator is
$\theta_1+\delta_3+\theta_1\delta_3-\eta-\theta_2-\eta\theta_2$.
Its absolute value is at most $5\eta_0+3\eta_0^2\le5.2\eta_0$,
using $u\le\gamma_n\le\eta_0$.
The denominator is at least $(15/16)(7/8)$. It follows that
\[
 |\psi|\le
 \frac{5.2\eta_0}{(15/16)(7/8)}
 <6.35\eta_0=:\psi_0\le0.4.
\]
Hence $a>0$.

There are two rounded operations in each component of the residual update.
Writing their relative errors as $\delta_i$ and $\delta_i'$ gives
\[
 r_i^+=\bigl(r_i-aq_i(1+\delta_i)\bigr)(1+\delta_i'),
 \qquad |\delta_i|,|\delta_i'|\le u.
\]
The update error $e=r^+-(r-aq)$ therefore satisfies
\[
 |e_i|\le u|r_i|+3u\,a|q_i|,
 \qquad
 \|e\|_2\le u\|r\|_2+3u\,a\|q\|_2.
\]
Substituting $q=Ar+\delta q$ and $a=\tilde a(1+\psi)$ into $r^+$
gives the residual identity in \eqref{eq:sd-perturbed-step}, with
\[
 f=-\psi\,\tilde aAr-(1+\psi)\,\tilde a\,\delta q+e .
\]

It remains to bound $f$ in the $A^{-1}$-norm. The Cauchy--Schwarz
inequality and the bounds on the extreme eigenvalues give
\begin{equation}
\begin{aligned}
 \langle r,r\rangle&\le\|r\|_A\|r\|_{A^{-1}},\\
 \|r\|_2&\le\lambda_{\max}^{1/2}\|r\|_{A^{-1}},
 \qquad
 \|r\|_2\le\lambda_{\min}^{-1/2}\|r\|_A.
\end{aligned}
 \label{eq:three-inequalities}
\end{equation}
Consequently,
\begin{equation}
\begin{aligned}
 \tilde a\,\|r\|_A
   &=\frac{\langle r,r\rangle}{\|r\|_A}
     \le\|r\|_{A^{-1}},\\
 \tilde a\,\|r\|_2
   &=\frac{\|r\|_2^3}{\|r\|_A^2}
     \le\frac{\|r\|_2\,\|r\|_{A^{-1}}}{\|r\|_A}
     \le\lambda_{\min}^{-1/2}\|r\|_{A^{-1}}.
\end{aligned}
 \label{eq:a-tilde-bounds}
\end{equation}
For the first two terms of $f$, these inequalities give
\[
\begin{aligned}
 \|\tilde aAr\|_{A^{-1}}
   &=\tilde a\|r\|_A\le\|r\|_{A^{-1}},\\
 \|\tilde a\,\delta q\|_{A^{-1}}
   &\le\lambda_{\min}^{-1/2}\tilde a\|\delta q\|_2
     \le\eta_0\|r\|_{A^{-1}}.
\end{aligned}
\]
For the update error, use
$\|e\|_{A^{-1}}\le\lambda_{\min}^{-1/2}\|e\|_2$.
Its two contributions satisfy
\[
\begin{aligned}
 \lambda_{\min}^{-1/2}u\|r\|_2
   &\le u\sqrt{\kappa_2(A)}\,\|r\|_{A^{-1}},\\
 \lambda_{\min}^{-1/2}3u\,a\|q\|_2
   &\le3u(1+\psi_0)(1+\eta_0)\kappa_2(A)\|r\|_{A^{-1}}\\
   &\le4.5u\kappa_2(A)\|r\|_{A^{-1}}.
\end{aligned}
\]
The second estimate uses
$a\|q\|_2\le(1+\psi_0)(1+\eta_0)\tilde a\|A\|_2\|r\|_2$.
Finally, $u\sqrt{\kappa_2(A)}\le u\kappa_2(A)\le\eta_0$, so adding
the four contributions yields
\[
 \|f\|_{A^{-1}}
 \le(6.35+1.4+1+4.5)\eta_0\|r\|_{A^{-1}}
 <14\eta_0\|r\|_{A^{-1}}.
\]
\end{proof}

\begin{theorem}[Convergence of the recursively updated residual of steepest descent]
\label{thm:sd-positive}
Let $A\in\mathbb F^{n\times n}$ be symmetric positive definite and suppose
\begin{equation}
 14\,c_A\,\gamma_n\,\kappa_2(A)\bigl(\kappa_2(A)+1\bigr)\le1 .
 \label{eq:sd-condition}
\end{equation}
Execute Algorithm~\ref{alg:sd} in the standard model with any fixed
accumulation order. Terminate if the stored residual is zero.
For each integer $k\ge0$ such that the computation through $r_{k+1}$
incurs neither underflow nor overflow, the stored residual satisfies
\begin{equation}
 \|r_{k+1}\|_{A^{-1}}\le\Bigl(1-\frac1{\kappa_2(A)+1}\Bigr)\|r_k\|_{A^{-1}},
 \qquad
 \|r_k\|_2\le\sqrt{\kappa_2(A)}\Bigl(1-\frac1{\kappa_2(A)+1}\Bigr)^k\|r_0\|_2 .
 \label{eq:sd-geometric}
\end{equation}
Thus the stored residual decreases geometrically for as long as the
relative-error model applies.  In an unbounded-exponent model it falls
below every positive threshold in finitely many steps.  After exact
termination, set subsequent residuals to zero.
\end{theorem}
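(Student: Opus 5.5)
The plan is to combine Lemma~\ref{lem:sd-one-step} with the exact steepest-descent contraction in the $A^{-1}$-norm, and then convert the contraction to the Euclidean norm.

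\emph{Applicability of the lemma.} Since $\kappa_2(A)+1\ge2$, condition \eqref{eq:sd-condition} gives
\[
14c_A\gamma_n\kappa_2(A)\le\frac{1}{\kappa_2(A)+1}\le\frac12 .
\]
This alone does not give $c_A\gamma_n\kappa_2(A)\le1/16$. We need $14(\kappa_2(A)+1)\ge16$, which holds whenever $\kappa_2(A)\ge1/7$. That is always true, because $\kappa_2(A)\ge1$. Hence $c_A\gamma_n\kappa_2(A)\le1/(14\cdot2)<1/16$, and the lemma applies at every step with $r_k\ne0$. The step in Algorithm~\ref{alg:sd} forms $r_{k+1}=\fl(r_k-a_kq_k)$. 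Under the rounding convention of Section~\ref{sec:arithmetic-model}, products are rounded before subtraction, so this is exactly the componentwise update of the lemma. The lemma then yields
\[
r_{k+1}=(I-\tilde a(r_k)A)r_k+f_k,
\qquad
\|f_k\|_{A^{-1}}\le\frac{\|r_k\|_{A^{-1}}}{\kappa_2(A)+1}.
\]
The last inequality uses \eqref{eq:sd-condition} directly.

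\emph{Exact contraction.} Next I bound the exact step. Write $r=A^{1/2}s$. Then $\|r\|_{A^{-1}}=\|s\|_2$, and the map $r\mapsto(I-\tilde aA)r$ corresponds to $s\mapsto(I-\tilde aA)s$ with $\tilde a=s^TAs/s^TA^2s$. The classical Kantorovich estimate gives
\[
\|(I-\tilde aA)r\|_{A^{-1}}\le\frac{\kappa-1}{\kappa+1}\|r\|_{A^{-1}},
\qquad \kappa=\kappa_2(A).
\]
To see this, note that $\tilde a$ minimizes $\|(I-tA)r\|_{A^{-1}}^2=r^TA^{-1}r-2t\,r^Tr+t^2r^TAr$ over $t$. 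So the exact step is bounded by the choice $t=2/(\lambda_{\min}+\lambda_{\max})$, whose operator norm is $(\kappa-1)/(\kappa+1)$. Adding the perturbation gives
\[
\|r_{k+1}\|_{A^{-1}}\le\frac{\kappa-1}{\kappa+1}\|r_k\|_{A^{-1}}+\frac{1}{\kappa+1}\|r_k\|_{A^{-1}}=\frac{\kappa}{\kappa+1}\|r_k\|_{A^{-1}},
\]
which is the first inequality in \eqref{eq:sd-geometric}. If $r_k=0$ the iteration terminates, and the convention of setting later residuals to zero makes the bound trivial.

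\emph{Euclidean bound and conclusion.} Iterating the contraction and using
\[
\lambda_{\max}^{-1/2}\|r\|_2\le\|r\|_{A^{-1}}\le\lambda_{\min}^{-1/2}\|r\|_2
\]
gives $\|r_k\|_2\le\sqrt{\kappa}\,(\kappa/(\kappa+1))^k\|r_0\|_2$. In an unbounded-exponent model the relative-error model holds at every step, so this quantity falls below any positive threshold in finitely many steps.

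The main obstacle is making sure the optimal-step contraction and the perturbation size combine to give exactly the constant $1-1/(\kappa+1)$. It is also important to check that Lemma~\ref{lem:sd-one-step}'s hypothesis follows from \eqref{eq:sd-condition}, rather than from a separate assumption.
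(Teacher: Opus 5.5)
Your proof is correct and follows the paper's argument: deduce $c_A\gamma_n\kappa_2(A)\le1/28$ from \eqref{eq:sd-condition} so that Lemma~\ref{lem:sd-one-step} applies, add its perturbation bound to the exact $(\kappa-1)/(\kappa+1)$ contraction in the $A^{-1}$-norm, then iterate and use the norm equivalences to get the Euclidean bound. The one minor difference is that you derive the exact contraction yourself, by noting that the line-search step $\tilde a$ minimizes $\|(I-tA)r\|_{A^{-1}}$ and comparing with $t=2/(\lambda_{\min}+\lambda_{\max})$, whereas the paper cites the Kantorovich inequality; your version makes the argument self-contained.
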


\begin{proof}
Condition \eqref{eq:sd-condition} implies
$c_A\gamma_n\kappa_2(A)\le1/28$, so
Lemma~\ref{lem:sd-one-step} applies whenever $r_k\ne0$ and the arithmetic
assumptions hold. The exact steepest-descent contraction follows from the
Kantorovich inequality \cite[Sec.~5.3]{Saad2003},
\[
 \|(I-\tilde a(r_k)A)r_k\|_{A^{-1}}
 \le\frac{\kappa_2(A)-1}{\kappa_2(A)+1}\|r_k\|_{A^{-1}}.
\]
Adding the rounding-error bound of Lemma~\ref{lem:sd-one-step} gives
\[
\begin{aligned}
 \|r_{k+1}\|_{A^{-1}}
 &\le\left(\frac{\kappa_2(A)-1}{\kappa_2(A)+1}
           +14c_A\gamma_n\kappa_2(A)\right)\|r_k\|_{A^{-1}}\\
 &\le\left(\frac{\kappa_2(A)-1}{\kappa_2(A)+1}
           +\frac1{\kappa_2(A)+1}\right)\|r_k\|_{A^{-1}}.
\end{aligned}
\]
This is the first inequality in \eqref{eq:sd-geometric}.
Iterating it and using
$\|r\|_2\le\lambda_{\max}^{1/2}\|r\|_{A^{-1}}$ and
$\|r\|_{A^{-1}}\le\lambda_{\min}^{-1/2}\|r\|_2$
gives the second inequality.
\end{proof}

\begin{remark}[Diagonal matrices]
\label{rem:sd-diagonal}
For diagonal $A$, the sufficient condition improves to
\[
 7\,\gamma_{n+1}\bigl(\kappa_2(A)+1\bigr)\le1 .
\]
Here the dependence on $\kappa_2(A)$ is linear.  To verify the constant,
put $g=\gamma_{n+1}$.  The condition gives $g\le1/14$.  All terms in the
denominator are nonnegative, so its combined relative error is at most
$g$.  The numerator followed by the final division has the same bound.
Writing $a=\tilde a(1+\psi)$ therefore gives
\[
 |\psi|\le\frac{2g}{1-g},\qquad
 |1+\psi|\le\frac{1+g}{1-g}.
\]
For positive diagonal $A$,
$\|\delta q\|_{A^{-1}}\le u\|r\|_A$ and
$\|q\|_{A^{-1}}\le(1+u)\|r\|_A$.
The two rounded operations in each residual component also give
\[
 \|e\|_{A^{-1}}\le u\|r\|_{A^{-1}}
       +(2u+u^2)a\|q\|_{A^{-1}}.
\]
Using $\tilde a\|r\|_A\le\|r\|_{A^{-1}}$ and $u\le g$, the defect in
\eqref{eq:sd-perturbed-step}, divided by $g\|r\|_{A^{-1}}$, is at most
\[
 \frac{2}{1-g}+\frac{1+g}{1-g}+1+
 \frac{(2+g)(1+g)^2}{1-g}<7\qquad(0\le g\le1/14).
\]
The expression increases with $g$ and is below $6.869$ at $g=1/14$,
so the contraction proof applies with $7g$ in place of
$14c_A\gamma_n\kappa_2(A)$.  For a dense matrix, the product error is bounded
by $\gamma_n|A||r|$, which introduces the additional condition-number
factor in \eqref{eq:sd-condition}.
\end{remark}

\begin{figure}[t]
\centering
\includegraphics[width=\textwidth]{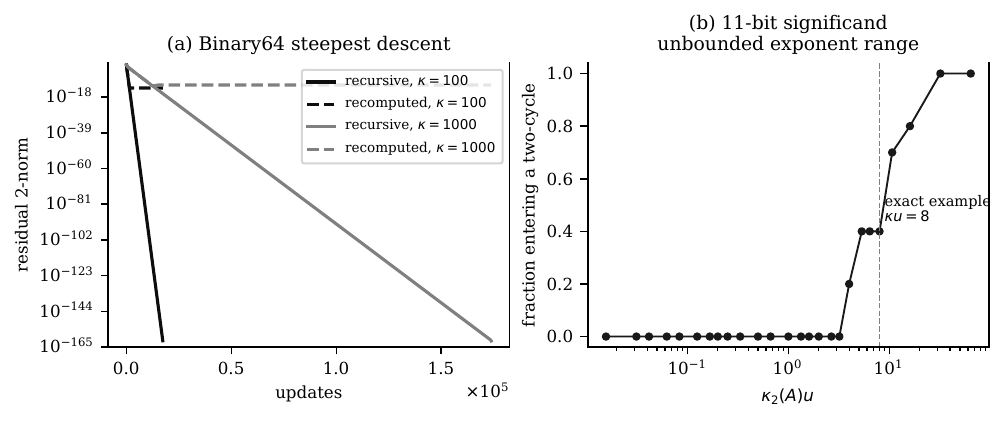}
\caption{(a) Recursively updated and true residuals for steepest descent
on dense SPD matrices with $n=100$ and logarithmically spaced eigenvalues,
computed in binary64.  The runs stop when the squared norm used by the
code underflows.  The residual components are still nonzero.
(b) The fraction of ten starting residuals entering a two-cycle for
$A=\operatorname{diag}(cu,1)$, computed with an eleven-bit significand and
unbounded exponents.  Here $c>0$ is the scanned parameter.
The construction of Theorem~\ref{thm:sd-cycle} has $\kappa_2(A)u=8$.}
\label{fig:sd}
\end{figure}

Figure~\ref{fig:sd} compares these effects.  For the dense binary64
examples with $\kappa_2(A)\in\{10^2,10^3\}$, condition
\eqref{eq:sd-condition} holds and the stored residual decreases long
after the true residual has reached its attainable-accuracy level.
The squared norm eventually underflows while the residual components
remain nonzero.  In the eleven-bit diagonal scan, the first observed
two-cycles occur at $\kappa_2(A)u=4$.  Four of ten starts enter a cycle
at $\kappa_2(A)u=8$, and all ten do so at the sampled values at least
$32$.  These observations illustrate the dependence on conditioning
and the initial direction alongside the sufficient convergence theorem.

\subsection{Scaling and recursively updated CG residuals}
\label{sec:cg-residual}

For CG, the recurrence has an exact scaling property in binary arithmetic.
It separates the scale of a stored residual from the evolution of its
direction.

\begin{proposition}[Scale invariance of the residual recurrence]
\label{prop:scale-invariance}
Consider two executions of \eqref{eq:hscg} with the same matrix and
evaluation order in binary round-to-nearest arithmetic.  At an index
$j$, let their stored residual and search-direction pairs be
$(r_j,p_j)$ and $(2^t r_j,2^t p_j)$ for an integer $t$.
Assume that the required scalings are representable, all subsequent
operations in both executions avoid underflow and overflow, and the
divisors are nonzero.  Then their subsequent residuals and search
directions differ by exactly the factor $2^t$, while their coefficients
$a_i^{\rm cg}$ and $b_{i+1}^{\rm cg}$ agree.  The same statement holds
for the residual and step size in Algorithm~\ref{alg:sd}.
\end{proposition}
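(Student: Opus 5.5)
The argument is an induction on the step index, resting on one elementary fact about binary round-to-nearest arithmetic. If $x$ and $y$ are real numbers whose results lie in the normal range, then $\fl(2^t x)=2^t\fl(x)$. The reason is that multiplication by $2^t$ maps $\mathbb F$ onto itself, preserves midpoints, and preserves the parity of the last significand bit, so ties-to-even is respected. The hypothesis that all operations avoid underflow and overflow is exactly what makes this valid. I will apply the fact operation by operation, in the fixed evaluation order of \eqref{eq:hscg}, through the following scaling laws:
\[
\fl(2^s x\circ 2^{s'}y)=2^{s+s'}\fl(x\circ y)\ \ (\circ\in\{\times\}),\qquad
\fl(2^s x\pm 2^s y)=2^s\fl(x\pm y),\qquad
\fl(2^s x/2^{s'}y)=2^{s-s'}\fl(x/y).
\]
Each follows because the exact result scales by a power of two before rounding.

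\textbf{Step 1 (products and inner products).} Suppose the second execution holds $(2^tr_j,2^tp_j)$. Since $A$ is the same in both runs, each partial sum in $\fl(Ap_j)$ is scaled by exactly $2^t$, by induction over the accumulation order. Hence $q_j'=2^tq_j$. In the same way, every partial sum of $\fl(\langle r_j',r_j'\rangle)$ and of $\fl(\langle q_j',p_j'\rangle)$ is scaled by $2^{2t}$. The product terms are scaled by $2^{2t}$ and the additions preserve the common scale.

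\textbf{Step 2 (coefficients and updates).} The quotient defining $a_j^{\rm cg}$ is a ratio of two quantities each scaled by $2^{2t}$, so $a_j^{\rm cg}$ agrees in both runs. Then $\fl(a_j^{\rm cg}q_j')=2^t\fl(a_j^{\rm cg}q_j)$ componentwise, and the subtraction gives $r_{j+1}'=2^tr_{j+1}$. For the iterate, $x_{j+1}$ is updated by the same coefficient applied to the scaled direction. The proposition only concerns residuals, directions, and coefficients, and the relation between the $x$ runs plays no role. Next, $b_{j+1}^{\rm cg}$ is again a ratio of two $2^{2t}$-scaled inner products, so it agrees in both runs. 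Finally, $p_{j+1}'=\fl(2^tr_{j+1}+\fl(b_{j+1}^{\rm cg}2^tp_j))=2^tp_{j+1}$.

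\textbf{Step 3 (induction and steepest descent).} The pair relation now holds at index $j+1$, and induction gives it at every later step. The zero tests $r_i=0$ agree because scaling by $2^t$ is injective, so both runs terminate at the same index. For Algorithm~\ref{alg:sd} the same computation applies with $p_j$ replaced by $r_j$: $q_j=\fl(Ar_j)$ scales by $2^t$, $a_j$ is a ratio of $2^{2t}$-scaled inner products and so agrees, and $r_{j+1}$ scales by $2^t$.

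The main obstacle is justifying Step 1 rigorously, that is, showing rounding commutes with power-of-two scaling at every intermediate result, including exact zeros and the ties-to-even case. For this I would use the representation $x=M\cdot2^{e}$ with integer significand $M$. Scaling by $2^t$ changes only the exponent $e$, provided it stays in range, which the no-underflow/overflow hypothesis guarantees. Consequently both the nearest-point selection and the even-significand tie rule are unchanged by the scaling.
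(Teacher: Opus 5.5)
Your proof is correct and takes the same route as the paper. Under the range hypotheses, power-of-two scaling commutes with round-to-nearest, so $q_j$ and the vector updates scale by $2^t$, both inner products in each coefficient scale by $2^{2t}$ so the quotients agree, and induction (with $p_j=r_j$ for steepest descent) gives the result; your significand-level justification of the tie rule and your remark on the zero tests only make explicit what the paper states in one line.
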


\begin{proof}
Multiplication by a power of two commutes with rounding under the range
assumptions.  Thus $q_j$ scales by $2^t$ and both stored inner products
in $a_j^{\rm cg}$ scale by $2^{2t}$.  Their quotient is unchanged.
The residual update scales by $2^t$, including its separately rounded
products and subtractions.  Both squared norms in $b_{j+1}^{\rm cg}$
scale by $2^{2t}$, so this coefficient is also unchanged, and the search
direction update scales by $2^t$.  Induction proves the assertion.
Setting the search direction equal to the residual gives the same
argument for steepest descent.
\end{proof}

The stored iterate is updated from $(r_j,p_j)$, while their recurrence
depends only on the matrix, the current pair, and the arithmetic.
Dyadic scaling therefore preserves the directional evolution as long as
the range assumptions hold.  This explains why unit roundoff is a
relative scale, rather than an absolute threshold for the norm of the
updated residual.

Wo\'zniakowski analyzed rounding in a class of CG recurrences
\cite{Wozniakowski1980}.  Strako\v{s} and Tich\'y developed local
orthogonality and energy-error estimates for CG and preconditioned CG
\cite{StrakosTichy2002,StrakosTichy2005}, with a fuller account given by
Meurant and Tich\'y \cite{MeurantTichy2024}.  Their bounds
involve the norms of the current vectors and ratios of consecutive
residual norms.  Greenbaum's nearby-problem model then relates perturbed
Lanczos recurrences to exact recurrences on enlarged symmetric matrices
\cite{Greenbaum1989}.  Together, these analyses identify the quantities
that a long-run CG convergence argument must control.  In
Section~\ref{sec:envelope} we state a common spectral bound for the
enlarged models and make the transfer to the computed residuals explicit.

The residual gap also depends on the organization of the computation.
Carson and Demmel analyze attainable accuracy in $s$-step Krylov
methods \cite{CarsonDemmel2014}, while Cools and coauthors study
residual replacement in pipelined CG \cite{CoolsEtAl2018}.
Carson, Rozlo\v{z}n\'ik, Strako\v{s}, Tich\'y, and T\r{u}ma compare
the numerical behavior of communication-reducing CG variants
\cite{CarsonRozloznikStrakosTichyTuma2018}.
\section{CG and Lanczos computations in finite precision}

In exact arithmetic, CG produces the Galerkin iterates generated by
symmetric Lanczos.  Their floating-point recurrences store different
quantities.  CG updates $x_j$, $r_j$, and $p_j$, while direct
Lanczos--Galerkin forms and solves a projected tridiagonal system.
The following construction follows both computations through their
individual rounded assignments.

\begin{theorem}[Different outcomes for Hestenes--Stiefel CG and a direct Lanczos--Galerkin computation]
\label{thm:cg-lanczos-separation}
Fix a binary floating-point format with significand precision $p\ge2$,
rounding to nearest with ties to even, and unit roundoff $u=2^{-p}$.  Define
\[
 s=\left\lceil\frac{p+2}{2}\right\rceil,
 \quad H=2^s,
 \quad h=H^{-1},
 \quad \epsilon_A=\frac{h^2u}{4}.
\]
Assume that every nonzero exact intermediate result in the four CG iterations
defined by \eqref{eq:hscg} and the first two Lanczos steps defined by
\eqref{eq:scalar-lanczos-order} is normal and finite.  Consider the symmetric
positive definite system
\[
 A=\diag(\epsilon_A,1),\qquad b=(1,H)^T,\qquad x_0=0.
\]
Initialize \eqref{eq:hscg} with $r_0=b$ and $p_0=r_0$.  Execute four CG
iterations, taking all four updates before testing convergence, and denote the stored iterates by
$x_j^{\rm cg}$.  Then
\[
 x_4^{\rm cg}=(\epsilon_A^{-1},H)^T=A^{-1}b.
\]

Next, apply \eqref{eq:scalar-lanczos-order} for two steps, and form
$T_2^{\rm lan}$ from the stored recurrence coefficients.  The stored initial
coefficient and tridiagonal matrix are
\[
 \beta_1^{\rm lan}=H,
 \qquad
 T_2^{\rm lan}=\begin{bmatrix}1&h\\h&h^2\end{bmatrix}.
\]
The stored matrix is singular, and the projected Galerkin system
\[
 T_2^{\rm lan}y=\beta_1^{\rm lan}e_1=He_1
\]
is inconsistent.  Thus CG stores the exact solution after four updates,
while the direct projected equation at step two is singular and inconsistent.
\end{theorem}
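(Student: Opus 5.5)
The proof is a direct trace of the rounded assignments in \eqref{eq:hscg} and \eqref{eq:scalar-lanczos-order}, in the spirit of the proof of Theorem~\ref{thm:sd-cycle}. Everything rests on a few absorption facts, which I would establish first. Since $2s\ge p+2$, we have $h^2=2^{-2s}\le u/4$. Rounding to nearest with ties to even then gives $\fl(1+h^2)=1$ and $\fl(1-\tau)=1$ for $0<\tau<u/2$.

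At $H^2=2^{2s}$ the spacing of $\mathbb F$ is at least $2^{2s-p+1}\ge8$. Hence $\fl(H^2+1)=H^2$, and likewise $\fl(H^2+c)=H^2$ for $0\le c<4$.

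Finally, $\epsilon_A=h^2u/4$ is below half an ulp of $h^2$, so $\fl(h^2\pm\epsilon_A)=h^2$ and $\fl(h\pm\epsilon_A h)=\pm h$ up to the sign conventions. All multiplications by $H$, $h$, or powers of two are exact by the normality assumption, exactly as in Proposition~\ref{prop:scale-invariance}.

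\textbf{CG trace.} With these facts I would follow \eqref{eq:hscg} step by step.
\begin{itemize}
\item From $r_0=p_0=(1,H)^T$ and $q_0=(\epsilon_A,H)^T$, both inner products round to $H^2$. This gives $a_0^{\rm cg}=1$, $x_1=(1,H)^T$, and $r_1=(\fl(1-\epsilon_A),0)^T=(1,0)^T$.
\item Next $b_1^{\rm cg}=h^2$ and $p_1=(\fl(1+h^2),h)^T=(1,h)^T$. Then $q_1=(\epsilon_A,h)^T$ and $\fl(\langle q_1,p_1\rangle)=h^2$, so $a_1^{\rm cg}=H^2$.
\item This yields $x_2=(\fl(1+H^2),2H)^T=(H^2,2H)^T$ and $r_2=(\fl(1-u/4),-H)^T=(1,-H)^T$.
\end{itemize}
I would continue in the same way for steps three and four. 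At each step I record the stored $r_j$, $p_j$, and $x_j$, and which small term is absorbed. The aim is to show that the first component of $x$ accumulates exactly to $\epsilon_A^{-1}=4H^2/u=2^{2s+p+2}$, while the second returns exactly to $H$.

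This bookkeeping is the main obstacle. The large $a_j^{\rm cg}$ steps must land on representable values, and one has to check carefully which additions are exact (sums of powers of two at compatible scales) and which are absorbed. At any step where a tie-to-even could occur, I would verify the parity explicitly.

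\textbf{Lanczos trace.} The two Lanczos steps are short, and I would carry them out completely.
\begin{itemize}
\item $\rho_0=\fl(1+H^2)=H^2$, so $\beta_1^{\rm lan}=H$ and $v_1=(h,1)^T$.
\item $q_1=w_1=(\epsilon_A h,1)^T$ and $\alpha_1^{\rm lan}=\fl(\epsilon_A h^2+1)=1$. Then $z_1=(\fl(\epsilon_A h-h),0)^T=(-h,0)^T$, which gives $\beta_2^{\rm lan}=h$ and $v_2=(-1,0)^T$.
\item $q_2=(-\epsilon_A,0)^T$ and $w_2=(\fl(-\epsilon_A-h^2),-h)^T=(-h^2,-h)^T$, so $\alpha_2^{\rm lan}=h^2$.
\end{itemize}
This produces the stated $T_2^{\rm lan}$.

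Its determinant is $h^2-h^2=0$, and its second row is $h$ times the first. Consistency of $T_2^{\rm lan}y=He_1$ would therefore force $0=hH=1$, which is impossible. So the projected system is inconsistent.
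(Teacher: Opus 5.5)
Your approach is the paper's own, a direct trace of the rounded assignments. Your Lanczos trace, the rank argument for inconsistency, and your first two CG steps all agree with the paper's table entry for entry. The gap is that you stop at $x_2$ and $r_2$. The CG half of the theorem is exactly the claim about $x_4^{\rm cg}$, so ``I would continue in the same way'' leaves it unproved. One rounding fact you need there is also missing from your list.

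The missing steps are short but not automatic. From $\fl(\langle r_2,r_2\rangle)=\fl(1+H^2)=H^2$ you get $b_2^{\rm cg}=H^2$ and $p_2=(\fl(1+H^2),\fl(-H+H^2h))^T=(H^2,0)^T$. The exact cancellation in the second component is the key event. It gives $q_2=(u/4,0)^T$, $\fl(\langle q_2,p_2\rangle)=uH^2/4$, and $a_2^{\rm cg}=4/u$. The first component of the iterate therefore jumps in a single step, giving $x_3=(\fl(H^2+4H^2/u),2H)^T=(\epsilon_A^{-1},2H)^T$ and $r_3=(\fl(1-1),-H)^T=(0,-H)^T$ exactly. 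Then $b_3^{\rm cg}=1$, $p_3=(H^2,-H)^T$, $q_3=(u/4,-H)^T$, and $\fl(\langle q_3,p_3\rangle)=\fl(uH^2/4+H^2)=H^2$. This last equality is your $c<4$ fact, since $uH^2/4\in\{1,2\}$. Hence $a_3^{\rm cg}=1$ and $x_4=(\fl(\epsilon_A^{-1}+H^2),\fl(2H-H))^T=(\epsilon_A^{-1},H)^T$.

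You must add $\fl(\epsilon_A^{-1}+H^2)=\epsilon_A^{-1}$ to your absorption facts; it is used at both of the last two updates. It holds because $\epsilon_A^{-1}=2^{2s+p+2}$ has successor spacing $2^{2s+3}=8H^2$, so $H^2$ is below half an ulp. Every absorption in the trace lies strictly inside half an ulp, so no tie-to-even case arises and no parity check is needed. With these steps written out, your argument coincides with the paper's proof.
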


The normality hypothesis holds for binary32, binary64, and binary128.
For $p=11$, the value $\epsilon_A=2^{-27}$ requires an exponent range
larger than that of binary16.

\begin{proof}
The relation $h^2<u/2$ gives the strict rounding identities
\[
 \fl(H^2+1)=H^2,
 \quad \fl(1+h^2)=1,
 \quad \fl(h+h^3)=h,
 \quad \fl(h^2+\epsilon_A)=h^2,
 \quad \fl(1+\epsilon_AH^2)=1.
\]
Every nonzero stored vector entry and coefficient is a signed power of
two.  Substitution into the specified assignments gives the following
trajectory.

For CG, the stored states are
\[
\begin{array}{c|c|c|c}
j&x_j&r_j&p_j\\ \hline
0&(0,0)&(1,H)&(1,H)\\
1&(1,H)&(1,0)&(1,h)\\
2&(H^2,2H)&(1,-H)&(H^2,0)\\
3&(1/\epsilon_A,2H)&(0,-H)&(H^2,-H)\\
4&(1/\epsilon_A,H)&(-u/4,0)&\text{unused}.
\end{array}
\]
At the four steps, the stored coefficients $a_j^{\rm cg}$ are respectively
$1,H^2,4/u,1$, while
$b_1^{\rm cg},b_2^{\rm cg},b_3^{\rm cg}$ are $h^2,H^2,1$.
Substitution in \eqref{eq:hscg}, using the rounding relations above,
verifies every row.  The final true residual is zero even though the recursive
residual is $(-u/4,0)^T$.

For Lanczos, the rounded squared norm is $H^2$, so the initial
normalization gives $H$ and $v_1=(h,1)^T$.  The first step yields
\[
 \alpha_1^{\rm lan}=1,
 \qquad \beta_2^{\rm lan}=h,
 \qquad v_2=(-1,0)^T.
\]
In the second step, the first component of $w_2$ is
$\fl(-\epsilon_A-h^2)=-h^2$.  Consequently the next diagonal coefficient is
\(
 \alpha_2^{\rm lan}=\fl(h^2+\epsilon_A)=h^2.
\)
The determinant of the stored matrix is zero.  Its range consists of
vectors $z$ satisfying $z_2=hz_1$.  Since $hH=1$, the right-hand side
$(H,0)^T$ lies outside that range.
\end{proof}

For $\nu\in\{1,2\}$, define a $\nu$-pass modified Gram--Schmidt variant as
follows.  After $z_1$ has been formed in
\eqref{eq:scalar-lanczos-order}, set $z_1^{(0)}=z_1$ and compute
\[
 c_\ell=\fl\!\left(v_1^Tz_1^{(\ell-1)}\right),
 \qquad
 z_1^{(\ell)}=
 \fl\!\left(z_1^{(\ell-1)}-c_\ell v_1\right),
 \qquad 1\le\ell\le\nu.
\]
Normalize $z_1^{(\nu)}$ in the target format to obtain $v_2^{(\nu)}$.
Recompute
\[
 t_{12}^{(\nu)}=\fl\!\left(v_1^T\fl(Av_2^{(\nu)})\right),
 \qquad
 t_{22}^{(\nu)}=\fl\!\left((v_2^{(\nu)})^T
                      \fl(Av_2^{(\nu)})\right),
\]
using the stated left-to-right accumulation order, and set
\[
 T_2^{(\nu)}=
 \begin{bmatrix}
  \alpha_1^{\rm lan}&t_{12}^{(\nu)}\\
  t_{12}^{(\nu)}&t_{22}^{(\nu)}
 \end{bmatrix}.
\]

\begin{corollary}[One or two reorthogonalization passes]
\label{cor:cg-lanczos-reorthogonalization}
Under the hypotheses of Theorem~\ref{thm:cg-lanczos-separation}, the
reorthogonalized computations defined above satisfy
\[
 v_2^{(\nu)}=(-1,h)^T,
 \qquad
 T_2^{(\nu)}=\begin{bmatrix}1&h\\h&h^2\end{bmatrix},
 \qquad \nu\in\{1,2\}.
\]
Consequently, $T_2^{(\nu)}y=He_1$ has no solution for either value of $\nu$.
\end{corollary}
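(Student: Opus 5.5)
The plan is to replay, assignment by assignment, the first Lanczos step from the proof of Theorem~\ref{thm:cg-lanczos-separation}, and then to run the extra passes. I reuse the rounding identities established there, all of which rest on $h^2<u/2$. One extra fact is needed: since $h$ is a power of two, its predecessor is $h(1-u)$, so any perturbation of magnitude below $hu/2$ rounds back to $h$. This gives
\[
\fl(h-h^3)=h \qquad\text{and}\qquad \fl(h-\epsilon_A h)=h.
\]
Every stored quantity will again be a signed power of two or a sum that rounds to one. Hence each product in the updates is exact, and only the additions need checking.

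\emph{Step 1 (recover $z_1$).} From the theorem, $v_1=(h,1)^T$ and $q_1=Av_1=(\epsilon_A h,1)^T$, with $w_1=q_1$ and $\alpha_1^{\rm lan}=\fl(\epsilon_A h^2+1)=1$. The update $z_1=\fl(w_1-v_1)$ has first component $\fl(\epsilon_A h-h)=-h$ and second component $0$. So $z_1=(-h,0)^T$, consistent with $\beta_2^{\rm lan}=h$ and $v_2=(-1,0)^T$ in the theorem.

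\emph{Step 2 (first pass).} The inner product is $c_1=\fl(h\cdot(-h)+1\cdot 0)=-h^2$. The update $z_1^{(1)}=\fl(z_1+h^2v_1)$ has first component $\fl(-h+h^3)=-h$, by sign symmetry and the fact above, and second component $\fl(0+h^2)=h^2$. Thus $z_1^{(1)}=(-h,h^2)^T$.

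\emph{Step 3 (second pass).} The inner product is $c_2=\fl(-h^2+h^2)=0$, computed exactly. Hence $z_1^{(2)}=z_1^{(1)}$, and both values of $\nu$ lead to the same vector.

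\emph{Step 4 (normalization).} The squared norm is $\fl(h^2+h^4)=h^2\fl(1+h^2)=h^2$. Its square root is $h$, so $v_2^{(\nu)}=(-1,h)^T$.

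\emph{Step 5 (recomputed entries).} The product is $\fl(Av_2^{(\nu)})=(-\epsilon_A,h)^T$, exactly. Then
\[
t_{12}^{(\nu)}=\fl(-h\epsilon_A+h)=h, \qquad t_{22}^{(\nu)}=\fl(\epsilon_A+h^2)=h^2,
\]
the latter being one of the theorem's identities. So $T_2^{(\nu)}$ coincides with $T_2^{\rm lan}$. Singularity and inconsistency of $T_2^{(\nu)}y=He_1$ then follow verbatim from the end of the theorem's proof: the range of the matrix is $\{z_2=hz_1\}$, which excludes $(H,0)^T$ because $hH=1$.

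The main obstacle is bookkeeping rather than ideas. I must justify that the new intermediates ($h^3$, $h^4$, $\epsilon_A h$) are normal, which the corollary inherits only implicitly from the theorem's hypothesis. I would check this by comparing exponents. Because $2s\ge p+2$, we have $h^2\le u/4$, and hence
\[
h^4\ge h^2u/4=\epsilon_A .
\]
Also $h\ge u/4$, so $\epsilon_A h=h^3u/4$. With these comparisons, normality of $h^3$ and $h^4$ reduces to normality of $\epsilon_A$, which the hypothesis covers, and normality of $\epsilon_A h$ reduces to that of an intermediate already present in the theorem's Lanczos step ($q_1$). I would also re-verify each rounding boundary against the correct side of a power of two, where the spacing halves, since this is where an off-by-one error could slip in.
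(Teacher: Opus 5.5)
Your computation is correct and follows the same route as the paper's proof. The paper replays the steps the same way: first coefficient $-h^2$, corrected vector $(-h,h^2)^T$ via $\fl(h-h^3)=h$, stored norm $h$, zero second coefficient, and recomputed entries $h$ and $h^2$. You simply write out each rounded assignment explicitly.

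There is one slip, in your closing range check. From $h^2\le u/4$ you get $h^4\le h^2u/4=\epsilon_A$, not $h^4\ge\epsilon_A$. For odd $p$, which includes $p=11,53,113$, one has $h^4=\epsilon_A/2$. So normality of $h^4$ does not follow from normality of $\epsilon_A$ as you claim.

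The repair is already in your last line. The inequality $h\ge u/4$ gives $h^4\ge h^3u/4=\epsilon_A h$, and $\epsilon_A h$ is the first entry of $q_1$. Alternatively, $\epsilon_A h^2$ is the product formed in computing $\alpha_1^{\rm lan}$. Both are covered by the theorem's hypothesis.

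Every nonzero intermediate of the reorthogonalized computation has magnitude between $\epsilon_A h$ and $1$. The range conditions are therefore inherited from the theorem, a point the paper's proof leaves implicit.
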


\begin{proof}
The first projection coefficient is $-h^2$.  The corrected vector is
$(-h,h^2)^T$, whose stored norm is $h$.  Normalization therefore gives
$v_2^{(\nu)}=(-1,h)^T$.  The rounding identity $\fl(h-h^3)=h$ is used
in the corrected first component.  A second projection coefficient is
zero.  Recomputing the off-diagonal and diagonal entries gives $h$ and
$h^2$, respectively.
\end{proof}

\begin{figure}[t]
  \centering
  \resizebox{\textwidth}{!}{\begin{tikzpicture}[
  x=1mm,y=1mm,
  font=\sffamily\footnotesize,
  >=Latex,
  input/.style={draw=black, line width=0.8pt, rounded corners=1pt,
                fill=white, text width=36mm, minimum height=22mm,
                align=center, inner sep=1.5mm},
  stage/.style={draw=black, line width=0.8pt, rounded corners=1pt,
                fill=white, text width=36mm, minimum height=15mm,
                align=center, inner sep=1.5mm},
  outcome/.style={draw=black, line width=0.8pt, rounded corners=1pt,
                  fill=white, text width=41mm, minimum height=15mm,
                  align=center, inner sep=1.5mm},
  branchlabel/.style={font=\sffamily\bfseries\scriptsize, text=black},
  line/.style={line width=0.8pt, draw=black},
  arr/.style={-{Latex[length=2mm,width=1.25mm]}, line width=0.8pt, draw=black}
]

\node[input] (data) at (0,0)
  {$A=\operatorname{diag}(\epsilon_A,1)$, $b=(1,H)^T$\\target-format arithmetic};

\node[branchlabel, anchor=west] at (35,35) {CG RECURRENCE};
\node[stage] (cgstate) at (54,24)
  {Hestenes--Stiefel CG\\full vectors $x_j,r_j,p_j$};
\node[stage] (retain) at (103,24)
  {four stored updates\\listed in the proof};
\node[outcome] (solution) at (156,24)
  {fourth stored CG iterate\\$x_4=A^{-1}b$};

\node[branchlabel, anchor=west] at (35,-13) {DIRECT LANCZOS/GALERKIN};
\node[stage] (lstate) at (54,-24)
  {symmetric Lanczos--Galerkin\\basis and\\projected coefficients};
\node[stage] (compress) at (103,-24)
  {$\alpha_2^{\rm lan}=\fl(h^2+\epsilon_A)=h^2$};
\node[outcome] (singular) at (156,-24)
  {$T_2=\begin{bmatrix}1&h\\h&h^2\end{bmatrix}$, $\det T_2=0$\\$T_2y=He_1$ is inconsistent};

\coordinate (fork) at (27,0);
\draw[line] (data.east) -- (fork);
\draw[arr] (fork) |- (cgstate.west);
\draw[arr] (fork) |- (lstate.west);
\draw[arr] (cgstate.east) -- (retain.west);
\draw[arr] (retain.east) -- (solution.west);
\draw[arr] (lstate.east) -- (compress.west);
\draw[arr] (compress.east) -- (singular.west);

\end{tikzpicture}
}
  \caption{The two computations in
  Theorem~\ref{thm:cg-lanczos-separation}.  The right-hand side $He_1$ lies
  outside the range of the stored matrix $T_2^{\rm lan}$, making the
  projected system inconsistent.  CG stores the exact solution at step four.}
  \label{fig:cg-lanczos}
\end{figure}

The theorem gives an exact two-dimensional comparison for the stated
evaluation orders, at $\kappa_2(A)\ge16/u^2$.  A loose stopping tolerance
can accept the common first iterate.  Here the four
stored CG updates are compared with the direct projected equation.
Saad discusses breakdown in direct Lanczos and the LQ-based SYMMLQ
alternative \cite[pp.~197--198]{Saad2003}.

Finite-precision differences between equivalent Krylov recurrences have
been studied by Gutknecht and Strako\v{s}, Meurant and Strako\v{s}, and
Greenbaum, Liu, and Chen
\cite{GutknechtStrakos2000,MeurantStrakos2006,GreenbaumLiuChen2021}.
\v{S}imonov\'a and Tich\'y also give a structured example comparing
standard Lanczos with Hestenes--Stiefel CG \cite{SimonovaTichy2022}.
We next identify the local orthogonality term in the present construction,
then compare the two methods through a common spectral enclosure.
\subsection{Paige's local-orthogonality relation}
\label{sec:mechanism-216}

The computed vectors in Theorem~\ref{thm:cg-lanczos-separation} make
the effect of local orthogonality particularly clear.  A small weighted
inner product changes the second diagonal coefficient by more than the
smallest eigenvalue of the original matrix.

\begin{proposition}[Loss of orthogonality in Theorem~\ref{thm:cg-lanczos-separation}]
\label{prop:paige-step-one}
In Theorem~\ref{thm:cg-lanczos-separation}, the stored Lanczos vectors
$v_1=(h,1)^T$ and $v_2=(-1,0)^T$ satisfy
\[
 v_1^Tv_2=-h,\qquad
 \beta_2^{\rm lan}v_1^Tv_2=-h^2\in\{-u/4,-u/8\}.
\]
The computed second diagonal coefficient is
\[
 \alpha_2^{\rm lan}
 =\fl\bigl(v_2^TAv_2-\beta_2^{\rm lan}v_2^Tv_1\bigr)
 =\fl(\epsilon_A+h^2)=h^2.
\]
The exact Galerkin matrix of this stored basis is
\[
 V_2^TAV_2=
 \begin{bmatrix}1+\epsilon_Ah^2&-\epsilon_Ah\\
                 -\epsilon_Ah&\epsilon_A\end{bmatrix}.
\]
It is positive definite with determinant $\epsilon_A$.  The local
orthogonality term $-\beta_2^{\rm lan}v_2^Tv_1=h^2$ dominates
$\epsilon_A=h^2u/4$ in the formation of the singular stored matrix
$T_2^{\rm lan}$.
\end{proposition}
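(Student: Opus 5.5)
The plan is to take the stored vectors and coefficients from the proof of Theorem~\ref{thm:cg-lanczos-separation}, namely $v_1=(h,1)^T$, $v_2=(-1,0)^T$, $\beta_2^{\rm lan}=h$ and $\alpha_2^{\rm lan}=h^2$, and check each claim by direct computation. First, $v_1^Tv_2=-h$ is immediate. Multiplying by $\beta_2^{\rm lan}=h$ gives $-h^2=-2^{-2s}$. Since $s=\lceil (p+2)/2\rceil$, we have $2s=p+2$ when $p$ is even and $2s=p+3$ when $p$ is odd. Hence $h^2=u/4$ or $h^2=u/8$, which gives the stated two-element set.

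Second, I would re-express the second Lanczos step in the form shown. The recurrence \eqref{eq:scalar-lanczos-order} computes $w_2=\fl(Av_2-\beta_2^{\rm lan}v_1)$. Here $Av_2=(-\epsilon_A,0)^T$ is exact, and $\beta_2^{\rm lan}v_1=(h^2,h)^T$ is exact because it is a product of powers of two. So $w_2=(\fl(-\epsilon_A-h^2),-h)^T=(-h^2,-h)^T$, using the rounding identity $\fl(h^2+\epsilon_A)=h^2$ from the theorem's proof. Then $\alpha_2^{\rm lan}=\fl(v_2^Tw_2)$, and left-to-right accumulation gives $\fl(h^2+0)=h^2$. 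Algebraically, $v_2^Tw_2$ before rounding equals $v_2^TAv_2-\beta_2^{\rm lan}v_2^Tv_1=\epsilon_A+h^2$. I would state carefully that the single relevant rounding is the one in the first component of $w_2$; the representation $\fl(\epsilon_A+h^2)$ is exact in value even though the operations are organized differently. This bookkeeping of where the rounding occurs is the only delicate point.

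Third, I would compute the exact Galerkin matrix $V_2^TAV_2$ with $V_2=[v_1,v_2]$:
\begin{itemize}
\item $v_1^TAv_1=\epsilon_Ah^2+1$,
\item $v_1^TAv_2=h(-\epsilon_A)+0=-\epsilon_Ah$,
\item $v_2^TAv_2=\epsilon_A$.
\end{itemize}
Its determinant is $(1+\epsilon_Ah^2)\epsilon_A-\epsilon_A^2h^2=\epsilon_A$. The $(1,1)$ entry is positive and the determinant is positive, so the matrix is positive definite. Alternatively, $V_2$ is nonsingular since $\det V_2=1$, and $A$ is SPD.

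Finally, I would compare the two matrices. The stored $T_2^{\rm lan}$ has $(2,2)$ entry $h^2$, whereas the exact value is $\epsilon_A$. The difference is exactly the local orthogonality term $-\beta_2^{\rm lan}v_2^Tv_1=h^2$, which is $4/u$ times larger than $\epsilon_A$. It is therefore this term, not $v_2^TAv_2$, that makes $\alpha_2^{\rm lan}(\alpha_1^{\rm lan})-(\beta_2^{\rm lan})^2=h^2-h^2$ vanish. That is the singularity established in Theorem~\ref{thm:cg-lanczos-separation}.
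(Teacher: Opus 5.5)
Your proposal is correct and is essentially the paper's proof: you read off $v_1$, $v_2$, $\beta_2^{\rm lan}$ from the proof of Theorem~\ref{thm:cg-lanczos-separation}, use the parity of $p$ to get $h^2\in\{u/4,u/8\}$, multiply out $V_2^TAV_2$ and check its determinant and leading entry, and trace $\alpha_2^{\rm lan}$ to the single rounding $\fl(-\epsilon_A-h^2)=-h^2$ in $w_2$. One small slip in your last paragraph: the stored entry $h^2$ differs from the Galerkin entry $\epsilon_A$ by $h^2-\epsilon_A$, not by $h^2$; it is the unrounded expression $\epsilon_A+h^2$ that exceeds $\epsilon_A$ by exactly the local orthogonality term, and the rounding then discards $\epsilon_A$.
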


\begin{proof}
The values of $v_1$, $v_2$, $\beta_2^{\rm lan}$, and
$\alpha_2^{\rm lan}$ follow from the preceding proof.  Since
$s=\lceil(p+2)/2\rceil$, the value $h^2=2^{-2s}$ is $u/4$ for even $p$
and $u/8$ for odd $p$.  Direct multiplication by
$A=\diag(\epsilon_A,1)$ gives the displayed Galerkin matrix and
\[
 (1+\epsilon_Ah^2)\epsilon_A-\epsilon_A^2h^2=\epsilon_A>0.
\]
Its leading diagonal entry is positive, so it is positive definite.
In the rounded formation of $w_2$, the quantity
$-\epsilon_A-h^2$ rounds to $-h^2$, producing the stated coefficient.
\end{proof}

The weighted adjacent-vector product is at most $u/4$, consistent with
Paige's local relation, while the inner product itself has magnitude
$h$, of order $\sqrt u$, because its multiplying coefficient is small.
The zero Ritz value is therefore associated with a substantial residual
even though its distance to the smallest eigenvalue is tiny.
The following estimate quantifies this distinction.

\begin{proposition}[A residual bound for the computed Ritz values]
\label{prop:singular-T}
Let $A\in\R^{n\times n}$ be symmetric positive definite and suppose
\[
 AV_k=V_kT_k+\beta_{k+1}v_{k+1}e_k^T+F_k,
 \qquad T_k=T_k^T,\qquad \beta_{k+1}\ge0.
\]
For an eigenpair $(\theta,s)$ of $T_k$, assume $\|s\|_2=1$ and
$y=V_ks\ne0$.  Then
\begin{equation}
 \operatorname{dist}(\theta,\sigma(A))
 \le\frac{\|F_k\|_2+\beta_{k+1}|e_k^Ts|\|v_{k+1}\|_2}{\|y\|_2}.
 \label{eq:ritz-a-posteriori}
\end{equation}
If an orthonormal eigenbasis $s_1,\ldots,s_k$ of $T_k$ satisfies
$V_ks_i\ne0$ and
\[
 \lambda_{\min}(A)>
 \max_{1\le i\le k}
 \frac{\|F_k\|_2+\beta_{k+1}|e_k^Ts_i|\|v_{k+1}\|_2}
      {\|V_ks_i\|_2},
\]
then $T_k$ is positive definite.  Its Galerkin system therefore has a
unique solution.
\end{proposition}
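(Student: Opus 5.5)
The plan is to apply the recurrence to the Ritz vector $y=V_ks$ and read off a residual, then use the standard symmetric residual bound. Multiplying the perturbed relation on the right by $s$ and using $T_ks=\theta s$ gives
\[
 Ay-\theta y=AV_ks-V_kT_ks
 =\beta_{k+1}(e_k^Ts)\,v_{k+1}+F_ks .
\]
Since $\|s\|_2=1$, the right-hand side has norm at most $\|F_k\|_2+\beta_{k+1}|e_k^Ts|\,\|v_{k+1}\|_2$. For symmetric $A$ and any nonzero $y$, expand $y$ in an orthonormal eigenbasis of $A$. This gives $\|(A-\theta I)y\|_2\ge\min_{\lambda\in\sigma(A)}|\lambda-\theta|\,\|y\|_2$. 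Dividing by $\|y\|_2>0$ yields \eqref{eq:ritz-a-posteriori}. No orthonormality of $V_k$ is needed, which is why the bound is scaled by $\|y\|_2$ rather than assuming it equals one.

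For the second claim, I apply the first bound to each eigenpair $(\theta_i,s_i)$ of the orthonormal eigenbasis. The hypothesis says that the resulting bound $\rho_i$ satisfies $\rho_i<\lambda_{\min}(A)$. Then some eigenvalue $\lambda$ of $A$ satisfies $|\theta_i-\lambda|\le\rho_i$. Because $A$ is positive definite, every $\lambda\in\sigma(A)$ is at least $\lambda_{\min}(A)$, so
\[
 \theta_i\ge\lambda-\rho_i\ge\lambda_{\min}(A)-\rho_i>0 .
\]
All eigenvalues of the symmetric matrix $T_k$ are therefore positive, so $T_k$ is positive definite. In particular it is nonsingular, and $T_ky=\beta_1e_1$ has a unique solution.

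I expect no real obstacle. The only points needing care are small. First, I need $y\neq0$, which is assumed. Second, the $\min$ over $\sigma(A)$ in the residual bound is attained at some eigenvalue of $A$, and positive definiteness then bounds that eigenvalue below by $\lambda_{\min}(A)$. It also helps to note how this connects back to Proposition~\ref{prop:paige-step-one}: the zero Ritz value of $T_2^{\rm lan}$ is consistent with \eqref{eq:ritz-a-posteriori} only because the corresponding residual term is not small relative to $\|y\|_2$.
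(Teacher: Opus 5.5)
Your proposal is correct and matches the paper's proof step for step. You apply the perturbed relation to $y=V_ks$ to obtain $(A-\theta I)y=F_ks+\beta_{k+1}(e_k^Ts)v_{k+1}$, use the symmetry of $A$ to bound $\|(A-\theta I)y\|_2$ below by $\operatorname{dist}(\theta,\sigma(A))\|y\|_2$, and conclude that every eigenvalue of $T_k$ lies within less than $\lambda_{\min}(A)$ of the positive spectrum of $A$, hence is positive. One small clarification on your closing aside about Proposition~\ref{prop:paige-step-one}: the bound holds unconditionally, so the point there is that the order-$h$ residual is what makes the positivity criterion fail.
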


\begin{proof}
Applying $A-\theta I$ to the lifted vector gives
\[
 (A-\theta I)y=F_ks+\beta_{k+1}v_{k+1}e_k^Ts.
\]
Symmetry implies
$\|(A-\theta I)y\|_2\ge\operatorname{dist}(\theta,\sigma(A))\|y\|_2$.
The triangle inequality proves \eqref{eq:ritz-a-posteriori}.  Under the
last hypothesis, every eigenvalue of $T_k$ has distance less than
$\lambda_{\min}(A)$ from the positive spectrum of $A$, and is positive.
\end{proof}

In the two-dimensional construction, the eigenvector of $T_2^{\rm lan}$
for zero is proportional to $(h,-1)^T$.  Its lifted vector has norm of
order one, while its residual has a component of order $h$.
The bound is consistent with the much smaller value
$\lambda_{\min}(A)=\epsilon_A$.
Paige's separate containment analysis bounds computed Ritz values by the
extreme eigenvalues of $A$ and a local-error term depending on the step
count \cite{Paige1976,Paige1980}.  When that term is smaller
than $\lambda_{\min}(A)$, the computed tridiagonal remains positive
definite.  CG instead requires a positive stored curvature denominator
at each update.  Proposition~\ref{prop:cg-curvature-loss} below gives
an exact example in which that denominator vanishes.

\subsection{A common spectral convergence bound}
\label{sec:envelope}

Greenbaum's analysis associates perturbed Lanczos recurrences with exact
recurrences on enlarged symmetric matrices \cite{Greenbaum1989}.
Greenbaum and Strako\v{s} use such models to study convergence delay
\cite{GreenbaumStrakos1992}.  Two computed recurrences can have different
coefficients while their model matrices share a spectral enclosure.
The next proposition states the resulting common bound, with separate
residuals for the model and the physical problem.

\begin{proposition}[A common convergence bound for enlarged models]
\label{prop:envelope}
Let $A\in\R^{n\times n}$ be symmetric positive definite, let
$0\le\delta<\lambda_{\min}(A)$, and set
\[
 a_*=\lambda_{\min}(A)-\delta,\quad
 b_*=\lambda_{\max}(A)+\delta,\quad
 \widehat\kappa=b_*/a_*,\quad
 q_*=\frac{\sqrt{\widehat\kappa}-1}{\sqrt{\widehat\kappa}+1}.
\]
Suppose that the CG and direct Lanczos recurrences admit exact enlarged
Lanczos models $\widehat A_{\rm cg}$ and $\widehat A_{\rm lan}$ whose
spectra lie in $[a_*,b_*]$.  The models reproduce the respective
tridiagonal coefficients and the next coupling coefficient through $k$
steps.  For CG, use the Jacobi coefficients recovered in exact arithmetic
from the positive stored coefficients $a_j^{\rm cg}$ and $b_{j+1}^{\rm cg}$ by
\[
 \overline\alpha_1=1/a_0^{\rm cg},\qquad
 \overline\alpha_j=1/a_{j-1}^{\rm cg}
       +b_{j-1}^{\rm cg}/a_{j-2}^{\rm cg}\quad(2\le j\le k),
\]
\[
 \overline\beta_{j+1}=\sqrt{b_j^{\rm cg}}/a_{j-1}^{\rm cg}
 \quad(1\le j\le k).
\]
For $\ell\in\{{\rm cg},{\rm lan}\}$, let $\widehat r_j^\ell$ be the
exact CG residual on $\widehat A_\ell$, starting from zero with a
nonzero multiple of its first Lanczos vector as right-hand side.
Use the convention $q_*^0=1$.  Then, for $0\le j\le k$,
\[
 \frac{\|\widehat r_j^\ell\|_{\widehat A_\ell^{-1}}}
      {\|\widehat r_0^\ell\|_{\widehat A_\ell^{-1}}}
 \le2q_*^j,\qquad
 \frac{\|\widehat r_j^\ell\|_2}{\|\widehat r_0^\ell\|_2}
 \le2\sqrt{\widehat\kappa}\,q_*^j.
\]
For the stored CG residuals, assume that each squared norm is reused
or identically reevaluated in the same order in the coefficient ratios,
with relative error at most
$\gamma_n<1$, and that the divisions have relative error at most $u<1$.
Then, before a zero residual or a zero divisor,
\[
 \frac{\|r_j\|_2}{\|r_0\|_2}
 \le\sqrt{\frac{1+\gamma_n}{1-\gamma_n}}(1-u)^{-j/2}
 \frac{\|\widehat r_j^{\rm cg}\|_2}{\|\widehat r_0^{\rm cg}\|_2}.
\]
For direct Lanczos, write $r_0^{\rm true}=b-Ax_0$ and suppose
$AV_j=V_jT_j+\beta_{j+1}v_{j+1}e_j^T+F_j$.
For $\beta_1>0$, if $T_jy_j=\beta_1e_1$ is solved exactly and
$x_j=x_0+V_jy_j$, then
\[
 b-Ax_j=(r_0^{\rm true}-\beta_1v_1)
        -\beta_{j+1}(e_j^Ty_j)v_{j+1}-F_jy_j.
\]
With $\|\widehat r_0^{\rm lan}\|_2=\beta_1$, this gives
\[
 \|b-Ax_j\|_2\le\|r_0^{\rm true}-\beta_1v_1\|_2+
 \|v_{j+1}\|_2\|\widehat r_j^{\rm lan}\|_2+\|F_jy_j\|_2.
\]
\end{proposition}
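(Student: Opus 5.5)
The plan is to treat the three assertions of Proposition~\ref{prop:envelope} separately: a classical Chebyshev bound for the exact models, a telescoping argument for the stored CG residual norms, and an algebraic identity for direct Lanczos.

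\emph{Model bound.} For $\ell\in\{{\rm cg},{\rm lan}\}$, exact CG on $\widehat A_\ell$ started from zero is a Galerkin method. Its residual therefore satisfies $\widehat r_j^\ell=\pi_j(\widehat A_\ell)\widehat r_0^\ell$, where $\pi_j$ minimizes $\|\pi(\widehat A_\ell)\widehat r_0^\ell\|_{\widehat A_\ell^{-1}}$ over polynomials of degree at most $j$ with $\pi(0)=1$. This norm equals the energy norm of the error, which is the quantity CG minimizes. I will insert the shifted and scaled Chebyshev polynomial on $[a_*,b_*]$, which contains $\sigma(\widehat A_\ell)$ by hypothesis. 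Its maximum on that interval is at most $2q_*^j$, which gives the first inequality. The $2$-norm bound then follows from $\|x\|_2\le b_*^{1/2}\|x\|_{\widehat A^{-1}}$ and $\|x\|_{\widehat A^{-1}}\le a_*^{-1/2}\|x\|_2$.

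\emph{Transfer to stored CG residuals.} The key fact is the exact-arithmetic correspondence between CG and Lanczos through the factorization $T=LDL^T$. Given the Jacobi matrix with entries $\overline\alpha_j,\overline\beta_{j+1}$, exact CG on $\widehat A_{\rm cg}$ generates exactly the coefficients $a_j^{\rm cg}$ and $b_{j+1}^{\rm cg}$ from which these entries were built. This inversion is well defined because the stored coefficients are positive. Consequently
\[
\|\widehat r_j^{\rm cg}\|_2^2/\|\widehat r_0^{\rm cg}\|_2^2=\prod_{i=1}^j b_i^{\rm cg},
\]
since in exact CG each $b_i$ is a ratio of consecutive squared residual norms.

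On the computed side, write $\rho_i=\fl(\langle r_i,r_i\rangle)=\|r_i\|_2^2(1+\theta_i)$ with $|\theta_i|\le\gamma_n$, and $b_i^{\rm cg}=(\rho_i/\rho_{i-1})(1+\delta_i)$ with $|\delta_i|\le u$. Because the same $\rho_i$ is used in numerator and denominator, as the reuse hypothesis guarantees, the product telescopes:
\[
\frac{\|r_j\|_2^2}{\|r_0\|_2^2}=\frac{1+\theta_0}{1+\theta_j}\prod_{i=1}^j\frac{b_i^{\rm cg}}{1+\delta_i}
\le\frac{1+\gamma_n}{1-\gamma_n}(1-u)^{-j}\prod_{i=1}^jb_i^{\rm cg}.
\]
Taking square roots and substituting the model identity gives the stated bound.

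\emph{Direct Lanczos identity.} The identity follows by substitution. Since $x_j=x_0+V_jy_j$, we have $b-Ax_j=r_0^{\rm true}-AV_jy_j$. Applying the perturbed relation and $T_jy_j=\beta_1e_1$ gives $AV_jy_j=\beta_1v_1+\beta_{j+1}(e_j^Ty_j)v_{j+1}+F_jy_j$, which yields the displayed formula. For the bound, I use that the model $\widehat A_{\rm lan}$ reproduces $T_j$ and $\beta_{j+1}$ with orthonormal model vectors, and $\|\widehat r_0^{\rm lan}\|_2=\beta_1$. Then the exact model Galerkin coefficients are the same $y_j$. Hence $\widehat r_j^{\rm lan}=-\beta_{j+1}(e_j^Ty_j)\widehat v_{j+1}$, so $\|\widehat r_j^{\rm lan}\|_2=\beta_{j+1}|e_j^Ty_j|$. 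The triangle inequality then finishes the proof.

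The main obstacle is making the CG–Jacobi correspondence precise. One must verify that exact CG on the model reproduces the stored $a^{\rm cg}$ and $b^{\rm cg}$ through step $k$, using the $LDL^T$ recurrences with the stated $\overline\alpha_j,\overline\beta_{j+1}$, and that model residual norms are products of the $b$'s. I would prove this by induction on $j$ from the standard identities $\overline\alpha_j=1/a_{j-1}+b_{j-1}/a_{j-2}$ and $\overline\beta_{j+1}^2=b_j/a_{j-1}^2$.
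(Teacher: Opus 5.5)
Your proposal is correct and follows the paper's proof: the Chebyshev estimate on $[a_*,b_*]$ with norm equivalence gives the model bounds, the reused stored squared norms telescope through $\prod_{i=1}^j b_i^{\rm cg}=\|\widehat r_j^{\rm cg}\|_2^2/\|\widehat r_0^{\rm cg}\|_2^2$, and substituting $T_jy_j=\beta_1e_1$ into the perturbed Lanczos relation, together with the model residual norm $\beta_{j+1}|e_j^Ty_j|$, gives the direct-Lanczos estimate. Your inductive inversion of the $LDL^T$ relations spells out the CG--Lanczos correspondence that the paper simply invokes.
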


\begin{proof}
Both model matrices are positive definite and have condition number at
most $\widehat\kappa$.  The classical Chebyshev estimate for exact CG
gives the energy-norm bounds \cite[Sec.~6.11.3]{Saad2003}.
Equivalence of Euclidean and inverse energy norms gives the factor
$\sqrt{\widehat\kappa}$ in the second bound.

For the CG transfer, denote the stored squared norm by
$d_i=\|r_i\|_2^2(1+\theta_i)$, where $|\theta_i|\le\gamma_n$.
Write the stored ratio as
$b_i^{\rm cg}=(d_i/d_{i-1})(1+\epsilon_i)$, with $|\epsilon_i|\le u$.
The exact CG--Lanczos correspondence for the recovered coefficients gives
\[
 \frac{\|\widehat r_j^{\rm cg}\|_2^2}
      {\|\widehat r_0^{\rm cg}\|_2^2}
 =\prod_{i=1}^j b_i^{\rm cg}
 =\frac{\|r_j\|_2^2}{\|r_0\|_2^2}
   \frac{1+\theta_j}{1+\theta_0}
   \prod_{i=1}^j(1+\epsilon_i).
\]
Solving for the stored residual ratio proves its bound.
Multiplication of the physical Lanczos relation by $y_j$ gives the
true-residual identity.  In the exact enlarged Lanczos model the residual
norm is $\beta_{j+1}|e_j^Ty_j|$, so the triangle inequality proves the
last estimate.
\end{proof}

The existence and width of these enlarged models follow from the local
perturbation and orthogonality hypotheses of the corresponding
backward-error analysis.  The proposition shows how a common enclosure
leads to a common upper bound.  The true CG residual additionally
contains the residual gap $b-A\widehat x_j-r_j$, studied by
Greenbaum \cite{Greenbaum1997Residual}.  A rounded projected solve or
rounded formation of the Lanczos iterate adds its residual term to the
displayed direct-Lanczos identity.

Figure~\ref{fig:cg-lanczos-strakos} illustrates the distinction between
coefficients and convergence on a dense Strako\v{s} matrix with $n=48$,
extreme eigenvalues $0.1$ and $100$, and spacing parameter $\rho=0.8$.
The binary64 residual histories are similar, although the recovered
tridiagonal coefficients separate as basis orthogonality deteriorates.
The reference trajectory is computed by CG in 300-bit arithmetic.
The computed methods need additional iterations to attain the same
residual level.  Such convergence delay is consistent with enlarged
spectral models \cite{GreenbaumStrakos1992,Strakos1991,Notay1993},
while the sensitivity of tridiagonal coefficients is related to the
conditioning of the moment-to-Jacobi map
\cite{Knizhnerman1996,ChenTrogdon2024}.
Gergelits and Strako\v{s} develop composite convergence bounds using
Chebyshev polynomials \cite{GergelitsStrakos2014}.
Meurant and Strako\v{s} discuss these effects in detail
\cite{Meurant2006,MeurantStrakos2006}, and Carson, Liesen, and
Strako\v{s} examine how concrete examples clarify convergence
\cite{CarsonLiesenStrakos2024}.

\begin{figure}[t]
\centering
\includegraphics[width=\textwidth]{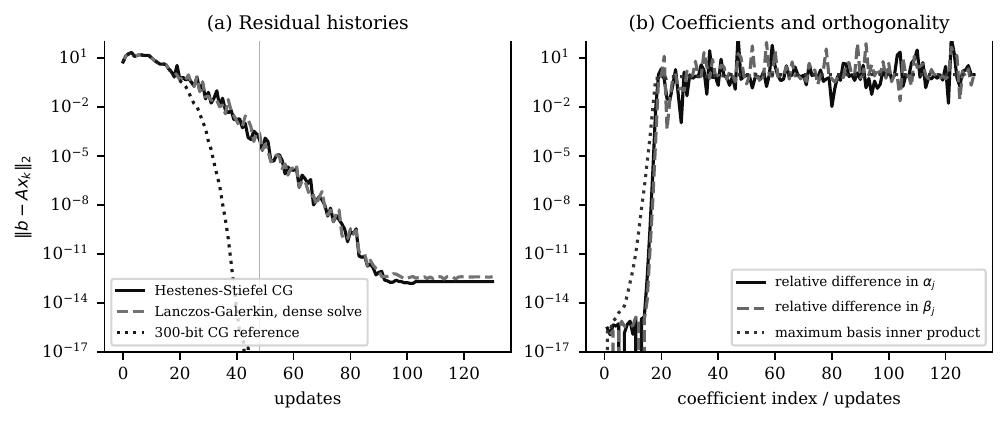}
\caption{Hestenes--Stiefel CG and direct Lanczos--Galerkin in binary64
on a dense Strako\v{s} matrix with $n=48$, $\kappa_2(A)=10^3$, and
$\rho=0.8$.  The projected system is evaluated with a general dense
linear solve.  Panel (a) compares true residual histories with a
300-bit CG reference.  The vertical line marks $k=n$.
Panel (b) shows relative differences between the tridiagonal
coefficients recovered from CG and those computed by Lanczos, together
with the loss of orthogonality of the Lanczos basis.}
\label{fig:cg-lanczos-strakos}
\end{figure}
\section{Backward error and precision bounds for CG}

The precision required for CG depends on matrix conditioning as well as
dimension.  When $u\kappa_2(A)$ is large, rounding in a matrix-vector
product can make a positive curvature denominator vanish.  We therefore
retain the condition number and fix the accumulation order.
The main theorem gives a sufficient significand precision together
with a true-residual stopping test.  A two-dimensional construction then
gives a condition-dependent lower bound for a worst-case guarantee over
representable inputs.

The proof compares the computed recurrence with exact CG on the same
data.  Exact CG reaches the solution in at most $n$ iterations.  Before
its first iterate with sufficiently small backward error, the residual
and both scalar divisors have positive lower bounds.  These bounds
permit an operation-by-operation comparison of the computed and exact
quantities.  An independently evaluated true-residual test identifies
a stored iterate meeting the target tolerance.

\subsection{Evaluation order, scaling, and error constants}
Every dense matrix row and inner product is accumulated from left to
right.  Products and partial sums are stored separately.  In an AXPY,
each product is rounded before the addition, and each quotient is formed
by one division.  A stored squared residual norm may be reused.
We count an addition, subtraction, multiplication, division, or fused
multiply-add as one scalar operation.  A fused operation may replace
one displayed product-add pair with its correctly rounded result.
The operation count uses the longer nonfused sequence with the stated
parentheses and accumulation order.

For a nonzero vector $x$, define the matrix-only backward error
\begin{equation}
 \eta_A(x)=\frac{\|b-Ax\|_2}{\|A\|_2\|x\|_2}.
 \label{eq:matrix-only-eta}
\end{equation}
The target condition has the cross-multiplied form
\begin{equation}
 \|b-Ax\|_2\le\varepsilon\|A\|_2\|x\|_2.
 \label{eq:matrix-only-cross}
\end{equation}
For $x\ne0$, this is equivalent to $\eta_A(x)\le\varepsilon$ and implies
the normwise backward-error condition \eqref{eq:backward-error}.
At $x=0$, its positive left side is $\|b\|_2$.

Let $A\in\mathbb F^{n\times n}$ be symmetric positive definite, let
$b\in\mathbb F^n\setminus\{0\}$, and take $x_0\in\mathbb F^n$.  Put
\[
 \chi=\max\left\{1,\frac{\|b-Ax_0\|_2}{\|b\|_2}\right\},
 \qquad 0<\varepsilon\le1.
\]
Choose the dyadic tolerance
\begin{equation}
 \varepsilon_0=2^{\lfloor\log_2\varepsilon\rfloor},
 \qquad \varepsilon/2<\varepsilon_0\le\varepsilon.
 \label{eq:dyadic-tolerance}
\end{equation}
Given positive powers of two $s_A$ and $s_b$, set
\begin{equation}
 A'=s_AA,\qquad b'=s_bb,\qquad x'=\frac{s_b}{s_A}x.
 \label{eq:cg-scaling-map}
\end{equation}
Assume they satisfy, in the primed variables and with
$M=\|A'\|_2$ and $B=\|b'\|_2$,
\begin{equation}
 \frac12\le M\le1,
 \qquad
 \frac12\le B\le1.
 \label{eq:cg-scaling}
\end{equation}
These powers can be determined from certified norm bounds before the
solve.  The operation count starts after these scalings have been determined.
The transformation preserves $\kappa_2(A)$, $\chi$, and the backward-error
ratios.  We drop the primes in the analysis and recover the original
variable by the exact dyadic scaling $x=(s_A/s_b)x'$.

The parameters $q$ and $d$ give lower bounds before the first accurate
exact iterate.  The constants $G$, $R$, $P$, and $U$ bound coefficients
and intermediate quantities along that trajectory, while $C$, $N$, and
$\delta$ control accumulated forward error.
Define
\begin{gather}
 q=\frac{\varepsilon_0}{10},
 \qquad
 d=\frac{\varepsilon_0^2}{200\kappa_2(A)},
 \qquad
 G=\kappa_2(A),
 \label{eq:cg-qdg}\\
 R=\sqrt{\kappa_2(A)}\chi,
 \qquad
 P=\kappa_2(A)\chi,
 \label{eq:cg-rp}\\
 U=2+d^{-1}+\kappa_2(A)\chi^2+2\kappa_2(A)(1+\chi)
      +P^2+(2\kappa_2(A)+G+2)P,                                    \label{eq:cg-U}\\
 C=\frac{8U^2}{d^2},
 \qquad
 N=2n^3+13n^2-n,
 \qquad
 \delta=\min\left\{1,\frac d2,
              \frac{\varepsilon_0}{20\sqrt n}\right\}.      \label{eq:cg-CNdelta}
\end{gather}
For the stopping calculation, $X$ and $V$ bound the residual test, while
$C_{\rm s}$, $N_{\rm s}$, and $E_{\rm s}$ bound its accumulated error.  Let
$\omega\ge0$ and set
\begin{gather}
 X=2\kappa_2(A)(1+\chi)+1,
 \qquad V=4(2+X)^2,
 \qquad C_{\rm s}=8V^2,                                      \label{eq:selector-scales}\\
 N_{\rm s}=2n^2+4n+2,
 \qquad
 \rho_{\rm s}=2u(V+1)^2+\omega,
 \qquad
 E_{\rm s}=2\rho_{\rm s}C_{\rm s}^{N_{\rm s}}.             \label{eq:selector-error}
\end{gather}

Assume scalar round-to-nearest arithmetic satisfies
\begin{equation}
 |\fl(z)-z|\le u|z|+\omega,                                   \label{eq:absolute-rounding}
\end{equation}
and that
\begin{equation}
 2(u+\omega)C^N\le\delta,
 \qquad
 E_{\rm s}\le\frac{\varepsilon_0^2}{256}.                   \label{eq:cg-smallness}
\end{equation}
For an application to the unscaled system, the powers in
\eqref{eq:cg-scaling-map}, their inverse output factor, the scaled input data,
and every component of the final inverse-scaled output must lie in the
available exponent range.  The following assumption concerns the scaled
calculation analyzed below.  Define
\begin{equation}
 H=\max\left\{2(U+1),(U+1)^2+(U+1),
              \frac{2(U+1)}d,2(V+1)^2\right\}.
 \label{eq:raw-range}
\end{equation}

The stopping test uses a separately evaluated true residual.
After forming each iterate $\widehat x_i$, beginning with $i=0$, recompute
the following quantity without modifying the CG state,
\begin{equation}
 \Phi(\widehat x_i)=\|b-A\widehat x_i\|_2^2
       -\frac{\varepsilon_0^2}{4}\|\widehat x_i\|_2^2.
 \label{eq:selector-phi}
\end{equation}
Since $\|A\|_2\ge1/2$, the inequality $\Phi(x)\le0$ implies the target condition
\eqref{eq:matrix-only-cross} with tolerance $\varepsilon_0$.  Let
$\widehat\Phi_i$ be the stored value.  Since $b\ne0$,
$\Phi(0)=\|b\|_2^2>0$, so every accepted exact test has a nonzero iterate.
Stop before computing the next CG denominator when
\begin{equation}
 \widehat\Phi_i\le-\frac{\varepsilon_0^2}{256}.
 \label{eq:selector-test}
\end{equation}

\begin{algorithm}[htbp]
\caption{Hestenes--Stiefel CG with a true-residual stopping test}
\label{alg:tested-cg}
\begin{algorithmic}[1]
\Statex Given the scaled data $A$, $b$, $x_0$, and the tolerance $\varepsilon_0$.
\State $r_0\gets\fl(b-Ax_0)$
\State $p_0\gets r_0$
\For{$i=0,1,\ldots$}
  \State Evaluate $\widehat\Phi_i$ from \eqref{eq:selector-phi} without modifying the CG state.
  \If{$\widehat\Phi_i\le-\varepsilon_0^2/256$}
    \State \Return $x_i$
  \EndIf
  \If{$r_i=0$}
    \State Terminate with a zero stored residual.
  \EndIf
  \State $q_i\gets\fl(Ap_i)$
  \State $a_i^{\rm cg}\gets\displaystyle\fl\!\left(
    \frac{\fl(\langle r_i,r_i\rangle)}{\fl(\langle q_i,p_i\rangle)}\right)$
  \State $x_{i+1}\gets\fl(x_i+a_i^{\rm cg}p_i)$
  \State $r_{i+1}\gets\fl(r_i-a_i^{\rm cg}q_i)$
  \State $b_{i+1}^{\rm cg}\gets\displaystyle\fl\!\left(
    \frac{\fl(\langle r_{i+1},r_{i+1}\rangle)}{\fl(\langle r_i,r_i\rangle)}\right)$
  \State $p_{i+1}\gets\fl(r_{i+1}+b_{i+1}^{\rm cg}p_i)$
\EndFor
\end{algorithmic}
\end{algorithm}

\begin{assumption}[Range of the scaled CG computation]
\label{ass:cg-range}
The quantities $\varepsilon_0$, $\varepsilon_0^2/4$, and
$\varepsilon_0^2/256$ are represented exactly.  The scaled data $A$, $b$,
and $x_0$ are stored floating-point values.  Every exact scalar result of
Algorithm~\ref{alg:tested-cg} and its stopping test that has magnitude at most
$H$ lies in the finite exponent range, so that
\eqref{eq:absolute-rounding} applies without overflow.
\end{assumption}

\subsection{Sufficient precision and condition-number bounds}

\begin{theorem}[Sufficient precision for a backward-error bound]
\label{thm:cg-bit-bound}
Let $A=A^T\succ0$ and $b\ne0$ be floating-point data satisfying
\[
 \frac12\le\|A\|_2\le1,
 \qquad
 \frac12\le\|b\|_2\le1.
\]
Let $x_0$ be a floating-point vector and let $0<\varepsilon\le1$.  Set
\[
 \chi=\max\left\{1,\frac{\|b-Ax_0\|_2}{\|b\|_2}\right\},
 \qquad
 \varepsilon_0=2^{\lfloor\log_2\varepsilon\rfloor}.
\]
Define
\[
 q,d,G,R,P,U,C,N,\delta
 \quad\hbox{by}\quad
 \eqref{eq:cg-qdg}\text{--}\eqref{eq:cg-CNdelta},
\]
and define
\[
 X,V,C_{\rm s},N_{\rm s},E_{\rm s}
 \quad\hbox{by}\quad
 \eqref{eq:selector-scales}\text{--}\eqref{eq:selector-error}.
\]

Suppose scalar rounding satisfies \eqref{eq:absolute-rounding}, the two
smallness inequalities \eqref{eq:cg-smallness} hold, and
Assumption~\ref{ass:cg-range} holds.  Then Algorithm~\ref{alg:tested-cg} can be evaluated
without a zero or nonfinite divisor until its stopping test succeeds.  The
test succeeds at an index $i_*\le n$, and the returned vector is nonzero and
satisfies
\begin{equation}
 \eta(\widehat x_{i_*})
 \le\eta_A(\widehat x_{i_*})
 \le\varepsilon_0\le\varepsilon.
 \label{eq:cg-eta-result}
\end{equation}
\end{theorem}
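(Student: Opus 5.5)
The plan is to compare the computed run of Algorithm~\ref{alg:tested-cg} with exact Hestenes--Stiefel CG on the same scaled data $A,b,x_0$, as the introduction to this section indicates. Let $x_i^\star,r_i^\star,p_i^\star$ denote the exact iterates. Exact CG terminates in at most $n$ steps. Let $k\le n$ be the first index with $\Phi(x_k^\star)\le-\varepsilon_0^2/128$; such an index exists because $x_n^\star=A^{-1}b$ gives $\Phi=-\tfrac{\varepsilon_0^2}{4}\|A^{-1}b\|_2^2\le-\varepsilon_0^2/4$, using $\|A^{-1}b\|_2\ge\|b\|_2/\|A\|_2\ge1/2$.

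\textbf{Step 1: exact bounds before $k$.} For every $i<k$ the exact iterate is not accurate, and from this I would extract quantitative lower bounds.
\begin{itemize}
\item $\|r_i^\star\|_2\ge q$, obtained from $\Phi(x_i^\star)>-\varepsilon_0^2/128$ together with $\|x_i^\star\|_2\le\sqrt{\kappa_2(A)}\,\chi\cdot O(1)$, which follows from $A$-norm monotonicity of the error.
\item The curvature satisfies $\langle Ap_i^\star,p_i^\star\rangle\ge\lambda_{\min}\|p_i^\star\|^2\ge\lambda_{\min}\|r_i^\star\|^2\ge d$, using $\|p_i\|\ge\|r_i\|$ from orthogonality and $\lambda_{\min}\ge 1/(2\kappa_2)$.
\end{itemize}
Upper bounds hold on the same trajectory.
\begin{itemize}
\item $\|r_i^\star\|\le R$, from the energy norm of the residual being monotone.
\item $\|p_i^\star\|\le P$ and $\|x_i^\star\|\le\kappa_2\chi$-type bounds.
\item $a_i\le\lambda_{\min}^{-1}\le G$-type bounds and $b_i\le\kappa_2$.
\end{itemize}
These give every intermediate scalar a magnitude below $U$, and $H$ in \eqref{eq:raw-range}.

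\textbf{Step 2: forward error induction.} Each CG step is a fixed sequence of at most $N/n$-ish scalar operations (dense $Ap$, three inner products, two divisions, three AXPYs). Under \eqref{eq:absolute-rounding}, a single operation on operands within distance $\Delta$ of exact values of magnitude $\le U$, and with divisors $\ge d/2$, produces an error at most $C(\Delta+u+\omega)$, where $C=8U^2/d^2$ comes from the division Lipschitz constant. Counting the operations per step and over $n$ steps, the total $N=2n^3+13n^2-n$ yields an accumulated error of at most $2(u+\omega)C^N\le\delta$ by \eqref{eq:cg-smallness}. Since $\delta\le d/2$, the computed divisors remain $\ge d/2>0$ inductively, so no zero or nonfinite divisor occurs, and Assumption~\ref{ass:cg-range} guarantees that all results stay in range. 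Counting operations exactly so that the exponent $N$ matches is routine but tedious.

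\textbf{Step 3: stopping test.} Evaluating $\Phi$ is another chain of $N_{\rm s}$ operations on quantities bounded by $V$, so $|\widehat\Phi_i-\Phi(\widehat x_i)|\le E_{\rm s}\le\varepsilon_0^2/256$ by the same lemma. The forward bound $\|\widehat x_i-x_i^\star\|\le\delta\le\varepsilon_0/(20\sqrt n)$ makes $|\Phi(\widehat x_i)-\Phi(x_i^\star)|$ small (below $\varepsilon_0^2/128$, say). The two consequences are as follows.
\begin{itemize}
\item Acceptance implies $\Phi(\widehat x_i)\le-\varepsilon_0^2/256+E_{\rm s}\le0$, hence \eqref{eq:matrix-only-cross} holds with $\widehat x_i\ne0$, and \eqref{eq:cg-eta-result} follows from the remark after \eqref{eq:matrix-only-cross}.
\item At $i=k$ the test must accept, since $\widehat\Phi_k\le-\varepsilon_0^2/128+\text{small}+E_{\rm s}\le-\varepsilon_0^2/256$ once the constants are set.
\end{itemize}
Therefore $i_*\le k\le n$. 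A zero stored residual cannot occur before $i_*$, because $\|r_i\|\ge q-\delta>0$.

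The main obstacle is Step 1's lower bounds, in particular converting "not yet accurate" into $\|r_i^\star\|\ge q$ with the specific constants. After that comes ensuring that $\delta$ is small enough to move $\Phi$ by less than the margin between $-\varepsilon_0^2/128$ and $-\varepsilon_0^2/256$. I would first try to fix the threshold defining $k$ so that these margins close with the given $q$ and $\delta$, adjusting the intermediate threshold if necessary.
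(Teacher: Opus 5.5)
Your overall architecture matches the paper's proof. You compare with exact CG up to a first ``good'' exact index, bound every exact quantity on that trajectory, run the forward-error induction $E_{\rm new}\le C(E+u+\omega)$, and then certify an independent $\Phi$-test. The gap is in how you define the comparison index, and it breaks Step~3.

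You take $k$ to be the first index with $\Phi(x_k^\star)\le-\varepsilon_0^2/128$, which is an \emph{absolute} margin. Even formally the constants do not close. Since $-\varepsilon_0^2/128+E_{\rm s}$ can already equal $-\varepsilon_0^2/256$, nothing is left to absorb the forward error.

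More fundamentally, no retuning of the absolute constant can work with the given $\delta$. The error is componentwise, so $\|\widehat x_k-x_k^\star\|_2\le\sqrt n\,\delta\le\varepsilon_0/20$, not $\delta$. The change $|\Phi(\widehat x_k)-\Phi(x_k^\star)|$ is then only bounded by about $(2\|r_k^\star\|_2+\tfrac{\varepsilon_0^2}{2}\|x_k^\star\|_2)\sqrt n\,\delta$. At $k$ you know only $\|r_k^\star\|_2\le\tfrac{\varepsilon_0}{2}\|x_k^\star\|_2$, and $\|x_k^\star\|_2$ may be as large as $2\kappa_2(A)(1+\chi)$. So your available perturbation bound grows linearly with $\|x_k^\star\|_2$, while your margin stays a fixed multiple of $\varepsilon_0^2$. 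The test therefore need not accept at $k$. Past $k$ you have no divisor floors, so you cannot continue the induction to a later index.

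The missing idea is to define the comparison index by a \emph{relative} criterion with slack, as the paper does. Let $j$ be the first index with $\|b-Ax_j\|_2\le\frac{\varepsilon_0}{4}M\|x_j\|_2$, which is half the relative tolerance certified by $\Phi\le0$. The margin then scales with $D=M\|x_j\|_2$:
\begin{itemize}
\item From $B\le(1+\varepsilon_0/4)D$ you get $D\ge2/5$.
\item Hence $M\|\widehat x_j-x_j\|_2\le\varepsilon_0/20\le\varepsilon_0D/8$.
\item This gives $\|b-A\widehat x_j\|_2\le3\varepsilon_0D/8$ and $\|\widehat x_j\|_2\ge7D/8$.
\item Therefore $\Phi(\widehat x_j)\le-\frac{13}{256}\varepsilon_0^2D^2\le-\frac{13}{1600}\varepsilon_0^2$, and this margin survives $E_{\rm s}\le\varepsilon_0^2/256$.
\end{itemize}

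The same criterion also removes the obstacle you flagged in Step~1. An upper bound on $\|x_i^\star\|_2$ points the wrong way for a lower bound on the residual. What you need is $B\le M\|x_i^\star\|_2+\|r_i^\star\|_2$, combined with the failure $\|r_i^\star\|_2>\frac{\varepsilon_0}{4}M\|x_i^\star\|_2$. Together these give $\|r_i^\star\|_2>\varepsilon_0B/(4+\varepsilon_0)\ge\varepsilon_0/10=q$ for all $i<j$.
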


\begin{proof}
Let $M=\|A\|_2$, $B=\|b\|_2$, and $m=\lambda_{\min}(A)$.  Then
\begin{equation}
 \frac12\le M,B\le1,\qquad m=M/\kappa_2(A)\ge(2\kappa_2(A))^{-1}.
 \label{eq:scaled-basic}
\end{equation}
Consider exact CG on the same data, and let $j\le n$ be the first index
satisfying
\begin{equation}
 \|b-Ax_j\|_2\le\frac{\varepsilon_0}{4}M\|x_j\|_2.
 \label{eq:first-good-exact}
\end{equation}
Such an index exists because exact CG terminates within $n$ steps.
For $i<j$, the reverse inequality and
$B\le M\|x_i\|_2+\|r_i\|_2$ give
\begin{equation}
 \|r_i\|_2>\frac{\varepsilon_0B}{4+\varepsilon_0}
 \ge\frac{\varepsilon_0}{10}=q.
 \label{eq:residual-floor}
\end{equation}
Writing $e_i=A^{-1}b-x_i$, the minimizing property of exact CG yields
\begin{equation}
 \|r_i\|_2^2=e_i^TA^2e_i
 \le M\|e_i\|_A^2\le Mr_0^TA^{-1}r_0\le\kappa_2(A)\chi^2,
 \label{eq:exact-residual-ceiling}
\end{equation}
and consequently
\begin{equation}
 \|x_i\|_2\le2\kappa_2(A)(1+\chi).
 \label{eq:exact-x-ceiling}
\end{equation}

Residual orthogonality gives $\|p_i\|_2\ge\|r_i\|_2$.
For $i\ge1$, conjugacy and $p_i=r_i+b_i^{\rm cg}p_{i-1}$ give
$\|p_i\|_A^2=p_i^TAr_i\le\|p_i\|_A\|r_i\|_A$.
Thus $\|p_i\|_2\le\sqrt{\kappa_2(A)}\|r_i\|_2\le P$, also at $i=0$.
Monotonicity of $\|e_i\|_A$ gives
$\|r_{i+1}\|_2^2/\|r_i\|_2^2\le
M\|e_{i+1}\|_A^2/(m\|e_i\|_A^2)\le\kappa_2(A)$.
Before index $j$, these estimates imply
\begin{equation}
 0<a_i^{\rm cg}\le2\kappa_2(A),\qquad
 0\le b_{i+1}^{\rm cg}\le G,\qquad \|p_i\|_2\le P,
 \label{eq:coefficient-bounds}
\end{equation}
\begin{equation}
 p_i^TAp_i\ge m\|p_i\|_2^2\ge d,\qquad
 r_i^Tr_i\ge q^2\ge d.
 \label{eq:divisor-floor}
\end{equation}
Here the usual exact CG identities are applied only through the first
accurate iterate \cite[Secs.~6.7 and~6.11.3]{Saad2003}.

The intermediate quantities satisfy the bounds
\begin{center}
\begin{tabular}{c|c}
quantity & upper bound \\ \hline
$r_i$, $p_i$, $x_i$ & $R$, $P$, $2\kappa_2(A)(1+\chi)$ \\
partial sum in a row of $Ap_i$ & $P$ \\
partial sum in $p_i^TAp_i$ & $P^2$ \\
partial sum in $r_i^Tr_i$ & $\kappa_2(A)\chi^2$ \\
$a_i^{\rm cg}$, $b_{i+1}^{\rm cg}$ & $2\kappa_2(A)$, $G$ \\
componentwise products in vector updates & $(2\kappa_2(A)+G)P$
\end{tabular}
\end{center}
Cauchy--Schwarz bounds a partial row product by $P$, since every
partial row has norm at most $M\le1$, and gives the inner-product
bounds in the same way.  The definition of $U$ bounds these
quantities, the divisors, and the updated components.
Initialization uses at most $2n^2$ operations.  An iteration uses
$2n^2-n$ for the matrix-vector product, three inner products of
$2n-1$ operations, three AXPYs of $2n$ operations, and two divisions.
Therefore
\begin{equation}
 2n^2+n(2n^2+11n-1)=N
 \label{eq:operation-count}
\end{equation}
bounds the count through the required iterate.

We now compare stored scalars with their exact counterparts.
If previous errors are at most $E\le\min\{1,d/2\}$, the computed
operands have magnitude at most $U+1$ and the divisors are at least
$d/2$.  Addition and subtraction propagate at most $2E$,
multiplication at most $2UE+E^2$, and a fused multiply-add adds at
most one further $E$.  For division,
\[
 \left|\frac{\widehat a}{\widehat b}-\frac ab\right|
 \le\frac{2E}{d}+\frac{2UE}{d^2}.
\]
Including the final rounding and using $C=8U^2/d^2$ gives
\begin{equation}
 E_{\rm new}\le C(E+u+\omega).
 \label{eq:circuit-step}
\end{equation}
Assumption~\ref{ass:cg-range} and the definition of $H$ cover the
unrounded results of these operations.  Induction from stored inputs gives
\begin{equation}
 E_t\le(u+\omega)\sum_{s=1}^tC^s
 \le2(u+\omega)C^t\le\delta\qquad(t\le N).
 \label{eq:circuit-induction}
\end{equation}
The smallness assumptions thus keep the computed divisors positive
and yield
\begin{equation}
 \|\widehat x_j-x_j\|_2\le\sqrt n\,\delta\le\varepsilon_0/20.
 \label{eq:x-perturbation}
\end{equation}

Set $D=M\|x_j\|_2$.  Equation~\eqref{eq:first-good-exact} gives
\begin{equation}
 B\le(1+\varepsilon_0/4)D,\qquad D\ge2/5.
 \label{eq:D-floor}
\end{equation}
Since $M\|\widehat x_j-x_j\|_2\le\varepsilon_0D/8$,
\[
 \|b-A\widehat x_j\|_2\le3\varepsilon_0D/8,\qquad
 M\|\widehat x_j\|_2\ge7D/8>0.
\]
It remains to verify the stopping calculation.  Every stored iterate
through $j$ has norm less than $X$.  Partial row products in its
residual test are bounded by $X$, residual components by $1+X$,
and partial squared norms by $(1+X)^2$.
Thus $V=4(2+X)^2$ bounds the exact test intermediates and their
unit neighborhoods.  Forming the residual takes at most $2n^2$
operations, the two squared norms take $4n-2$, and the remaining
scaling and subtraction take four.  The same induction, now with
$C_{\rm s}$ and per-operation rounding bound $\rho_{\rm s}$, gives
\begin{equation}
 |\widehat\Phi_i-\Phi(\widehat x_i)|\le E_{\rm s}
 \le\varepsilon_0^2/256.
 \label{eq:selector-certified-error}
\end{equation}
Each test begins from the current stored iterate, independently of
earlier tests.

At $j$, the residual and iterate bounds and $M\le1$ give
\[
 \Phi(\widehat x_j)
 \le\left(\frac{3\varepsilon_0D}{8}\right)^2
 -\frac{\varepsilon_0^2}{4}\left(\frac{7D}{8}\right)^2
 =-\frac{13}{256}\varepsilon_0^2D^2.
\]
Since $D\ge2/5$,
\begin{equation}
 \Phi(\widehat x_j)\le-\frac{13}{1600}\varepsilon_0^2.
 \label{eq:selector-margin}
\end{equation}
Consequently
$\widehat\Phi_j\le-27\varepsilon_0^2/6400
<-\varepsilon_0^2/256$, so the test succeeds by step $j\le n$.
At any accepted index,
$\Phi(\widehat x_i)\le\widehat\Phi_i+E_{\rm s}\le0$, and hence
\[
 \|b-A\widehat x_i\|_2\le\frac{\varepsilon_0}{2}\|\widehat x_i\|_2
 \le\varepsilon_0M\|\widehat x_i\|_2.
\]
The assumption $b\ne0$ ensures that the accepted vector is nonzero. This proves
\eqref{eq:cg-eta-result}.
\end{proof}

\begin{corollary}[A sufficient significand precision]
\label{cor:cg-bit-count}
Assume all hypotheses of Theorem~\ref{thm:cg-bit-bound} except the two
smallness inequalities \eqref{eq:cg-smallness}.  In addition, let $\omega=0$
and suppose the exponent range is unbounded.  Define
\begin{equation}
 p_{\rm suff}=\left\lceil\max\left\{
  \log_2\frac{2}{\delta}+N\log_2C,
  \log_2\frac{1024(V+1)^2C_{\rm s}^{N_{\rm s}}}{\varepsilon_0^2}
 \right\}\right\rceil.
 \label{eq:cg-p-bound}
\end{equation}
If $u=2^{-p}$ and $p\ge p_{\rm suff}$, then
\eqref{eq:cg-smallness} holds.  Consequently,
Algorithm~\ref{alg:tested-cg} stops at an index $i_*\le n$ and returns a
nonzero vector satisfying
\[
 \eta(\widehat x_{i_*})
 \le\eta_A(\widehat x_{i_*})
 \le\varepsilon_0\le\varepsilon.
\]
When $x_0=0$,
\[
 p_{\rm suff}
 =O\!\left(n^3\log\left(2+\frac{n\kappa_2(A)}{\varepsilon}\right)\right).
\]
\end{corollary}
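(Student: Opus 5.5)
The plan is to verify the two smallness inequalities \eqref{eq:cg-smallness} directly from $p\ge p_{\rm suff}$ with $\omega=0$ and $u=2^{-p}$, and then to invoke Theorem~\ref{thm:cg-bit-bound}. With an unbounded exponent range and $\omega=0$, the rounding model \eqref{eq:absolute-rounding} reduces to the relative model. The range part of Assumption~\ref{ass:cg-range} then holds automatically. The representability of $\varepsilon_0$, $\varepsilon_0^2/4$ and $\varepsilon_0^2/256$ is also automatic, since each is a power of two.

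For the first inequality we need $2uC^N\le\delta$, which is equivalent to $p\ge\log_2(2/\delta)+N\log_2C$. This is the first entry of the maximum in \eqref{eq:cg-p-bound}. For the second inequality, $\rho_{\rm s}=2u(V+1)^2$, so $E_{\rm s}=4u(V+1)^2C_{\rm s}^{N_{\rm s}}$. The requirement $E_{\rm s}\le\varepsilon_0^2/256$ becomes $u\le\varepsilon_0^2/(1024(V+1)^2C_{\rm s}^{N_{\rm s}})$, which is exactly the second entry of the maximum. Taking the ceiling only increases $p$, and the hypotheses of Theorem~\ref{thm:cg-bit-bound} hold. Its conclusion gives the stopping index $i_*\le n$, a nonzero return vector, and the backward-error bound.

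The only step requiring care is the asymptotic estimate when $x_0=0$. In that case $r_0=b$ and $\chi=1$. Write $\kappa=\kappa_2(A)$ and use $\varepsilon/2<\varepsilon_0\le\varepsilon$. Then $d=\varepsilon_0^2/(200\kappa)$, $P=\kappa$, and
\[
 U=O\!\left(\kappa^2+\frac{\kappa}{\varepsilon^2}\right).
\]
Hence $C=8U^2/d^2$ is polynomial in $\kappa/\varepsilon$, so $\log_2C=O(\log(2+\kappa/\varepsilon))$. Since $N=O(n^3)$, the first entry is $O(n^3\log(2+\kappa/\varepsilon))$. The term $\log_2(2/\delta)$ is $O(\log(n\kappa/\varepsilon))$, because $1/\delta\le 1+2/d+20\sqrt n/\varepsilon_0$. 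For the stopping entry, $X=4\kappa+1$, so $V$ and $C_{\rm s}$ are polynomial in $\kappa$. With $N_{\rm s}=O(n^2)$, this entry is $O(n^2\log(2+\kappa)+\log(1/\varepsilon))$. Both entries are therefore dominated by $O(n^3\log(2+n\kappa/\varepsilon))$.

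The main obstacle is purely bookkeeping. One must check that every additive constant is absorbed by the ``$2+$'' inside the logarithm, so that the bound stays valid even when $\kappa$ is near one and $\varepsilon=1$.
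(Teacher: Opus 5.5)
Your proposal is correct and follows the paper's proof. Both smallness inequalities follow directly from the two entries of the maximum, using $E_{\rm s}=4u(V+1)^2C_{\rm s}^{N_{\rm s}}$ when $\omega=0$. The $x_0=0$ estimate then follows from $\chi=1$, $P=G=\kappa_2(A)$, $\log C=O(\log(2+\kappa_2(A)/\varepsilon))$, $N=O(n^3)$ and $N_{\rm s}=O(n^2)$. One cosmetic point: $\log_2(2/\delta)$ should be written as $O(\log(2+n\kappa_2(A)/\varepsilon))$ rather than $O(\log(n\kappa_2(A)/\varepsilon))$, which your final remark already anticipates.
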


\begin{proof}
The first logarithm in \eqref{eq:cg-p-bound} implies the first inequality in
\eqref{eq:cg-smallness}.  Since
$E_{\rm s}=4u(V+1)^2C_{\rm s}^{N_{\rm s}}$, the second logarithm implies the
second inequality in \eqref{eq:cg-smallness}.

For $x_0=0$, one has $\chi=1$.  The definitions give
$P=G=\kappa_2(A)$ and hence
$\log C=O(\log(2+\kappa_2(A)/\varepsilon))$.  Since $N=O(n^3)$, the first term
in \eqref{eq:cg-p-bound} is
$O(n^3\log(2+n\kappa_2(A)/\varepsilon))$.  The second term is
$O(n^2\log(2+\kappa_2(A)/\varepsilon))$ and is no larger in order.
\end{proof}

In a finite-range format, the required exponent range has magnitude of order
$\log_2 H=O\bigl(\log(2+\kappa_2(A)/\varepsilon)\bigr)$ for $x_0=0$, in addition
to the exact scaling requirements stated before Assumption~\ref{ass:cg-range}.

The theorem controls the true residual through at most $n+1$ independent
stopping tests.  Each test begins with the current stored iterate and
uses its own $N_{\rm s}$-operation error bound.

The next two-dimensional example explains why some dependence on conditioning
is unavoidable for the chosen accumulation order.

\begin{proposition}[An SPD matrix with a zero first CG denominator]
\label{prop:cg-curvature-loss}
Fix a binary significand precision $p\ge2$, set $u=2^{-p}$, and define
\[
 A_u=\begin{bmatrix}1+2u&-1\\-1&1-u\end{bmatrix},
 \qquad b_u=(1-u,1)^T,\qquad x_0=0.
\]
Apply \eqref{eq:hscg}, with $r_0=p_0=b_u$, using the left-to-right
matrix-vector product in which every scalar product is rounded and stored
before the following addition.  No fused multiply-add is used in this
matrix-vector product.  Then $A_u$ is symmetric
positive definite and
\begin{equation}
 \frac1u<\kappa_2(A_u)\le\frac{81}{8u}.
 \label{eq:cg-curvature-kappa}
\end{equation}
For this evaluation order,
\[
 \fl(A_up_0)=0,
 \qquad
 \fl\!\left(p_0^T\fl(A_up_0)\right)=0.
\]
Thus the first CG step has a zero computed denominator and cannot
produce a finite iterate $x_1$.  The normwise backward error of $x_0$ is
$\eta(x_0)=1$.
\end{proposition}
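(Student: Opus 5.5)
Three things need proving: the spectral facts (positive definiteness and the bound \eqref{eq:cg-curvature-kappa}), the exact floating-point trajectory of the first matrix-vector product, and the backward error of $x_0$. I would take them in that order, since each is a short direct computation and none depends on an earlier result beyond the rounding model of Section~\ref{sec:arithmetic-model}.

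\textbf{Spectrum.} The trace of $A_u$ is $2+u$. The determinant is
\[
(1+2u)(1-u)-1=u-2u^2=u(1-2u),
\]
which is positive because $p\ge2$ gives $u\le1/4$. Positive trace and positive determinant make $A_u$ symmetric positive definite. Write the eigenvalues as $\lambda_\pm=\bigl(2+u\pm\sqrt{(2+u)^2-4u(1-2u)}\bigr)/2$. Then
\[
\kappa_2(A_u)=\frac{\lambda_+}{\lambda_-}=\frac{\lambda_+^2}{\det A_u}.
\]
For the lower bound, the discriminant equals $4+9u^2$, so $\lambda_+>(2+u+2)/2>1$, which gives $\lambda_+^2>1>1-2u$. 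Therefore $\kappa_2(A_u)>1/u$.

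For the upper bound, $\sqrt{4+9u^2}\le 2+9u^2/4$, so $\lambda_+\le 2+u/2+9u^2/8$. At $u\le1/4$ this gives $\lambda_+\le 2+1/8+9/128<9/4$, hence $\lambda_+^2<81/16$. Also $1-2u\ge1/2$, so
\[
\kappa_2(A_u)\le\frac{81/16}{u/2}=\frac{81}{8u}.
\]
This step is routine. The only point needing care is to keep the numeric constants tight enough at the endpoint $u=1/4$, which is where I expect the bound to be hardest to verify.

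\textbf{Rounded product.} Take $p_0=b_u=(1-u,1)^T$. Both entries are representable, since $1-u$ is the predecessor of $1$.

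For row one, the first product is $\fl\bigl((1+2u)(1-u)\bigr)$. The exact value is $1+u-2u^2$, which lies strictly between $1$ and $1+2u$. Its distance to $1$ is $u-2u^2$, and its distance to $1+2u$ is $u+2u^2$, so it rounds to $1$. This is not a tie, because $2u^2>0$. The second product, $(-1)\cdot1=-1$, is exact, and $\fl(1-1)=0$.

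For row two, the first product $-(1-u)$ is exact, and the second product $\fl\bigl((1-u)\cdot1\bigr)=1-u$ is exact. Their sum $-(1-u)+(1-u)=0$ is exact. Thus $\fl(A_up_0)=0$, and any inner product with the zero vector is exactly $0$. So $a_0^{\rm cg}$ would require dividing the nonzero value $\fl(\langle r_0,r_0\rangle)$ by zero, and no finite $x_1$ is produced.

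\textbf{Backward error.} With $x_0=0$, the numerator of \eqref{eq:backward-error} is $\|b_u\|_2$ and the denominator is $\|A_u\|_2\cdot0+\|b_u\|_2$. Hence $\eta(x_0)=1$.
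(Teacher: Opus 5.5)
Your proof is correct and follows the paper's argument: the same determinant $u(1-2u)$, the same observation that $(1+2u)(1-u)=1+u-2u^2$ rounds to $1$ because $1$ and $1+2u$ are adjacent in the format, exact cancellation in the second row, and $\eta(x_0)=1$ from $x_0=0$. The only variation is in bounding $\lambda_+$. You use the explicit discriminant $4+9u^2$, whereas the paper uses the Rayleigh quotient $\lambda_+\ge1+2u$ and the trace bound $\lambda_+\le 2+u\le 9/4$, which gives the same constant $81/(8u)$ with less computation.
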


\begin{proof}
The leading principal minor is positive and
$\det(A_u)=u(1-2u)>0$, proving positive definiteness.  The recurrence
starts from $r_0=p_0=b_u$.  The first row product satisfies
$1<(1+2u)(1-u)=1+u-2u^2<1+u$, so it rounds to one before the
subtraction.  Consequently,
\[
 \fl(A_up_0)_1
 =\fl\!\left(\fl((1+2u)(1-u))-1\right)=0,
 \qquad
 \fl(A_up_0)_2=\fl(-(1-u)+(1-u))=0.
\]
Thus the first stored CG denominator is zero, while its squared-norm
numerator is positive.  This causes breakdown at the first division.
At $x_0=0$, the backward error in
\eqref{eq:backward-error} equals one.

Let $\lambda_+$ be the larger eigenvalue.  Since the determinant is the product
of the eigenvalues,
\[
 \kappa_2(A_u)=\frac{\lambda_+^2}{u(1-2u)}.
\]
The Rayleigh quotient at the first coordinate gives $\lambda_+\ge1+2u>1$,
while $u(1-2u)<u$, which proves the lower bound in
\eqref{eq:cg-curvature-kappa}.  Since $u\le1/4$,
\[
 \lambda_+\le\operatorname{tr}(A_u)=2+u\le\frac94,
 \qquad 1-2u\ge\frac12.
\]
The upper bound follows.
\end{proof}

\begin{corollary}[A worst-case condition-number lower bound]
\label{cor:cg-curvature-bit-lower}
Fix $0<\varepsilon<1$ and $K\ge1$.  Suppose an integer precision $p\ge2$
guarantees that, for every representable two-dimensional SPD system
with $\kappa_2(A)\le K$, the nonfused left-to-right CG computation in
Proposition~\ref{prop:cg-curvature-loss} returns an iterate $x$
satisfying $\eta(x)\le\varepsilon$.  Then
\begin{equation}
 p>\log_2K-\log_2(81/8).
 \label{eq:cg-curvature-bit-lower}
\end{equation}
\end{corollary}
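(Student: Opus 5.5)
The plan is to argue by contraposition using the explicit family $A_u,b_u$ of Proposition~\ref{prop:cg-curvature-loss}. Suppose $p\ge2$ has the stated guarantee and, for contradiction, $p\le\log_2K-\log_2(81/8)$, that is, $81/(8u)\le K$ with $u=2^{-p}$. The upper bound in \eqref{eq:cg-curvature-kappa} then gives $\kappa_2(A_u)\le81/(8u)\le K$. Hence $(A_u,b_u)$ lies in the class covered by the guarantee, provided it is a representable two-dimensional SPD system.

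First I verify representability in the format with precision $p$. The entries $1+2u$, $-1$, $1-u$, and $b_u=(1-u,1)^T$ are all floating-point numbers, since $1+2u$ is the successor of one and $1-u$ its predecessor. Positive definiteness was established in the proposition. Taking $x_0=0$, which the proposition also uses, the nonfused left-to-right computation is the one the proposition analyzes.

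Next I determine what the computation can return. By Proposition~\ref{prop:cg-curvature-loss}, the first stored denominator $\fl(p_0^T\fl(A_up_0))$ is zero while the numerator is positive. The division therefore produces no finite iterate, and no $x_j$ with $j\ge1$ is ever formed. The only candidate output is $x_0=0$, whose backward error is $\eta(x_0)=1>\varepsilon$.

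The main obstacle is interpretive: I must state precisely what "returns an iterate" means when breakdown occurs. I would handle the two possible readings as follows. If breakdown counts as failing to return, the guarantee is violated outright. If the method is allowed to return its current iterate, that iterate is $x_0$ with $\eta=1>\varepsilon$, which also violates it. In either case the assumption $p\le\log_2K-\log_2(81/8)$ is contradicted, so $p>\log_2K-\log_2(81/8)$, which is \eqref{eq:cg-curvature-bit-lower}. The hypothesis $\varepsilon<1$ is used exactly in this last step.
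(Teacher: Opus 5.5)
Your argument is correct and follows the paper's proof. Assuming $(81/8)2^p\le K$, the upper bound in \eqref{eq:cg-curvature-kappa} places $A_{2^{-p}}$ in the admissible class, and its first-step breakdown leaves only $x_0$ with $\eta(x_0)=1>\varepsilon$, which contradicts the guarantee. Your explicit check that $1+2u$ and $1-u$ are representable, and your handling of both readings of ``returns an iterate,'' fill in details the paper leaves implicit.
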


\begin{proof}
If $(81/8)2^p\le K$, the matrix $A_{2^{-p}}$ from
Proposition~\ref{prop:cg-curvature-loss} belongs to the stated class.
Its $p$-bit computation breaks down before producing an iterate with
backward error less than one, contradicting the guarantee.
Therefore $(81/8)2^p>K$, which gives the asserted inequality.
\end{proof}

At $\varepsilon=1$, the initial iterate meets the normwise criterion, so the
lower conclusion excludes that endpoint.  Evaluating the first row as the
single fused multiply-add $\fl((1+2u)(1-u)-1)$ preserves its positive
value $u(1-2u)$ and avoids this zero-denominator failure.
Thus the operation order is part of the mathematical statement.

Druskin, Greenbaum, and Knizhnerman studied convergence rates and
attainable accuracy for finite-precision Lanczos
\cite{DruskinGreenbaumKnizhnerman1998}.  Musco, Musco, and Sidford give
a bit-precision analysis for matrix-function approximation
\cite{MuscoMuscoSidford2018}.  Mixed-precision CG is studied by Bake,
Carson, and Ma \cite{BakeCarsonMa2026}, while iterative refinement and
other mixed-precision techniques provide additional ways to allocate
precision \cite{CarsonHigham2018,HighamMary2022}.
Chenakkod, Derezi\'nski, Dong, and Rudelson obtain logarithmic-bit
backward-error guarantees with constant success probability for CG
applied to normal equations after a randomized perturbation of the input
\cite{ChenakkodDerezinskiDongRudelson2026}.
The result here concerns the specified recurrence on the original SPD
input, with a recomputed-residual stopping test and an $n$-update budget.
The sufficient bound and the worst-case condition-number obstruction
leave a quantitative gap that depends on the chosen evaluation order.
\subsection{Precision and the iteration budget}
\label{sec:two-regimes}

The sufficient bound in Theorem~\ref{thm:cg-bit-bound} and the
condition-number obstruction in Proposition~\ref{prop:cg-curvature-loss}
concern a prescribed iteration budget.  Allowing more steps changes the
approximation problem.  This can be seen both in polynomial estimates
and in computed CG trajectories.

Musco, Musco, and Sidford show that Lanczos can approximate a matrix
function using a number of significand bits logarithmic in the relevant
dimension, accuracy, and conditioning parameters
\cite{MuscoMuscoSidford2018}.  For inversion on a positive spectral
interval, Chebyshev approximation gives an iteration bound of order
$\sqrt{\kappa_2(A)}\log(\kappa_2(A)/\varepsilon)$, with logarithmic
precision under their hypotheses.  Chen, Greenbaum, Musco, and Musco
develop further matrix-function error bounds
\cite{ChenGreenbaumMuscoMusco2022}.  These results express the tradeoff
between the approximation degree and the accuracy of the arithmetic.
Bit-complexity analyses also study this tradeoff for sparse linear
systems, matrix anti-concentration, and continuous optimization
\cite{PengVempala2021,Nie2022,GhadiriPengVempala2023},
with related linear-system algorithms developed by Derezi\'nski and
Yang \cite{DerezinskiYang2024}.

In exact arithmetic, CG terminates once a residual polynomial vanishes
at every relevant eigenvalue.  Nearby-problem models replace individual
eigenvalues by small spectral clusters.  A polynomial that vanishes at
the original eigenvalues can then vary rapidly across the clusters.
The following interpolation estimate quantifies that variation.

\begin{lemma}[Uniform smallness on disjoint intervals]
\label{lem:intervals}
Let $n\ge2$, let $0<\lambda_1<\cdots<\lambda_n$, and suppose
\[
 0<\delta<\min\left\{\lambda_1,\frac12
                   \min_{1\le j<n}(\lambda_{j+1}-\lambda_j)\right\}.
\]
Define $E=\bigcup_{j=1}^n[\lambda_j-\delta,\lambda_j+\delta]$.
Let $\ell_0,\ldots,\ell_n$ be the Lagrange basis polynomials for the
nodes $x_0=0$ and $x_j=\lambda_j$, and set
\[
 \Lambda(\delta)=1+\sum_{j=1}^n|\ell_j(\lambda_n+\delta)|.
\]
Every polynomial $q$ of degree at most $n$ satisfying $q(0)=1$ obeys
\begin{equation}
 \max_{x\in E}|q(x)|
 \ge\frac{|\ell_0(\lambda_n+\delta)|}{\Lambda(\delta)}
 =\frac{\delta}{\lambda_n\Lambda(\delta)}
   \prod_{j=1}^{n-1}\left(\frac{\lambda_n+\delta}{\lambda_j}-1\right).
 \label{eq:interval-lower-bound}
\end{equation}
For fixed nodes, $\Lambda(\delta)\to2$ as $\delta\to0$.
\end{lemma}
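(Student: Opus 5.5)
The plan is to apply Lagrange interpolation at the point $t=\lambda_n+\delta$, which is the right endpoint of the last interval and so lies in $E$. Since $q$ has degree at most $n$ and there are $n+1$ nodes $0,\lambda_1,\ldots,\lambda_n$, interpolation is exact. Using $q(0)=1$,
\[
 q(t)=\ell_0(t)+\sum_{j=1}^n q(\lambda_j)\ell_j(t).
\]
Put $\mu=\max_{x\in E}|q(x)|$. Every node $\lambda_j$ lies in $E$, so $|q(\lambda_j)|\le\mu$, and $t\in E$ gives $|q(t)|\le\mu$. Rearranging and applying the triangle inequality yields
\[
 |\ell_0(t)|\le|q(t)|+\sum_{j=1}^n|q(\lambda_j)||\ell_j(t)|\le\mu\Bigl(1+\sum_{j=1}^n|\ell_j(t)|\Bigr)=\mu\Lambda(\delta).
\]
This is the inequality in \eqref{eq:interval-lower-bound}. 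The hypotheses on $\delta$, namely $\delta<\lambda_1$ and $2\delta$ less than the gaps, only guarantee that the intervals are disjoint and lie in $(0,\infty)$. They are not needed for the inequality itself.

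Next I evaluate $\ell_0$ explicitly:
\[
 \ell_0(t)=\prod_{j=1}^n\frac{t-\lambda_j}{0-\lambda_j}=\prod_{j=1}^n\Bigl(1-\frac{t}{\lambda_j}\Bigr).
\]
At $t=\lambda_n+\delta$ the factor $j=n$ equals $-\delta/\lambda_n$. For $j<n$ the factor equals $-\bigl((\lambda_n+\delta)/\lambda_j-1\bigr)$, which is negative because $t>\lambda_j$. Taking absolute values therefore gives $\frac{\delta}{\lambda_n}\prod_{j<n}\bigl(\frac{\lambda_n+\delta}{\lambda_j}-1\bigr)$, which is the closed form in the statement.

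For the limit, I note that each $\ell_j$ is a fixed polynomial for fixed nodes and is therefore continuous. As $\delta\to0$, the point $t\to\lambda_n$, and $\ell_j(\lambda_n)=\delta_{jn}$. Hence $\sum_{j\ge1}|\ell_j(t)|\to1$ and $\Lambda(\delta)\to2$.

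The main obstacle is only bookkeeping: confirming that the evaluation point $\lambda_n+\delta$ lies in $E$ (it is the closed right endpoint) and handling the signs in $\ell_0$ so that the absolute-value product comes out with the stated factors.
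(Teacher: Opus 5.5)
Your proposal is correct and follows the paper's proof essentially step for step: Lagrange interpolation at the nodes $0,\lambda_1,\ldots,\lambda_n$ evaluated at $\lambda_n+\delta\in E$, the triangle inequality to get $|\ell_0(\lambda_n+\delta)|\le\Lambda(\delta)\max_{E}|q|$, the product formula for $\ell_0$, and the limit $\ell_j(\lambda_n+\delta)\to\delta_{jn}$. Your added observations, namely that the hypotheses on $\delta$ are not needed for the inequality itself and the explicit sign bookkeeping in $\ell_0$, are accurate.
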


\begin{proof}
Lagrange interpolation at the $n+1$ nodes gives
\[
 q(x)=\ell_0(x)+\sum_{j=1}^nq(\lambda_j)\ell_j(x),
 \qquad \ell_0(x)=\prod_{j=1}^n(1-x/\lambda_j).
\]
Put $\varepsilon_E=\max_{x\in E}|q(x)|$ and evaluate at
$x=\lambda_n+\delta$.  The triangle inequality gives
\[
 |\ell_0(\lambda_n+\delta)|
 \le |q(\lambda_n+\delta)|
      +\sum_{j=1}^n|q(\lambda_j)||\ell_j(\lambda_n+\delta)|
 \le\varepsilon_E\Lambda(\delta).
\]
Division proves the lower bound, and the product formula for $\ell_0$
gives its displayed value.  As $\delta\to0$, the value
$\ell_n(\lambda_n+\delta)$ tends to one and the other terms in the sum
tend to zero.
\end{proof}

For a fixed spectrum, the first-order coefficient in this lower bound
is
\[
 \lim_{\delta\to0}
 \frac{1}{\delta}\,
 \frac{|\ell_0(\lambda_n+\delta)|}{\Lambda(\delta)}
 =\frac{1}{2\lambda_n}
   \prod_{j=1}^{n-1}\left(\frac{\lambda_n}{\lambda_j}-1\right).
\]
When this product is large, uniform accuracy on all the intervals
requires a small interval width.  This is a precise sensitivity
statement about polynomial approximation.  To apply it to CG, the
spectral weights of the enlarged problem and the actual cluster
locations must also be taken into account, since CG minimizes a weighted
error over those eigenvalues.  Greenbaum's spectral enclosure gives
an upper bound on the possible spread \cite{Greenbaum1989}.
Musco, Musco, and Sidford analyze the predictive limitations of
interval-based polynomial bounds \cite{MuscoMuscoSidford2018}.

\begin{figure}[t]
\centering
\includegraphics[width=0.75\textwidth]{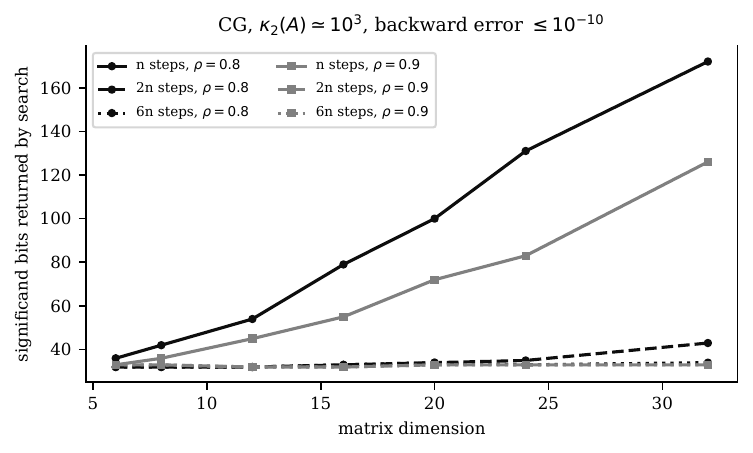}
\caption{Significand precisions returned by bisection for
Hestenes--Stiefel CG on the displayed dense Strako\v{s} test matrices,
with $\lambda_1=0.1$, $\lambda_n=100$, and two spectral-spacing
parameters.  The target is normwise backward error $10^{-10}$ within
$n$, $2n$, or $6n$ steps.  Each input matrix and right-hand side is held
fixed during its search. For the backward-error test, the residual and
vector norms are evaluated in 600-bit arithmetic from the stored iterate.
The matrix norm $\|A\|_2$ is estimated in binary64.}
\label{fig:bits}
\end{figure}

Figure~\ref{fig:bits} compares these budgets on fourteen test matrices
with $\kappa_2(A)=10^3$.  For $\rho=0.8$ and dimensions
$n=6,8,12,16,20,24,32$, the search returns
$36,42,54,79,100,131,172$ bits for the $n$-step budget.
For $\rho=0.9$, it returns $33,36,45,55,72,83,126$ bits.
The precisions found with $2n$ or $6n$ steps lie between $32$ and $43$
bits across these matrices.  Each reported precision meets the target
under this independently evaluated backward-error test.

The success criterion can vary nonmonotonically with precision.  For the
$n=12$, $\rho=0.8$ matrix and a twelve-step budget, the best backward
errors at $p=54,55,56,57$ are approximately
$7.73\times10^{-11}$, $1.90\times10^{-11}$,
$1.93\times10^{-10}$, and $4.16\times10^{-11}$.
The figure therefore reports the successful precisions found by the
specified search.

The numerical comparison shows a substantial reduction in arithmetic
precision when the iteration budget is increased on these examples.
The interpolation estimate describes sensitivity in spectral-cluster models,
and the sufficient theorem controls the specified floating-point computation
directly, giving complementary reasons to include both the stopping criterion
and the iteration budget in a precision analysis.
\section{Inexact matrix-vector products and preconditioning}
\label{sec:inexact}

Linear operators and preconditioners are often applied approximately.
Nested iterative solves are one example. Such computations introduce an
operator error in addition to rounding, and inexact Krylov methods relate
the allowable error to the progress of the outer iteration
\cite{BourasFraysse2005,SimonciniSzyld2003,VanDenEshofSleijpen2004}.
Problem 2.18 asks for extensions of Paige's analysis under these more
general perturbations, together with an accessible explanation of their
effect on Lanczos.

Chen's analysis \cite{Chen2026Simple} uses additive errors in symmetric
matrix-vector products. For $v\in\R^n$ and an accuracy parameter
$\varepsilon>0$, it assumes
\begin{equation}
 \fl(Av)=Av+e_v,\qquad
 \|e_v\|_2\le\varepsilon\|A\|_2\|v\|_2,
 \label{eq:inexact-product-model}
\end{equation}
together with normwise bounds for inner products, vector updates, and
normalization. Under smallness conditions involving $\varepsilon$ and the
step count, Chen obtains the perturbed three-term recurrence and Paige's
loss-of-orthogonality identity. The resulting estimates describe Ritz-value
containment and stabilization, followed by a Greenbaum-type interpretation
in terms of a nearby symmetric problem. They extend the scalar framework
of Paige and Greenbaum \cite{Paige1976,Paige1980,Greenbaum1989} to the
stated inexact-product model.

Derezi\'nski, Musco, and Yang \cite[Sec.~6]{DerezinskiMuscoYang2025}
analyze Lanczos with an inexact positive definite preconditioner.
Let $A,M\in\R^{n\times n}$ be symmetric positive definite and define
$\|v\|_M=(v^TMv)^{1/2}$. For each right-hand side $r\in\R^n$, suppose
the approximate solution $\widetilde z_r$ of $Mz=r$ satisfies
\begin{equation}
 \|\widetilde z_r-M^{-1}r\|_M
 \le\epsilon_0\|M^{-1}r\|_M.
 \label{eq:inexact-preconditioner}
\end{equation}
Set $\kappa_M=\kappa_2(M^{-1/2}AM^{-1/2})$ and let $0<\epsilon<1$ be
the requested relative error. Their Theorem 6.1 shows that, for a fixed
constant $c>0$, the condition
\begin{equation}
 \epsilon_0\le\left(\frac{\epsilon}{\kappa_M n}\right)^c
 \label{eq:dmy-accuracy}
\end{equation}
is sufficient for their algorithm to return $x_t$ with
\begin{equation}
 \|x_t-A^{-1}b\|_A\le\epsilon\|A^{-1}b\|_A,
 \qquad
 t=O\!\left(\sqrt{\kappa_M}\log\frac{\kappa_M}{\epsilon}\right).
 \label{eq:dmy-convergence}
\end{equation}
The reference matrix $M$ is fixed. The errors of successive solves may
differ, provided each obeys \eqref{eq:inexact-preconditioner}.

These results continue a substantial literature on inexact projections.
Bouras and Frayss\'e, Simoncini and Szyld, and van den Eshof and Sleijpen
developed conditions under which product accuracy can be relaxed as the
linear-system residual decreases
\cite{BourasFraysse2005,SimonciniSzyld2003,VanDenEshofSleijpen2004}.
Giraud, Gratton, and Langou analyzed the associated backward error
\cite{GiraudGrattonLangou2007}. Eigenvalue computations require a related
but distinct treatment, given for inexact Lanczos and Arnoldi by Simoncini
\cite{Simoncini2005}.
For CG, Golub and Ye \cite{GolubYe1999}, Notay \cite{Notay2000}, and
Knyazev and Lashuk \cite{KnyazevLashuk2007} studied inexact or variable
preconditioning. Strako\v{s} and Tich\'y incorporated errors in
preconditioner solves into error estimation \cite{StrakosTichy2005}.
Variable-precision products are analyzed by Gratton et al.\
\cite{GrattonSimonTitleyPeloquinToint2021}, while Zemke's perturbed-Krylov
formulation treats these recurrences in a common algebraic setting \cite{Zemke2007}.

\subsection{Block identities under inexact operations}

The block Paige identities in Theorem~\ref{thm:block-paige} depend on the
collected recurrence and symmetry. The same algebra therefore applies when
the local error contains an inexact product. We state this observation with
the total perturbation made explicit.

\begin{corollary}[Paige identities under inexact operations]
\label{cor:inexact-paige}
Let $A=A^T\in\R^{n\times n}$, let $k\ge1$, choose positive integers
$r_1,\ldots,r_k$, and let
$V_i\in\R^{n\times r_i}$ have orthonormal columns for $1\le i\le k$.
Put $m=\sum_{i=1}^k r_i$ and $V=[V_1\ \cdots\ V_k]$.
Let $E_k^{\rm inj}\in\R^{m\times r_k}$ be the coordinate injection into
the final block. Suppose $T=T^T\in\R^{m\times m}$ is block tridiagonal
and that $W\in\R^{n\times s}$ and $H\in\R^{s\times r_k}$ satisfy
$W^TW=I_s$, where $0\le s\le r_k$.

For each block, suppose the inexact product is $AV_i+E_i$, with
$\|E_i\|_2\le\varepsilon_i\|A\|_2$ and $\varepsilon_i\ge0$.
Let $F_{\rm round}\in\R^{n\times m}$ collect the other local errors, so
that the computation satisfies
\begin{equation}
 AV=VT+WH(E_k^{\rm inj})^T+F,\qquad
 F=F_{\rm round}-[E_1\ \cdots\ E_k].
 \label{eq:inexact-collected}
\end{equation}
Then the block Paige identities and bounds of
Theorem~\ref{thm:block-paige} hold with this $F$, and
\begin{equation}
 \|F\|_2\le\|F_{\rm round}\|_2+
 \|A\|_2\left(\sum_{i=1}^k\varepsilon_i^2\right)^{1/2}
 \le\|F_{\rm round}\|_2+\sqrt{k}\max_i\varepsilon_i\|A\|_2.
 \label{eq:inexact-total-error}
\end{equation}
\end{corollary}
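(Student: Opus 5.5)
The corollary reduces to two claims: an algebraic transfer of Theorem~\ref{thm:block-paige}, and a norm bound for $F$. For the transfer, I will use the fact noted just before the statement. The block Paige identities depend only on a collected relation of the form $AV=VT+WH(E_k^{\rm inj})^T+F$ with $A$ and $T$ symmetric, $V_i$ having orthonormal columns, and $W^TW=I_s$. They do not depend on how $F$ arose. So the first step is to check that \eqref{eq:inexact-collected} is exactly the hypothesis of Theorem~\ref{thm:block-paige}, with $F$ playing the role of the local error matrix.

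To see this, write the computed recurrence with the inexact products in place of $AV_i$. Collecting all blocks gives $[AV_1+E_1\ \cdots\ AV_k+E_k]=VT+WH(E_k^{\rm inj})^T+F_{\rm round}$. Moving the $E_i$ to the right-hand side produces \eqref{eq:inexact-collected} with $F=F_{\rm round}-[E_1\ \cdots\ E_k]$. Since $A$ itself is exactly symmetric and only the applied product is perturbed, the symmetric matrix in the identity is still $A$. The identities and bounds of Theorem~\ref{thm:block-paige} then apply verbatim, with every occurrence of $\|F\|_2$ (or of $F$ inside the identities) taken to be this collected matrix.

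For the norm bound, I will apply the triangle inequality,
\[
 \|F\|_2\le\|F_{\rm round}\|_2+\|[E_1\ \cdots\ E_k]\|_2,
\]
and then bound the block row using $\|[E_1\ \cdots\ E_k]\|_2^2=\|\sum_i E_iE_i^T\|_2\le\sum_i\|E_i\|_2^2$. Inserting $\|E_i\|_2\le\varepsilon_i\|A\|_2$ gives the first inequality in \eqref{eq:inexact-total-error}. The second follows from $\sum_i\varepsilon_i^2\le k\max_i\varepsilon_i^2$.

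The only point that needs care is making sure the hypotheses of Theorem~\ref{thm:block-paige} really concern only the collected relation and symmetry. In particular, they must not use a particular smallness or structure of the rounding term beyond what appears in its stated bounds. If some bound there assumes, for example, $\|F\|_2\le c\,u\|A\|_2$, then the conclusion should be read with $\|F\|_2$ replaced by the right side of \eqref{eq:inexact-total-error}. I expect this bookkeeping check to be the main obstacle; the algebra itself is immediate.
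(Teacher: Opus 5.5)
Your proposal is correct and matches the paper's proof. The collected relation meets exactly the hypotheses of Theorem~\ref{thm:block-paige}, and the bounds there are stated only in terms of $e=\|F\|_2$ and $\zeta$, with no smallness assumption on $F$, so your hedge about a possible $\|F\|_2\le c\,u\|A\|_2$ hypothesis is unnecessary. Your block-row estimate $\|\sum_i E_iE_i^T\|_2\le\sum_i\|E_i\|_2^2$ is equivalent to the paper's Cauchy--Schwarz bound on $\|\sum_i E_ix_i\|_2$.
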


\begin{proof}
The collected relation has precisely the dimensions, symmetry, and
within-block orthogonality required by Theorem~\ref{thm:block-paige}.
Its algebraic identities therefore apply with the stated total error.
For $x=\operatorname{col}(x_1,\ldots,x_k)\in\R^m$, where
$x_i\in\R^{r_i}$, the Cauchy--Schwarz inequality gives
\[
 \left\|\sum_i E_i x_i\right\|_2
 \le\left(\sum_i\|E_i\|_2^2\right)^{1/2}\|x\|_2.
\]
Taking the supremum over unit $x$ and using the triangle inequality proves
\eqref{eq:inexact-total-error}.
\end{proof}

For a fixed symmetric positive definite preconditioner $M$, the same
argument applies in the $M$-inner product. To see this, define
\[
 \overline A=M^{-1/2}AM^{-1/2},\qquad
 \overline V_i=M^{1/2}V_i.
\]
The matrix $M^{-1}A$ is self-adjoint in that inner product, and
$V_i^TMV_i=I_{r_i}$ becomes $\overline V_i^T\overline V_i=I_{r_i}$.
All estimates can thus be applied to the symmetric matrix $\overline A$.
If $v^TMv=1$ and the solve for $Av$ obeys
\eqref{eq:inexact-preconditioner}, its error $d_v$ satisfies
\[
 \|d_v\|_M\le\epsilon_0\|M^{-1}Av\|_M
 \le\epsilon_0\|\overline A\|_2.
\]
For a block of $r_i$ such columns, these individual bounds give a block
error of at most $\sqrt{r_i}\epsilon_0\|\overline A\|_2$ in the
transformed coordinates. This accounts for the width of the block when
columnwise solve tolerances are used in \eqref{eq:inexact-total-error}.
\section{Block Lanczos, rank deflation, and Ritz values}

Throughout this section, $A=A^T\in\R^{n\times n}$ is a real symmetric
operator.  Block Lanczos replaces single vectors by orthonormal matrix
blocks.  When a residual block is rank deficient, an orthonormal basis of its
column space forms the next block, whose size is reduced accordingly.  Let $r_i$ denote the
number of columns in block $i$.  In exact arithmetic, one step satisfies
\begin{equation}
 AV_i=V_{i-1}B_i^T+V_iA_i+V_{i+1}B_{i+1},
 \qquad V_i^TV_i=I_{r_i},\quad V_j^TV_i=0\ (j\ne i),\quad A_i=A_i^T,
 \label{eq:ideal-ragged-block}
\end{equation}
where $B_{i+1}\in\R^{r_{i+1}\times r_i}$.  Numerical deflation may give
$r_{i+1}<r_i$.  Our convention is that $B_{i+1}$ maps coordinates in block
$i$ to coordinates in block $i+1$.  The main quantities and their dimensions are
\begin{center}
\begin{tabular}{c|c|l}
symbol & size & meaning \\ \hline
$V_i$ & $n\times r_i$ & current basis block \\
$A_i$ & $r_i\times r_i$ & symmetric diagonal block \\
$B_{i+1}$ & $r_{i+1}\times r_i$ & coupling to the next block \\
$F_i$ & $n\times r_i$ & local perturbation
\end{tabular}
\end{center}
We first establish a finite-precision recurrence and bound its local error
$F_i$.  The estimate remains applicable as singular directions are
discarded.  Local loss of orthogonality is described by the two adjacent-block products
\[
 B_{i+1}V_i^TV_{i+1}
 \quad\hbox{and}\quad
 V_i^TV_{i+1}B_{i+1}.
\]
Their different multiplication orders require separate estimates.  The Gram
matrix then connects rank deflation with the distinction between independent
block Ritz vectors and repeated numerical approximations to the same
physical directions.

Related questions arise in block CG, from O'Leary's original formulation
and the variable-block schemes of Nikishin and Yeremin to Dubrulle's
reformulation \cite{OLeary1980,NikishinYeremin1995,Dubrulle2001}.
Later developments treat deflation with multiple right-hand sides and
recurrences that continue through rank deficiency
\cite{BirkFrommer2014,JiLi2017}.  In each setting, the width of a stored
block and the dimension it adds to the computed space are quantities that
the algorithm must distinguish.

\subsection{Finite-precision arithmetic in a block step}

We analyze Householder orthogonalization and a compact SVD under the
rounding-error assumptions stated below.
At block step $i$, the stored current block is
$\widetilde V_i\in\R^{n\times r_i}$.  Put
$Y_i=\fl(A\widetilde V_i)$ and
$\widehat A_i=\fl(\widetilde V_i^TY_i)$.  Form the symmetric diagonal block by
averaging corresponding entries,
\[
 (A_i)_{ab}=(A_i)_{ba}
 =\fl\!\left(\frac{(\widehat A_i)_{ab}+(\widehat A_i)_{ba}}{2}\right),
 \qquad 1\le a\le b\le r_i.
\]
With componentwise absolute values, define the computable bounds
\begin{align}
 \eps_i^A&=\|\gamma_n|A||\widetilde V_i|\|_2,
 \label{eq:matvec-error-bound}\\
 \eps_i^\alpha&=
 \|\widetilde V_i\|_2\eps_i^A
 +\|\gamma_n|\widetilde V_i|^T|Y_i|\|_2
 +\left\|\frac{u}{2}
   (|\widehat A_i|+|\widehat A_i|^T)\right\|_2.
 \label{eq:alpha-error-bound}
\end{align}
With $\widetilde V_0B_1^T=0$, form the residual block before orthogonalization,
\begin{equation}
 R_i=\fl\!\left(
  Y_i-\widetilde V_{i-1}B_i^T-\widetilde V_iA_i\right).
 \label{eq:raw-block-residual}
\end{equation}
For later error bounds, define the two stored matrix products
\[
 \widehat P_i^-=\fl(\widetilde V_{i-1}B_i^T),
 \qquad
 \widehat P_i^0=\fl(\widetilde V_iA_i).
\]
The standard matrix-product bound and the error bound for two successive
subtractions give
\begin{equation}
\begin{split}
 \eps_i^R={}&\eps_i^A+
 \bigl\|\gamma_{r_{i-1}}|\widetilde V_{i-1}||B_i|^T
       +\gamma_{r_i}|\widetilde V_i||A_i|\\
 &\hspace{17mm}+\gamma_2(
       |Y_i|+|\widehat P_i^-|+|\widehat P_i^0|)\bigr\|_2.
\end{split}
\label{eq:raw-error-bound}
\end{equation}

\begin{assumption}[Arithmetic in one block step]
\label{ass:block-arithmetic}
At block step $i$, no underflow or overflow occurs, and
\[
 n u<1,\qquad r_i u<1,\qquad r_{i-1}u<1,\qquad 2u<1.
\]
For $i=1$, set $r_0=0$ and $\gamma_0=0$.  If a stored matrix product
$\fl(XY)$ at this step has inner dimension $s$, then
\[
 |\fl(XY)-XY|\le\gamma_s|X||Y|
\]
componentwise.  For stored matrices $Y$, $P$, and $Q$, the two successive
subtractions satisfy
\[
 |\fl(Y-P-Q)-(Y-P-Q)|
 \le\gamma_2(|Y|+|P|+|Q|).
\]
Finally, each averaging operation used to form $A_i$ satisfies
\[
 \left|\fl\!\left(\frac{a+b}{2}\right)-\frac{a+b}{2}\right|
 \le\frac{u}{2}(|a|+|b|).
\]
\end{assumption}

Blocked matrix products and fused multiply-add instructions can change the
constants in these bounds.  Their standard componentwise form remains the
same when the corresponding implementation-dependent constants are used
\cite[Sec.~3.5]{Higham2002}.

\begin{lemma}[Errors in the diagonal and residual blocks]
\label{lem:block-arithmetic}
Fix $i\ge1$.  Set $Y_i=\fl(A\widetilde V_i)$ and
$\widehat A_i=\fl(\widetilde V_i^TY_i)$, form $A_i$ by the averaging rule
defined above, and form $R_i$ by \eqref{eq:raw-block-residual}.  Suppose
Assumption~\ref{ass:block-arithmetic} holds.

Then $A_i=A_i^T$.  Moreover, there are matrices
$E_i^\alpha\in\R^{r_i\times r_i}$ and
$E_i^R\in\R^{n\times r_i}$ such that
\begin{align}
 A_i&=\widetilde V_i^TA\widetilde V_i+E_i^\alpha,
 &\|E_i^\alpha\|_2&\le\eps_i^\alpha,
 \label{eq:alpha-error-identity}\\
 R_i&=A\widetilde V_i-\widetilde V_{i-1}B_i^T-
       \widetilde V_iA_i+E_i^R,
 &\|E_i^R\|_2&\le\eps_i^R.
 \label{eq:raw-error-identity}
\end{align}
Here $\widetilde V_0B_1^T$ denotes the zero matrix.
\end{lemma}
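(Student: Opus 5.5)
The plan is to track the three rounded stages (the product $Y_i$, the inner product $\widehat A_i$, the symmetrizing average) and then the residual formation, adding their errors with the triangle inequality. Symmetry of $A_i$ is immediate: the averaging rule assigns the same stored number to positions $(a,b)$ and $(b,a)$. Throughout, a componentwise bound $|E|\le X$ with $X\ge0$ gives $\|E\|_2\le\|\,|E|\,\|_2\le\|X\|_2$, because the spectral norm is monotone on nonnegative matrices. This is how each componentwise estimate becomes one of the displayed norm terms.

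For \eqref{eq:alpha-error-identity}, I would write $Y_i=A\widetilde V_i+\Delta_Y$ with $|\Delta_Y|\le\gamma_n|A||\widetilde V_i|$, so $\|\Delta_Y\|_2\le\eps_i^A$. Next I would write $\widehat A_i=\widetilde V_i^TY_i+\Delta_{\widehat A}$ with $|\Delta_{\widehat A}|\le\gamma_n|\widetilde V_i|^T|Y_i|$; the inner dimension here is $n$. Third, I would define $S=(\widehat A_i+\widehat A_i^T)/2$ and $A_i=S+\Delta_{\rm av}$ with $|\Delta_{\rm av}|\le\frac u2(|\widehat A_i|+|\widehat A_i|^T)$, using the averaging assumption entrywise; the diagonal entries are covered by the same rule.

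The key observation is that the exact part $\widetilde V_i^TA\widetilde V_i$ is symmetric because $A=A^T$. Therefore
\[
S-\widetilde V_i^TA\widetilde V_i=\tfrac12\bigl(\widetilde V_i^T\Delta_Y+\Delta_Y^T\widetilde V_i\bigr)+\tfrac12\bigl(\Delta_{\widehat A}+\Delta_{\widehat A}^T\bigr).
\]
The first term has norm at most $\|\widetilde V_i\|_2\eps_i^A$. The second has norm at most $\|\Delta_{\widehat A}\|_2$, since transposition preserves the spectral norm, and this is in turn bounded by $\|\gamma_n|\widetilde V_i|^T|Y_i|\|_2$. Adding $\Delta_{\rm av}$ gives $E_i^\alpha$ with the three terms of \eqref{eq:alpha-error-bound}. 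The only delicate point is the symmetrization: averaging a matrix and its transpose does not increase the norm bound, so no factor of two appears.

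For \eqref{eq:raw-error-identity}, I would read \eqref{eq:raw-block-residual} as the stored products $\widehat P_i^-$ and $\widehat P_i^0$ followed by two successive subtractions. The matrix-product assumption gives $\widehat P_i^-=\widetilde V_{i-1}B_i^T+\Delta^-$ with inner dimension $r_{i-1}$ and $|\Delta^-|\le\gamma_{r_{i-1}}|\widetilde V_{i-1}||B_i|^T$. Similarly $\widehat P_i^0=\widetilde V_iA_i+\Delta^0$ with inner dimension $r_i$. The subtraction assumption gives $R_i=Y_i-\widehat P_i^--\widehat P_i^0+\Delta_{\rm sub}$ with $|\Delta_{\rm sub}|\le\gamma_2(|Y_i|+|\widehat P_i^-|+|\widehat P_i^0|)$. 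Substituting $Y_i=A\widetilde V_i+\Delta_Y$ then yields
\[
E_i^R=\Delta_Y-\Delta^--\Delta^0+\Delta_{\rm sub}.
\]
I would bound $\|\Delta_Y\|_2\le\eps_i^A$ separately. The remaining three pieces are dominated entrywise by the nonnegative matrix inside the norm in \eqref{eq:raw-error-bound}, so monotonicity gives a single norm term, and $\|E_i^R\|_2\le\eps_i^R$ follows.

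For $i=1$, the product $\widehat P_1^-$ is exactly zero and the convention $\gamma_0=0$ removes that term.

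I expect no step to be a genuine obstacle. The point needing the most care is bookkeeping: $E_i^\alpha$ must be measured against the exact symmetric $\widetilde V_i^TA\widetilde V_i$ rather than against $\widehat A_i$, which is what brings in the $\|\widetilde V_i\|_2\eps_i^A$ term.
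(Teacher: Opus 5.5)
Your proposal is correct and follows essentially the same route as the paper. You convert the componentwise product, averaging, and subtraction bounds to spectral norms, and the symmetrization adds no factor of two because $\widetilde V_i^TA\widetilde V_i$ is symmetric and $\|(X+X^T)/2\|_2\le\|X\|_2$. You also spell out the bookkeeping that the paper's short proof leaves implicit: the explicit decompositions of $E_i^\alpha$ and $E_i^R$, and the monotonicity of the spectral norm on nonnegative matrices.
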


\begin{proof}
The standard product model bounds the error in $Y_i$ by $\eps_i^A$ and
the error in $\widehat A_i$ by
$\|\gamma_n|\widetilde V_i|^T|Y_i|\|_2$.  Symmetrization is a contraction since
\[
 \left\|\frac{X+X^T}{2}\right\|_2
 \le\frac{\|X\|_2+\|X^T\|_2}{2}=\|X\|_2.
\]
The final averaging operation contributes the last term in
\eqref{eq:alpha-error-bound}.  Consequently, the projection error satisfies
\eqref{eq:alpha-error-identity}.  Applying the product bounds with inner
dimensions $r_{i-1}$ and $r_i$, followed by the bound for two successive
subtractions, gives
\eqref{eq:raw-error-bound} and \eqref{eq:raw-error-identity}.
\end{proof}

Truncated economy-size SVDs are used in block Lanczos continuations
\cite[Sec.~6.2]{SimonovaTichy2025}.  For fixed-width blocks,
\v{S}imonov\'a and Tich\'y state bounds for the local recurrence error,
the right-weighted adjacent product, and the coupling norm.  They credit
these estimates to an unpublished 2022 report of Carson and Chen
\cite[Sec.~4, eqs.~(9), (11), (12)]{SimonovaTichy2025}.
Tich\'y, Meurant, and \v{S}imonov\'a also use Householder QR in block-CG
variants and an SVD for numerical rank selection
\cite{TichyMeurantSimonova2025}.

In the calculation studied here, Householder QR is applied first to
$\widetilde V_i$.  Its reflectors transform $R_i$ to coordinates along
the current block and its orthogonal complement.  A compact SVD of the
latter coordinates determines the next block.  Singular values exceeding
an absolute threshold are retained.  Choose a positive multiplier $\vartheta$
before the computation, independently of the retained block sizes, and set
\begin{equation}
 \tau_i=\vartheta u\max\{\|A\|_2,\|\fl(A\widetilde V_i)\|_2,
                  \|B_i\|_2,\|R_i\|_2\}.
 \label{eq:threshold}
\end{equation}
A certified upper bound for $\|A\|_2$ may be used in this definition, with a
corresponding adjustment of $\vartheta$.
The retained left singular vectors are then transformed back to the original
coordinates by the Householder reflectors.
The SVD calculation also returns a stored coupling block $B_{i+1}$.  Its
formation error and its rank are included in the assumptions below.

Orthogonalizing before rank selection controls both adjacent products
\[
 B_{i+1}\widetilde V_i^T\widetilde V_{i+1}
 \quad\hbox{and}\quad
 \widetilde V_i^T\widetilde V_{i+1}B_{i+1}.
\]
The bounds below follow directly from the residual factorization and depend
on the norm of the coupling block.  They therefore remain finite when a
retained singular value approaches the truncation threshold.

\subsection{Rounding-error models for Householder QR and the compact SVD}
\label{sec:block-algorithm}

Standard Householder analyses motivate the first two relations below
\cite[Lemma~19.3 and Theorem~19.4]{Higham2002}.  The LAPACK Users' Guide gives
a backward-error interpretation for the SVD 
\cite{LAPACKUsersGuide1999}.  The constants depend on the dimensions and on
the particular implementations.  The analysis below uses the following
properties.
The distinction between orthogonalization within a block and between
successive blocks is also central to the stability analyses of block
Gram--Schmidt \cite{CarsonLundRozloznik2021,CarsonLundRozloznikThomas2022}.

\begin{assumption}[Orthogonalization and rank truncation at block step $i$]
\label{ass:block-orthogonalization}
Let $1\le r_i\le n$.  There are an orthogonal matrix
$[Q_i,Q_{i,\perp}]\in\R^{n\times n}$, a matrix
$R_{i,11}\in\R^{r_i\times r_i}$ that is upper triangular, and perturbations
$E_i^V,E_i^H\in\R^{n\times r_i}$ such that
\begin{align}
 \widetilde V_i+E_i^V&=Q_iR_{i,11},
 &\|E_i^V\|_2&\le a_i^V\|\widetilde V_i\|_2,
 \label{eq:qr-relation}\\
 \begin{bmatrix}C_i\\G_i\end{bmatrix}
 &=[Q_i,Q_{i,\perp}]^T(R_i+E_i^H),
 &\|E_i^H\|_2&\le a_i^H\|R_i\|_2,
 \label{eq:reflector-relation}
\end{align}
where $C_i\in\R^{r_i\times r_i}$ and
$G_i\in\R^{(n-r_i)\times r_i}$.

The singular values used for the rank decision are the exact singular values
of $G_i+E_i^{\rm sv}$, where
\begin{equation}
 \|E_i^{\rm sv}\|_2\le a_i^{\rm sv}\|G_i\|_2.
 \label{eq:svd-value-backward-error}
\end{equation}
This assumption concerns the computed singular values.  The errors in the
computed singular vectors and their products are specified separately below.

Let $r_{i+1}\ge0$ be the number of retained left singular vectors, and let
$d_i\ge0$ be the number of discarded directions.  Suppose
\[
 U_{i,J}\in\R^{(n-r_i)\times r_{i+1}},\quad
 U_{i,D}\in\R^{(n-r_i)\times d_i},\quad
 \Sigma_{i,D}\in\R^{d_i\times d_i},\quad
 Z_{i,D}\in\R^{r_i\times d_i},
\]
where $\Sigma_{i,D}$ is diagonal with nonnegative entries, and suppose
$E_i^S\in\R^{(n-r_i)\times r_i}$ and
$E_i^M\in\R^{n\times r_{i+1}}$ satisfy
\begin{align}
 G_i&=U_{i,J}B_{i+1}
      +U_{i,D}\Sigma_{i,D}Z_{i,D}^T+E_i^S,
 &\|E_i^S\|_2&\le a_i^S\|G_i\|_2,
 \label{eq:svd-product-relation}\\
 \widetilde V_{i+1}&=Q_{i,\perp}U_{i,J}+E_i^M,
 &\|E_i^M\|_2&\le a_i^M.
 \label{eq:mapping-relation}
\end{align}
Here $B_{i+1}\in\R^{r_{i+1}\times r_i}$.  If $r_{i+1}>0$, assume
\begin{equation}
 \operatorname{rank}(B_{i+1})=r_{i+1},
 \qquad
 \|B_{i+1}\|_2\le(1+a_i^S)\|G_i\|_2.
 \label{eq:coupling-bound}
\end{equation}
Finally, suppose
\begin{align*}
 \|U_{i,J}^TU_{i,J}-I\|_2&\le a_i^U,\\
 \|U_{i,D}^TU_{i,D}-I\|_2&\le a_{i,D}^U,\\
 \|Z_{i,D}^TZ_{i,D}-I\|_2&\le a_{i,D}^Z,
\end{align*}
and every diagonal entry of $\Sigma_{i,D}$ is at most $\tau_i$.  All
constants in these bounds are nonnegative and independent of
$\sigma_{\min}(B_{i+1})$.
\end{assumption}

At the first step, the initial stored block is prepared by an
orthogonalization procedure satisfying the Gram bound used below.
Define
\begin{equation}
 \eta_i=a_i^U+2\sqrt{1+a_i^U}\,a_i^M+(a_i^M)^2.
 \label{eq:next-gram-bound}
\end{equation}
Also define
\begin{equation}
 \kappa_i^{\rm tail}
   =\sqrt{1+a_{i,D}^U}\sqrt{1+a_{i,D}^Z}.
 \label{eq:tail-factor}
\end{equation}

\begin{lemma}[Residual factorization after rank truncation]
\label{lem:thresholded-factorization}
Suppose Assumption~\ref{ass:block-orthogonalization} holds at block step $i$.
Define
\[
 h_i=\left[a_i^H+a_i^S(1+a_i^H)
       +a_i^M(1+a_i^S)(1+a_i^H)\right]\|R_i\|_2
\]
and
\begin{equation}
 E_i^{\rm fac}=Q_{i,\perp}
  \left(U_{i,D}\Sigma_{i,D}Z_{i,D}^T+E_i^S\right)
  -E_i^M B_{i+1}-E_i^H.
 \label{eq:factorization-error-definition}
\end{equation}
Then
\begin{equation}
 R_i=Q_iC_i+\widetilde V_{i+1}B_{i+1}+E_i^{\rm fac},
 \qquad
 \|E_i^{\rm fac}\|_2
 \le h_i+\kappa_i^{\rm tail}\tau_i.
 \label{eq:factorization-residual}
\end{equation}
The next block satisfies
\[
 \|\widetilde V_{i+1}^T\widetilde V_{i+1}-I\|_2\le\eta_i.
\]
\end{lemma}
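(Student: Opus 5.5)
The plan is to verify the identity by direct substitution and then bound each term of $E_i^{\rm fac}$ with the constants of Assumption~\ref{ass:block-orthogonalization}. Since $[Q_i,Q_{i,\perp}]$ is orthogonal, \eqref{eq:reflector-relation} inverts to
\[
 R_i+E_i^H=Q_iC_i+Q_{i,\perp}G_i.
\]
Next I substitute \eqref{eq:svd-product-relation} for $G_i$. Then I replace $Q_{i,\perp}U_{i,J}$ by $\widetilde V_{i+1}-E_i^M$, using \eqref{eq:mapping-relation}. Collecting terms gives
\[
 R_i=Q_iC_i+\widetilde V_{i+1}B_{i+1}
     +Q_{i,\perp}\bigl(U_{i,D}\Sigma_{i,D}Z_{i,D}^T+E_i^S\bigr)
     -E_i^MB_{i+1}-E_i^H.
\]
This is exactly \eqref{eq:factorization-residual} with the definition \eqref{eq:factorization-error-definition}. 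The identity is purely algebraic.

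For the norm bound I use the triangle inequality and $\|Q_{i,\perp}\|_2=1$. The discarded tail is bounded by
\[
 \|U_{i,D}\|_2\|\Sigma_{i,D}\|_2\|Z_{i,D}\|_2\le\sqrt{1+a_{i,D}^U}\,\tau_i\sqrt{1+a_{i,D}^Z}=\kappa_i^{\rm tail}\tau_i.
\]
This uses $\|U\|_2^2=\|U^TU\|_2\le1+a$ and the fact that the diagonal entries of $\Sigma_{i,D}$ are at most $\tau_i$. The remaining three terms must be converted to multiples of $\|R_i\|_2$:
\begin{itemize}
 \item $\|E_i^H\|_2\le a_i^H\|R_i\|_2$, directly from \eqref{eq:reflector-relation}.
 \item $\|E_i^S\|_2\le a_i^S\|G_i\|_2$. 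Since $G_i$ is a block of an orthogonal transform of $R_i+E_i^H$, we have $\|G_i\|_2\le(1+a_i^H)\|R_i\|_2$.
 \item $\|E_i^MB_{i+1}\|_2\le a_i^M(1+a_i^S)\|G_i\|_2\le a_i^M(1+a_i^S)(1+a_i^H)\|R_i\|_2$, by \eqref{eq:coupling-bound}.
\end{itemize}
These three contributions sum to $h_i$. When $r_{i+1}=0$ the $B_{i+1}$ terms are empty and the bound still holds trivially.

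For the Gram bound I write $\widetilde V_{i+1}=Q_{i,\perp}U_{i,J}+E_i^M$. Because $Q_{i,\perp}^TQ_{i,\perp}=I$,
\[
 \widetilde V_{i+1}^T\widetilde V_{i+1}-I
 =(U_{i,J}^TU_{i,J}-I)+(Q_{i,\perp}U_{i,J})^TE_i^M+(E_i^M)^TQ_{i,\perp}U_{i,J}+(E_i^M)^TE_i^M.
\]
The norms of these four terms are bounded respectively by $a_i^U$, by $\sqrt{1+a_i^U}\,a_i^M$ (twice), and by $(a_i^M)^2$. Their sum is $\eta_i$ from \eqref{eq:next-gram-bound}.

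The main point requiring care is that none of these bounds involves $\sigma_{\min}(B_{i+1})$. The coupling enters only through $\|B_{i+1}\|_2$, never through an inverse. This is what keeps the estimate finite when a retained singular value approaches the truncation threshold, and it is what I would check most carefully.
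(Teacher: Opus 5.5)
Your proposal is correct and follows the paper's proof. You invert the orthogonal transform in \eqref{eq:reflector-relation}, substitute \eqref{eq:svd-product-relation} and \eqref{eq:mapping-relation}, bound the discarded tail by $\kappa_i^{\rm tail}\tau_i$ and the other terms through $\|G_i\|_2\le(1+a_i^H)\|R_i\|_2$ and \eqref{eq:coupling-bound}, and expand the Gram matrix of $\widetilde V_{i+1}$ to obtain $\eta_i$, writing out the term-by-term estimates that the paper leaves implicit.
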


\begin{proof}
Every discarded computed singular value is at most $\tau_i$, so
\[
 \|U_{i,D}\Sigma_{i,D}Z_{i,D}^T\|_2
 \le\kappa_i^{\rm tail}\tau_i.
\]
Substitution of \eqref{eq:svd-product-relation} and
\eqref{eq:mapping-relation} into \eqref{eq:reflector-relation} gives the exact
factorization identity.  Moreover,
$\|G_i\|_2\le(1+a_i^H)\|R_i\|_2$.  The product, mapping, and Householder bounds
then give the displayed value of $h_i$.
Expanding \eqref{eq:mapping-relation} and using
$\|U_{i,J}^TU_{i,J}-I\|_2\le a_i^U$ gives
the stated Gram-matrix bound with $\eta_i$ defined by
\eqref{eq:next-gram-bound}.
\end{proof}

To bound the component $C_i$, put
\[
 \delta_i=\|\widetilde V_i^T\widetilde V_i-I\|_2,
 \qquad
 \rho_i^-=
 \|B_i\widetilde V_{i-1}^T\widetilde V_i\|_2,
 \qquad
 \ell_i^{\rm qr}=\sqrt{1-\delta_i}
                  -a_i^V\sqrt{1+\delta_i}.
\]
For $i=1$, set $\rho_1^-=0$.  If $\delta_i<1$ and
$\ell_i^{\rm qr}>0$, define
\[
 \kappa_i=(\ell_i^{\rm qr})^{-1}
\]
and
\begin{equation}
\begin{split}
 c_i={}&\kappa_i\bigl(
 \rho_i^-+\eps_i^\alpha+\delta_i\|A_i\|_2
 +\sqrt{1+\delta_i}\,\eps_i^R\\
 &\hspace{29mm}
 +a_i^V\sqrt{1+\delta_i}\,\|R_i\|_2\bigr)
 +a_i^H\|R_i\|_2.
\end{split}
\label{eq:ci-definition}
\end{equation}

\begin{lemma}[Bound for the component along the current block]
\label{lem:current-block-coefficient}
Suppose the hypotheses of Lemma~\ref{lem:block-arithmetic} and
Assumption~\ref{ass:block-orthogonalization} hold at block step $i$.  If
$\delta_i<1$ and $\ell_i^{\rm qr}>0$, then
\begin{equation}
 \|C_i\|_2\le c_i.
 \label{eq:ci-bound}
\end{equation}
\end{lemma}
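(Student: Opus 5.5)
The plan is to isolate $C_i$ from the reflector relation, express $Q_i$ through the perturbed QR factorization, and then bound the exact projection $\widetilde V_i^TR_i$ using the recurrence identities of Lemma~\ref{lem:block-arithmetic}.

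\textbf{Isolating $C_i$.} Since $[Q_i,Q_{i,\perp}]$ is orthogonal, the first block row of \eqref{eq:reflector-relation} gives $C_i=Q_i^T(R_i+E_i^H)$. Hence
\[
 \|C_i\|_2\le\|Q_i^TR_i\|_2+a_i^H\|R_i\|_2 .
\]
This accounts for the final term of \eqref{eq:ci-definition}. It remains to show that $\|Q_i^TR_i\|_2$ is at most $\kappa_i$ times the bracketed expression.

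\textbf{Expressing $Q_i$ through $\widetilde V_i$.} I would use \eqref{eq:qr-relation}. The singular values of $\widetilde V_i$ lie in $[\sqrt{1-\delta_i},\sqrt{1+\delta_i}]$, because $\|\widetilde V_i^T\widetilde V_i-I\|_2=\delta_i$. Weyl's inequality together with $\|E_i^V\|_2\le a_i^V\sqrt{1+\delta_i}$ then gives
\[
 \sigma_{\min}(\widetilde V_i+E_i^V)\ge\ell_i^{\rm qr}>0 .
\]
Since $Q_i$ has orthonormal columns, $\sigma_{\min}(R_{i,11})=\sigma_{\min}(\widetilde V_i+E_i^V)$. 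So $R_{i,11}$ is invertible with $\|R_{i,11}^{-1}\|_2\le\kappa_i$. Writing $Q_i=(\widetilde V_i+E_i^V)R_{i,11}^{-1}$ yields
\[
 \|Q_i^TR_i\|_2\le\kappa_i\bigl(\|\widetilde V_i^TR_i\|_2+a_i^V\sqrt{1+\delta_i}\,\|R_i\|_2\bigr).
\]

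\textbf{Bounding $\widetilde V_i^TR_i$.} Substitute \eqref{eq:raw-error-identity} and \eqref{eq:alpha-error-identity}, using $\widetilde V_i^TA\widetilde V_i=A_i-E_i^\alpha$. This gives
\[
 \widetilde V_i^TR_i=-E_i^\alpha-(\widetilde V_i^T\widetilde V_i-I)A_i
 -\bigl(B_i\widetilde V_{i-1}^T\widetilde V_i\bigr)^T+\widetilde V_i^TE_i^R .
\]
The four terms are bounded by $\eps_i^\alpha$, $\delta_i\|A_i\|_2$, $\rho_i^-$ and $\sqrt{1+\delta_i}\,\eps_i^R$, respectively. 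For $i=1$ the third term is absent, consistent with $\rho_1^-=0$. Summing these bounds reproduces \eqref{eq:ci-bound}.

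\textbf{Main obstacle.} The only delicate point is the step from $\sigma_{\min}$ of the perturbed block to $\|R_{i,11}^{-1}\|_2$. This requires the hypothesis $\ell_i^{\rm qr}>0$ and the orthonormality of the columns of $Q_i$. Everything else is the triangle inequality.
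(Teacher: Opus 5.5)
Your proposal is correct and follows the paper's proof essentially step for step. You isolate $C_i=Q_i^T(R_i+E_i^H)$, write $Q_i^TR_i=R_{i,11}^{-T}(\widetilde V_i+E_i^V)^TR_i$ with $\|R_{i,11}^{-1}\|_2\le\kappa_i$ from Weyl's inequality, and expand $\widetilde V_i^TR_i$ via Lemma~\ref{lem:block-arithmetic} into terms bounded by $\rho_i^-$, $\eps_i^\alpha$, $\delta_i\|A_i\|_2$ and $\sqrt{1+\delta_i}\,\eps_i^R$; your explicit identity for $\widetilde V_i^TR_i$ and the justification of $\sigma_{\min}(R_{i,11})=\sigma_{\min}(\widetilde V_i+E_i^V)$ just make explicit details the paper leaves implicit.
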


\begin{proof}
The QR relation and the Gram bound imply
\[
 \sigma_{\min}(R_{i,11})
 \ge \sqrt{1-\delta_i}-a_i^V\sqrt{1+\delta_i}.
\]
Moreover,
\[
 Q_i^TR_i
 =R_{i,11}^{-T}(\widetilde V_i+E_i^V)^TR_i.
\]
Insert \eqref{eq:raw-error-identity} in the last expression.  The terms in
$\widetilde V_i^TR_i$ are bounded, in order, by $\rho_i^-$,
$\eps_i^\alpha$, $\delta_i\|A_i\|_2$, and
$\sqrt{1+\delta_i}\,\eps_i^R$.  The term containing $E_i^V$ is at most
$a_i^V\sqrt{1+\delta_i}\|R_i\|_2$.  Finally,
$C_i=Q_i^T(R_i+E_i^H)$ and
$\|Q_i^TE_i^H\|_2\le a_i^H\|R_i\|_2$.  These inequalities give
\eqref{eq:ci-bound}.
\end{proof}

The next theorem combines these estimates to obtain the computed block
Lanczos relation.  It also controls the loss of orthogonality between
consecutive blocks.  Since $B_{i+1}$ multiplies
$\widetilde V_i^T\widetilde V_{i+1}$ from the left in one relation and from
the right in another, define
\begin{equation}
 q_i=\left(
 a_i^V\|\widetilde V_i\|_2\sqrt{1+a_i^U}
 +a_i^M\|\widetilde V_i\|_2\right)\|B_{i+1}\|_2.
 \label{eq:qi-definition}
\end{equation}
The two terms in $q_i$ account, respectively, for the QR error and for the
error made when the next block is formed.

\begin{theorem}[A computed variable-block Lanczos recurrence]
\label{thm:local-block}
Fix a block step $i$.  Set $Y_i=\fl(A\widetilde V_i)$ and
$\widehat A_i=\fl(\widetilde V_i^TY_i)$, form $A_i$ by the averaging rule above, and
form $R_i$ by \eqref{eq:raw-block-residual}.  Suppose
Assumptions~\ref{ass:block-arithmetic} and
\ref{ass:block-orthogonalization} hold and
\[
 \delta_i<\frac12,
 \qquad \eta_i<\frac12,
 \qquad \ell_i^{\rm qr}>0.
\]
Then $A_i=A_i^T$, and the following statements hold.

There is a matrix $\widetilde F_i\in\R^{n\times r_i}$ such that
\begin{equation}
 A\widetilde V_i=\widetilde V_{i-1}B_i^T+\widetilde V_iA_i
   +\widetilde V_{i+1}B_{i+1}+\widetilde F_i,
 \label{eq:stored-block-recurrence}
\end{equation}
with
\begin{equation}
 \|\widetilde F_i\|_2
 \le c_i+h_i+\kappa_i^{\rm tail}\tau_i+\eps_i^R.
 \label{eq:local-F-bound}
\end{equation}
The two adjacent-block products satisfy
\begin{equation}
 \max\!\left\{
 \|B_{i+1}\widetilde V_i^T\widetilde V_{i+1}\|_2,
 \|\widetilde V_i^T\widetilde V_{i+1}B_{i+1}\|_2
 \right\}\le q_i.
\label{eq:two-orders}
\end{equation}
The next block satisfies
\[
 \|\widetilde V_{i+1}^T\widetilde V_{i+1}-I\|_2
 \le\eta_i<\frac12.
\]
\end{theorem}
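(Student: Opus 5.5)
The plan is to assemble the theorem from Lemmas~\ref{lem:block-arithmetic}, \ref{lem:thresholded-factorization}, and \ref{lem:current-block-coefficient}. Symmetry $A_i=A_i^T$ is immediate from the averaging rule, as already recorded in Lemma~\ref{lem:block-arithmetic}. The Gram bound for $\widetilde V_{i+1}$ is the last statement of Lemma~\ref{lem:thresholded-factorization}, and the hypothesis $\eta_i<1/2$ gives the strict inequality.

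For the recurrence \eqref{eq:stored-block-recurrence}, substitute the factorization \eqref{eq:factorization-residual} into the residual identity \eqref{eq:raw-error-identity}. This yields
\[
 A\widetilde V_i=\widetilde V_{i-1}B_i^T+\widetilde V_iA_i
 +\widetilde V_{i+1}B_{i+1}+Q_iC_i+E_i^{\rm fac}-E_i^R,
\]
so we set $\widetilde F_i=Q_iC_i+E_i^{\rm fac}-E_i^R$. Since $Q_i$ has orthonormal columns, $\|Q_iC_i\|_2=\|C_i\|_2$. Lemma~\ref{lem:current-block-coefficient} bounds this by $c_i$; it applies because $\delta_i<1/2<1$ and $\ell_i^{\rm qr}>0$. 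The other two terms are bounded by $h_i+\kappa_i^{\rm tail}\tau_i$ and $\eps_i^R$. The triangle inequality then gives \eqref{eq:local-F-bound}.

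The main work is \eqref{eq:two-orders}. I would write $\widetilde V_i=Q_iR_{i,11}-E_i^V$ and $\widetilde V_{i+1}=Q_{i,\perp}U_{i,J}+E_i^M$. Because $Q_i^TQ_{i,\perp}=0$, the leading cross term vanishes, leaving
\[
 \widetilde V_i^T\widetilde V_{i+1}
 =-(E_i^V)^TQ_{i,\perp}U_{i,J}+\widetilde V_i^TE_i^M.
\]
Then:
\begin{itemize}
\item The first term is bounded by $a_i^V\|\widetilde V_i\|_2\sqrt{1+a_i^U}$, using \eqref{eq:qr-relation} and $\|U_{i,J}\|_2\le\sqrt{1+a_i^U}$.
\item The second term is bounded by $\|\widetilde V_i\|_2a_i^M$.
\end{itemize}
Multiplying by $B_{i+1}$ on either side costs a factor $\|B_{i+1}\|_2$ in each ordering, because submultiplicativity of the spectral norm does not depend on the side. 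This gives $q_i$ in both cases, matching \eqref{eq:qi-definition}. When $r_{i+1}=0$ both products are empty and the bound is trivial.

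I expect the main obstacle to be bookkeeping rather than ideas. One must check that Lemma~\ref{lem:thresholded-factorization}'s bound on $E_i^{\rm fac}$ already absorbs the $E_i^MB_{i+1}$ term. It does, through $\|B_{i+1}\|_2\le(1+a_i^S)(1+a_i^H)\|R_i\|_2$. One must also make sure no factor of $\sigma_{\min}(B_{i+1})^{-1}$ enters the argument. The orthogonality argument above never inverts $B_{i+1}$, so the bounds stay finite near the truncation threshold, as the statement asserts.
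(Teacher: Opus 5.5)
Your proposal is correct and follows the paper's proof essentially step for step. You set $\widetilde F_i=Q_iC_i+E_i^{\rm fac}-E_i^R$ and bound it with Lemmas~\ref{lem:current-block-coefficient} and~\ref{lem:thresholded-factorization}, then expand both adjacent products through $\widetilde V_i^TQ_{i,\perp}=-(E_i^V)^TQ_{i,\perp}$ and \eqref{eq:mapping-relation}, using submultiplicativity on either side of $B_{i+1}$.
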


\begin{proof}
Equation \eqref{eq:qr-relation} gives the exact identity
\[
 \widetilde V_i^TQ_{i,\perp}=-(E_i^V)^TQ_{i,\perp}.
\]
Taking norms bounds the matrix of inner products between the current stored
block and the orthogonal complement.  The bound is used in both multiplication orders
below.  Combining
\eqref{eq:raw-error-identity} and \eqref{eq:factorization-residual} gives
\[
 \widetilde F_i=Q_iC_i+E_i^{\rm fac}-E_i^R.
\]
Use \eqref{eq:ci-bound} for the first term and
\eqref{eq:factorization-residual} for the second.  The two inequalities yield
\eqref{eq:stored-block-recurrence} and \eqref{eq:local-F-bound}.

Using \eqref{eq:qr-relation} and \eqref{eq:mapping-relation}, we obtain
\begin{align*}
 B_{i+1}\widetilde V_i^T\widetilde V_{i+1}
 &=-B_{i+1}(E_i^V)^TQ_{i,\perp}U_{i,J}
   +B_{i+1}\widetilde V_i^TE_i^M,\\
 \widetilde V_i^T\widetilde V_{i+1}B_{i+1}
 &=-(E_i^V)^TQ_{i,\perp}U_{i,J}B_{i+1}
   +\widetilde V_i^TE_i^MB_{i+1}.
\end{align*}
The bounds in Assumption~\ref{ass:block-orthogonalization} and
submultiplicativity give \eqref{eq:two-orders}, with $q_i$ defined by
\eqref{eq:qi-definition}.
\end{proof}

The recurrence and adjacent-product bounds depend directly on
$\|B_{i+1}\|_2$.  The order of the projection operations has a separate
effect.  In particular, symmetrizing the diagonal block after subtracting
the previous block introduces a term involving the other adjacent product.
We make this dependence precise by retaining the local operation errors.

\begin{proposition}[Symmetrization without re-projection couples the two adjacent products]
\label{prop:symmetrisation}
Consider a block Lanczos step in which
$Y_i=\fl(A\widetilde V_i-\widetilde V_{i-1}B_i^T)$,
$\widehat A_i=\fl(\widetilde V_i^TY_i)$, $A_i=\fl\bigl((\widehat A_i+\widehat A_i^T)/2\bigr)$,
$R_i=\fl(Y_i-\widetilde V_iA_i)$, and a computed factorization forms
$\widetilde V_{i+1}B_{i+1}$ directly from $R_i$.
Write $X_i=\widetilde V_{i-1}^T\widetilde V_i$ and suppose
$\|\widetilde V_i^T\widetilde V_i-I\|_2\le\delta_i$.
Define the local errors by
\begin{align*}
Y_i&=A\widetilde V_i-\widetilde V_{i-1}B_i^T+E_i^Y,\\
\widehat A_i&=\widetilde V_i^TY_i+E_i^{\rm proj},\\
A_i&=(\widehat A_i+\widehat A_i^T)/2+E_i^{\rm sym},\\
R_i&=Y_i-\widetilde V_iA_i+E_i^{\rm upd}
    =\widetilde V_{i+1}B_{i+1}+E_i^{\rm qr},
\end{align*}
and set
\[
\varepsilon_i^{\rm loc}
=\|\widetilde V_i\|_2
 (\|E_i^Y\|_2+\|E_i^{\rm upd}\|_2+\|E_i^{\rm qr}\|_2)
 +\|E_i^{\rm proj}\|_2+\|E_i^{\rm sym}\|_2.
\]
Here $E_i^Y,E_i^{\rm upd},E_i^{\rm qr}\in\R^{n\times r_i}$ and
$E_i^{\rm proj},E_i^{\rm sym}\in\R^{r_i\times r_i}$.
Then
\begin{equation}
 \|\widetilde V_i^T\widetilde V_{i+1}B_{i+1}\|_2
 \le\tfrac12\|B_iX_i\|_2+\tfrac12\|X_i^TB_i^T\|_2
       +\delta_i\|A_i\|_2+\varepsilon_i^{\rm loc}
 =\|B_iX_i\|_2+\delta_i\|A_i\|_2+\varepsilon_i^{\rm loc}.
 \label{eq:symmetrisation-recursion}
\end{equation}
For $i\ge2$, suppose additionally that $B_i$ is a nonempty matrix with full row rank.
Let $B_i^\dagger$ be its Moore--Penrose inverse and define
$\kappa_2(B_i)=\|B_i\|_2\|B_i^\dagger\|_2$.  With
$L_i=\|\widetilde V_i^T\widetilde V_{i+1}B_{i+1}\|_2$, one obtains
\[
 L_i\le\kappa_2(B_i)L_{i-1}
           +\delta_i\|A_i\|_2+\varepsilon_i^{\rm loc}.
\]
For an unsymmetrized diagonal block $A_i=\widehat A_i$, the direct bound is
\[
 L_i\le\delta_i\|\widehat A_i\|_2+\|E_i^{\rm proj}\|_2
       +\|\widetilde V_i\|_2
          (\|E_i^{\rm upd}\|_2+\|E_i^{\rm qr}\|_2).
\]
Alternatively, let $P_i^{\rm cur}$ be the orthogonal projector onto
$\operatorname{range}(\widetilde V_i)$ and suppose the next block is formed
from $(I-P_i^{\rm cur})R_i+E_i^{\rm rep}$ with factorization error
$E_i^{\rm qr}$, where $E_i^{\rm rep}\in\R^{n\times r_i}$.  Then
\[
 L_i\le\|\widetilde V_i\|_2
          (\|E_i^{\rm rep}\|_2+\|E_i^{\rm qr}\|_2).
\]
\end{proposition}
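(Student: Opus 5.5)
The plan is to compute $\widetilde V_i^T\widetilde V_{i+1}B_{i+1}$ exactly from the local error identities, and then bound each term. From the last line of the definitions,
\[
 \widetilde V_i^T\widetilde V_{i+1}B_{i+1}
 =\widetilde V_i^TR_i-\widetilde V_i^TE_i^{\rm qr}
 =\widetilde V_i^TY_i-\widetilde V_i^T\widetilde V_iA_i
   +\widetilde V_i^T(E_i^{\rm upd}-E_i^{\rm qr}).
\]
Next write $\widetilde V_i^TY_i=\widehat A_i-E_i^{\rm proj}$ and $\widetilde V_i^T\widetilde V_iA_i=A_i+(\widetilde V_i^T\widetilde V_i-I)A_i$. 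The last piece is bounded by $\delta_i\|A_i\|_2$. The key cancellation is then
\[
 \widehat A_i-A_i=\tfrac12(\widehat A_i-\widehat A_i^T)-E_i^{\rm sym}.
\]
It remains to evaluate the antisymmetric part of $\widehat A_i$. Using $\widehat A_i=\widetilde V_i^TA\widetilde V_i-\widetilde V_i^T\widetilde V_{i-1}B_i^T+\widetilde V_i^TE_i^Y+E_i^{\rm proj}$ and the symmetry of $A$, the term $\widetilde V_i^TA\widetilde V_i$ cancels in $\widehat A_i-\widehat A_i^T$. What is left is $-X_i^TB_i^T+B_iX_i$, plus the error terms $\widetilde V_i^TE_i^Y-(E_i^Y)^T\widetilde V_i$ and $E_i^{\rm proj}-(E_i^{\rm proj})^T$.

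Collecting all terms and applying the triangle inequality gives \eqref{eq:symmetrisation-recursion}. Halving the antisymmetric error terms offsets the doubling, so $\|E_i^Y\|_2$ and $\|E_i^{\rm proj}\|_2$ each appear with total coefficient at most one. The projection error enters twice: once with weight $1$ through $\widetilde V_i^TY_i$, and once with weight $\tfrac12\cdot2$ through the antisymmetric part. I expect the constant bookkeeping here to be the delicate step. If the constants do not come out as stated, I would regroup the terms as $\widetilde V_i^TY_i-\tfrac12(\widehat A_i+\widehat A_i^T)$, which is precisely $\tfrac12(\widetilde V_i^TY_i-Y_i^T\widetilde V_i)-\tfrac12(E_i^{\rm proj}+(E_i^{\rm proj})^T)$. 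This grouping shows that $E_i^{\rm proj}$ enters with coefficient exactly one, and that $E_i^Y$ enters as $\tfrac12(\widetilde V_i^TE_i^Y-(E_i^Y)^T\widetilde V_i)$, with norm at most $\|\widetilde V_i\|_2\|E_i^Y\|_2$. The equality in \eqref{eq:symmetrisation-recursion} holds because a matrix and its transpose have the same spectral norm.

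For the recursion when $i\ge2$, use $B_iX_i=B_i\widetilde V_{i-1}^T\widetilde V_i$. Full row rank gives $B_iB_i^\dagger=I$, so
\[
 B_iX_i=B_i\widetilde V_{i-1}^T\widetilde V_iB_iB_i^\dagger .
\]
The middle factor $\widetilde V_{i-1}^T\widetilde V_iB_i$ is the previous step's product, so $\|B_iX_i\|_2\le\|B_i\|_2L_{i-1}\|B_i^\dagger\|_2=\kappa_2(B_i)L_{i-1}$.

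In the unsymmetrized case $A_i=\widehat A_i$, the same expansion gives $\widetilde V_i^TY_i-\widetilde V_i^T\widetilde V_i\widehat A_i=-E_i^{\rm proj}-(\widetilde V_i^T\widetilde V_i-I)\widehat A_i$. No $X_i$ term appears, and $E_i^Y$ drops out entirely. The direct bound follows, with the terms from $E_i^{\rm upd}$ and $E_i^{\rm qr}$ as before.

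For the re-projected variant, write $\widetilde V_{i+1}B_{i+1}=(I-P_i^{\rm cur})R_i+E_i^{\rm rep}-E_i^{\rm qr}$ and use $\widetilde V_i^T(I-P_i^{\rm cur})=0$, since $\widetilde V_i^TP_i^{\rm cur}=\widetilde V_i^T$. Only the error terms remain, and bounding them gives the stated estimate.
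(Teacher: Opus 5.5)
Your proposal is correct and follows the paper's own proof. It uses the same exact expansion
$\widetilde V_i^T\widetilde V_{i+1}B_{i+1}=\tfrac12(B_iX_i-X_i^TB_i^T)-(\widetilde V_i^T\widetilde V_i-I)A_i+\tfrac12(\widetilde V_i^TE_i^Y-(E_i^Y)^T\widetilde V_i)-\tfrac12(E_i^{\rm proj}+(E_i^{\rm proj})^T)-E_i^{\rm sym}+\widetilde V_i^T(E_i^{\rm upd}-E_i^{\rm qr})$,
the same identity $B_iX_i=B_i(X_iB_i)B_i^\dagger$ for the recursion, and the same direct arguments for the unsymmetrized and re-projected variants. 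Your regrouping is required rather than a fallback: bounding the two occurrences of $E_i^{\rm proj}$ separately gives coefficient two, and only combining them as $-E_i^{\rm proj}+\tfrac12(E_i^{\rm proj}-(E_i^{\rm proj})^T)=-\tfrac12(E_i^{\rm proj}+(E_i^{\rm proj})^T)$ yields the stated coefficient one.
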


\begin{proof}
Set $D_i=\widetilde V_i^T\widetilde V_i-I$ and write
$\operatorname{sym}(X)=(X+X^T)/2$ and
$\operatorname{skew}(X)=(X-X^T)/2$ for square matrices $X$.
Substitution of the four local error identities gives
\begin{align*}
\widetilde V_i^T\widetilde V_{i+1}B_{i+1}
={}&\tfrac12(B_iX_i-X_i^TB_i^T)-D_iA_i
 +\operatorname{skew}(\widetilde V_i^TE_i^Y)\\
&-\operatorname{sym}(E_i^{\rm proj})-E_i^{\rm sym}
 +\widetilde V_i^T(E_i^{\rm upd}-E_i^{\rm qr}).
\end{align*}
Both symmetrization and skew-symmetrization have spectral norm at most
that of their argument.  The triangle inequality proves
\eqref{eq:symmetrisation-recursion}.  When $B_i$ has full row rank,
$B_iB_i^\dagger=I$, so
\[
B_iX_i=B_i(X_iB_i)B_i^\dagger,
\qquad
\|B_iX_i\|_2\le\kappa_2(B_i)\|X_iB_i\|_2.
\]
This proves the recurrence for $L_i$.  Taking $A_i=\widehat A_i$ in the
local error identities gives the unsymmetrized bound directly.
For the final assertion, multiply the projected residual factorization by
$\widetilde V_i^T$ and use
$\widetilde V_i^T(I-P_i^{\rm cur})=0$.
\end{proof}

\begin{figure}[t]
\centering
\includegraphics[width=0.94\textwidth]{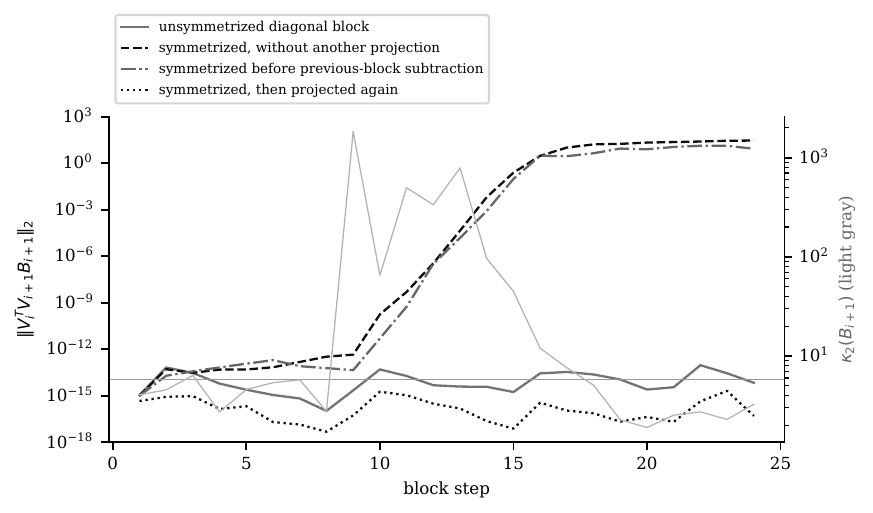}
\caption{Right-weighted adjacent products for four block-update orders
in binary64 on a dense Strako\v{s} matrix ($n=200$, $\lambda_1=0.1$,
$\lambda_n=100$, spacing parameter $\rho=0.7$), with block width $3$.
The unsymmetrized and re-projected variants retain small adjacent products.
The two symmetrized variants accumulate larger products.  In variant (d),
a thin QR of the current block determines the projector, and a QR of the
projected residual gives the next block.  The gray curve shows
$\kappa_2(B_{i+1})$ on the right axis, and the horizontal line marks
$u\|A\|_2$.}
\label{fig:localorth}
\end{figure}

Figure~\ref{fig:localorth} illustrates the dependence on update order.
The adjacent products grow substantially in the two symmetrized variants,
whereas the other two variants keep them near the local rounding-error
scale.  The recurrence in Proposition~\ref{prop:symmetrisation} explains
how an ill-conditioned coupling block can amplify an existing adjacent
product.  Re-projection removes that term.  In the scalar case the skew
part vanishes, which explains why the distinction first appears for blocks.
These products enter the diagonal blocks of $M$ below and hence the
constant in the block Paige identities.

\subsection{Numerical rank and components of the residual}

The residual used for rank selection contains both new directions and
components in earlier blocks.  Removing the current block separates these
two orthogonal parts.  Their Gram matrices describe the resulting singular
values.
The dimension of the exact block Krylov space is characterized by its block
grade \cite{GutknechtSchmelzer2009}, while attainable block Arnoldi and
GMRES behavior is studied in \cite{KubinovaSoodhalter2020}.  Here the
orthogonal decomposition identifies which residual directions enlarge the
space at the current step.

Let
\[
 \mathcal S_i=\operatorname{range}
 [\,\widetilde V_1\ \cdots\ \widetilde V_i\,],
\]
and let $P_i$ and $P_i^{\rm cur}$ be the orthogonal projectors onto
${\cal S}_i$ and $\operatorname{range}(\widetilde V_i)$, respectively.  To
separate the geometry from the errors in applying the Householder reflectors
and computing the SVD, define the residual after exact removal of the current
block by
\[
 \overline R_i=(I-P_i^{\rm cur})R_i.
\]
Its new and previously computed components are
\begin{equation}
 N_i=(I-P_i)R_i,
 \qquad
 C_i^{\rm old}=(P_i-P_i^{\rm cur})R_i.
 \label{eq:new-old-components}
\end{equation}

\begin{proposition}[Rank selected from the residual block]
\label{prop:rank-selection}
The matrices in \eqref{eq:new-old-components} satisfy
\begin{equation}
 \overline R_i=N_i+C_i^{\rm old},
 \qquad N_i^TC_i^{\rm old}=0.
 \label{eq:new-old-orthogonal}
\end{equation}
Consequently,
\begin{equation}
 \overline R_i^T\overline R_i
 =N_i^TN_i+(C_i^{\rm old})^TC_i^{\rm old}.
 \label{eq:new-old-gram}
\end{equation}
Suppose that the matrix whose singular values are used for the rank decision
has the backward representation
\[
 \widehat R_i=\overline R_i+E_i,
 \qquad \|E_i\|_2\le\epsilon_i,
\]
where $E_i$ includes the errors in applying the reflectors and computing the
compact SVD.  Put
\[
 d_i=\|C_i^{\rm old}\|_2+\epsilon_i,
 \qquad
 \widehat r_i=
 \#\{j:\sigma_j(\widehat R_i)>\tau_i\}.
\]
Then
\begin{equation}
 \sigma_j(N_i)-\epsilon_i
 \le\sigma_j(\widehat R_i)
 \le\sqrt{\sigma_j(N_i)^2+\|C_i^{\rm old}\|_2^2}+\epsilon_i.
 \label{eq:rank-selection-singular-bounds}
\end{equation}
In particular,
\begin{equation}
 \#\{j:\sigma_j(N_i)>\tau_i+\epsilon_i\}
 \le \widehat r_i
 \le
 \#\{j:\sigma_j(N_i)>\tau_i-d_i\}.
 \label{eq:rank-selection-bracket}
\end{equation}
The indices in these sets range from $1$ to $\min(n,r_i)$.  If
$\tau_i-d_i<0$, the upper bound is understood as the number of singular
values in that range.  In particular, if
$r=\operatorname{rank}(N_i)<r_i$, retaining more than $r$ directions
requires
\begin{equation}
 \|C_i^{\rm old}\|_2+\epsilon_i>\tau_i.
 \label{eq:old-component-crosses-threshold}
\end{equation}
\end{proposition}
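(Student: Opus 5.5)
The argument splits into an exact orthogonal decomposition followed by singular-value perturbation.

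\emph{Decomposition.} Since $\operatorname{range}(\widetilde V_i)\subseteq\mathcal S_i$, the projectors commute and $P_iP_i^{\rm cur}=P_i^{\rm cur}$. Hence $P_i-P_i^{\rm cur}$ is the orthogonal projector onto $\mathcal S_i\ominus\operatorname{range}(\widetilde V_i)$. Writing $I-P_i^{\rm cur}=(I-P_i)+(P_i-P_i^{\rm cur})$ gives $\overline R_i=N_i+C_i^{\rm old}$. The orthogonality $N_i^TC_i^{\rm old}=R_i^T(I-P_i)(P_i-P_i^{\rm cur})R_i=0$ holds because the two projectors have orthogonal ranges. The Gram identity \eqref{eq:new-old-gram} then follows by expanding $\overline R_i^T\overline R_i$; the cross terms vanish.

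\emph{Singular-value bounds for $\overline R_i$.} From \eqref{eq:new-old-gram} we have $\overline R_i^T\overline R_i\succeq N_i^TN_i$. Weyl's monotonicity for eigenvalues of symmetric matrices then gives $\sigma_j(\overline R_i)\ge\sigma_j(N_i)$. For the upper bound, apply Weyl's inequality $\lambda_j(X+Y)\le\lambda_j(X)+\lambda_1(Y)$ with $X=N_i^TN_i$ and $Y=(C_i^{\rm old})^TC_i^{\rm old}$. This gives $\sigma_j(\overline R_i)^2\le\sigma_j(N_i)^2+\|C_i^{\rm old}\|_2^2$. Finally, the additive perturbation bound $|\sigma_j(\overline R_i+E_i)-\sigma_j(\overline R_i)|\le\|E_i\|_2\le\epsilon_i$ yields \eqref{eq:rank-selection-singular-bounds}. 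All singular values are indexed over $1,\dots,\min(n,r_i)$ and padded by zeros where needed.

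\emph{Counting.} For the lower bracket, if $\sigma_j(N_i)>\tau_i+\epsilon_i$ then $\sigma_j(\widehat R_i)>\tau_i$, so each such index is counted by $\widehat r_i$. For the upper bracket, use $\sqrt{s^2+c^2}\le s+c$ to get $\sigma_j(\widehat R_i)\le\sigma_j(N_i)+d_i$. Thus $\sigma_j(\widehat R_i)>\tau_i$ forces $\sigma_j(N_i)>\tau_i-d_i$. The index sets are initial segments because singular values are ordered, so the inclusions translate into the counting inequalities \eqref{eq:rank-selection-bracket}. When $\tau_i-d_i<0$ the upper set is everything, which matches the stated convention.

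\emph{Final claim.} If $r=\operatorname{rank}(N_i)<r_i$, then $\sigma_{r+1}(N_i)=0$. Retaining more than $r$ directions means $\widehat r_i\ge r+1$, so $\sigma_{r+1}(\widehat R_i)>\tau_i$. The upper bound at $j=r+1$ then gives $\tau_i<\|C_i^{\rm old}\|_2+\epsilon_i$, which is \eqref{eq:old-component-crosses-threshold}.

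I expect no step to be a real obstacle. The only care needed is in the Weyl step: the upper bound must be stated on squared singular values before taking square roots, and the monotone indexing must be handled when $n<r_i$.
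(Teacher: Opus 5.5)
Your proposal is correct and follows essentially the same route as the paper. The shared steps are the orthogonal splitting $I-P_i^{\rm cur}=(I-P_i)+(P_i-P_i^{\rm cur})$, the Gram identity, min--max/Weyl applied to the squared singular values followed by the additive singular-value perturbation bound, and then $\sqrt{a^2+b^2}\le a+b$ and evaluation at index $r+1$. You simply make the projector commutation and the index-range bookkeeping explicit, which the paper leaves implicit.
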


\begin{proof}
The orthogonal decomposition
\[
 I-P_i^{\rm cur}=(I-P_i)+(P_i-P_i^{\rm cur}),
\]
shows that the two terms in \eqref{eq:new-old-orthogonal} have orthogonal ranges.
Equation~\eqref{eq:new-old-gram} follows by expansion.  Its second term is
positive semidefinite and bounded above by $\|C_i^{\rm old}\|_2^2I$.
The eigenvalue min--max principle and Weyl's singular-value inequality
therefore give \eqref{eq:rank-selection-singular-bounds}.  Since
$\sqrt{a^2+b^2}\le a+b$ for $a,b\ge0$, these bounds imply
\eqref{eq:rank-selection-bracket}.  If
$r=\operatorname{rank}(N_i)<r_i$, then
$\sigma_{r+1}(N_i)=0$ and
$\sigma_{r+1}(\widehat R_i)\le d_i$.  Hence a retained $(r+1)$st direction
implies \eqref{eq:old-component-crosses-threshold}.
\end{proof}

For the Householder QR and SVD above, the perturbation in this proposition
can be bounded directly.  Suppose $\delta_i<1$ and
$\ell_i^{\rm qr}>0$.  Set
\begin{equation}
 \epsilon_i=
 \left[a_i^V\sqrt{\frac{1+\delta_i}{1-\delta_i}}
       +a_i^H+a_i^{\rm sv}(1+a_i^H)\right]\|R_i\|_2.
 \label{eq:rank-decision-error}
\end{equation}
Indeed, take $\widehat R_i=Q_{i,\perp}(G_i+E_i^{\rm sv})$.
Its nonzero singular values are those used for the rank decision, with zeros
added if needed.  The QR backward error gives
\[
 \|P_i^{\rm cur}-Q_iQ_i^T\|_2
 \le\frac{\|E_i^V\|_2}{\sigma_{\min}(\widetilde V_i)}
 \le a_i^V\sqrt{\frac{1+\delta_i}{1-\delta_i}}.
\]
Here both projectors have rank $r_i$, and
$(I-Q_iQ_i^T)\widetilde V_i=-(I-Q_iQ_i^T)E_i^V$.
Subtracting $\overline R_i$ from $\widehat R_i$ leaves
\[
 (P_i^{\rm cur}-Q_iQ_i^T)R_i
 +(I-Q_iQ_i^T)E_i^H+Q_{i,\perp}E_i^{\rm sv}.
\]
The reflector bound and $\|G_i\|_2\le(1+a_i^H)\|R_i\|_2$ prove
\eqref{eq:rank-decision-error}.

The singular values of $N_i$ describe the residual directions orthogonal
to the computed space. Adding the positive semidefinite matrix
$(C_i^{\rm old})^TC_i^{\rm old}$ to $N_i^TN_i$ can increase the selected
rank. The component in the existing space can therefore prevent a reduction
in block size even when $N_i$ has lost numerical rank.
Every singular value of $N_i$ larger than $\tau_i+\epsilon_i$ is retained,
with the local error determining the uncertainty at the threshold.

\begin{lemma}[Growth bound for the coupling matrices]
\label{lem:coefficient-envelope}
Fix $k\ge1$.  For $1\le i\le k+1$, set
\[
 b_i=\|B_i\|_2,
 \qquad b_1=0.
\]
For $1\le i\le k$, set
\[
 \delta_i=\|\widetilde V_i^T\widetilde V_i-I\|_2,
 \qquad s_i=\sqrt{1+\delta_i},
\]
and set $s_0=0$.  Let $\sigma_i=(1+a_i^S)(1+a_i^H)$.  Suppose that, for
$1\le i\le k$,
\begin{align*}
 \|A_i\|_2&\le(1+\delta_i)\|A\|_2+\eps_i^\alpha,\\
 \|R_i\|_2&\le s_i\|A\|_2+s_{i-1}b_i
                   +s_i\|A_i\|_2+\eps_i^R,\\
 b_{i+1}&\le\sigma_i\|R_i\|_2.
\end{align*}
Assume also that $a_i^S$, $a_i^H$, $a_i^\alpha$, $r_i^A$, and $r_i^B$ are
nonnegative and satisfy
\[
 \eps_i^\alpha\le a_i^\alpha\|A\|_2,
 \qquad
 \eps_i^R\le r_i^A\|A\|_2+r_i^B b_i.
\]
Define
\begin{equation}
 \alpha_i^{\rm env}=\sigma_i
 \left[s_i(2+\delta_i)+s_i a_i^\alpha+r_i^A\right],
 \qquad
 \beta_i^{\rm env}=\sigma_i(s_{i-1}+r_i^B).
 \label{eq:alpha-beta-envelope}
\end{equation}
Then, for $1\le i\le k$,
\begin{equation}
 b_{i+1}\le\alpha_i^{\rm env}\|A\|_2+\beta_i^{\rm env}b_i,
 \qquad
 b_{i+1}\le\|A\|_2\sum_{j=1}^{i}
   \alpha_j^{\rm env}\prod_{\ell=j+1}^{i}\beta_\ell^{\rm env},
 \label{eq:affine-envelope}
\end{equation}
where an empty product equals one.
\end{lemma}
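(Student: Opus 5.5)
The plan is to substitute the hypotheses into one another so that the three given inequalities collapse into a single affine recursion in $b_i$, and then to unroll that recursion by induction on $i$. Throughout, $\|A\|_2$ is a fixed constant and every coefficient is nonnegative, so each monotone substitution is legitimate.

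For the one-step bound, fix $1\le i\le k$. Start from $b_{i+1}\le\sigma_i\|R_i\|_2$. Insert the residual bound
\[
\|R_i\|_2\le s_i\|A\|_2+s_{i-1}b_i+s_i\|A_i\|_2+\eps_i^R .
\]
Then replace $\|A_i\|_2$ by $(1+\delta_i)\|A\|_2+\eps_i^\alpha$, and use $\eps_i^\alpha\le a_i^\alpha\|A\|_2$. Since $s_i\ge0$, this gives
\[
s_i\|A_i\|_2\le s_i(1+\delta_i)\|A\|_2+s_ia_i^\alpha\|A\|_2 .
\]
Next use $\eps_i^R\le r_i^A\|A\|_2+r_i^Bb_i$. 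Collecting the coefficient of $\|A\|_2$ gives $s_i+s_i(1+\delta_i)+s_ia_i^\alpha+r_i^A=s_i(2+\delta_i)+s_ia_i^\alpha+r_i^A$. The coefficient of $b_i$ is $s_{i-1}+r_i^B$. Multiplying by $\sigma_i\ge0$ yields exactly
\[
b_{i+1}\le\alpha_i^{\rm env}\|A\|_2+\beta_i^{\rm env}b_i .
\]
At $i=1$ the conventions $s_0=0$ and $b_1=0$ make the $b_1$ terms vanish, so the case needs no separate treatment.

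For the closed form, I would induct on $i$. At $i=1$, $b_1=0$ gives $b_2\le\alpha_1^{\rm env}\|A\|_2$, which is the sum with the single term $j=1$ and an empty product. Assume the bound holds for $b_i$. Since $\beta_i^{\rm env}\ge0$, the one-step bound gives
\[
b_{i+1}\le\alpha_i^{\rm env}\|A\|_2+\beta_i^{\rm env}\,\|A\|_2\sum_{j=1}^{i-1}\alpha_j^{\rm env}\prod_{\ell=j+1}^{i-1}\beta_\ell^{\rm env}.
\]
Absorbing $\beta_i^{\rm env}$ into each product extends it to $\ell=i$. The term $\alpha_i^{\rm env}$ is the $j=i$ summand with an empty product, which gives the stated sum.

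The only delicate point is checking that every coefficient is nonnegative, so that the inequalities may be substituted and multiplied. This holds because $\delta_i\ge0$ (it is a norm), $s_i\ge0$, and $a_i^S,a_i^H,a_i^\alpha,r_i^A,r_i^B\ge0$ by hypothesis, so $\sigma_i$, $\alpha_i^{\rm env}$ and $\beta_i^{\rm env}$ are all nonnegative. Beyond that, the proof is bookkeeping of the $\|A\|_2$ coefficient, where $s_i(2+\delta_i)$ comes from adding the direct term $s_i$ to $s_i(1+\delta_i)$.
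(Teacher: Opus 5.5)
Your proposal is correct and follows essentially the same route as the paper. The paper likewise substitutes the diagonal-block bound and the residual bound, together with the $\varepsilon_i^\alpha$ and $\varepsilon_i^R$ estimates, into $b_{i+1}\le\sigma_i\|R_i\|_2$ to obtain the one-step affine inequality, and then iterates it from $b_1=0$; your nonnegativity check and explicit induction simply write out what the paper leaves implicit.
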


\begin{proof}
Substituting the residual estimate and
$\|A_i\|_2\le(1+\delta_i)\|A\|_2+\eps_i^\alpha$ into
$\|B_{i+1}\|_2\le\sigma_i\|R_i\|_2$ gives
$b_{i+1}\le\alpha_i^{\rm env}\|A\|_2+\beta_i^{\rm env}b_i$, the first
inequality in \eqref{eq:affine-envelope}.  Iterating this one-step inequality
from $b_1=0$ gives the second inequality in \eqref{eq:affine-envelope}, which
is the stated product formula.
\end{proof}

\begin{remark}[Accumulation over several block steps]
Define
\[
 \Pi_k=\max_{1\le j\le i\le k}
 \prod_{\ell=j}^{i}\max\{1,\beta_\ell^{\rm env}\}.
\]
Lemma~\ref{lem:coefficient-envelope} gives polynomial growth in $k$ when
$\Pi_k$ and $\max_{i\le k}\alpha_i^{\rm env}$ have polynomial bounds.
Bounded values of both quantities give linear growth.  For a constant
$C_0\ge0$, a sufficient condition for $\Pi_k\le e^{C_0}$ is
\[
 \sum_{\ell=1}^k\log\max\{1,\beta_\ell^{\rm env}\}\le C_0.
\]
For constants $c,\varepsilon_*\ge0$, if $\beta_i^{\rm env}\le1+c\varepsilon_*$ and
$ck\varepsilon_*\le C_0$, then $\Pi_k\le e^{C_0}$.  If a concrete rounding-error
analysis gives $\beta_i^{\rm env}\le1+cu$ uniformly and $ku$ is bounded, then the
cumulative product is bounded.  Obtaining a local perturbation of order $u$
also requires a relative threshold of order $u$ and bounded coefficient
norms.  The cumulative product records how these local estimates combine.
\end{remark}

\subsection{An orthonormal basis for an enlarged symmetric problem}
\label{sec:orthonormalized}

Concatenating the local recurrences gives a matrix relation for all computed
blocks.  Fix $k\ge1$ with positive retained ranks $r_1,\ldots,r_k$, and put
\[
 m=\sum_{i=1}^k r_i,\qquad
 \widetilde V=[\,\widetilde V_1\ \cdots\ \widetilde V_k\,]
    \in\R^{n\times m},\qquad
 \widetilde F=[\,\widetilde F_1\ \cdots\ \widetilde F_k\,]
    \in\R^{n\times m}.
\]
For $1\le j\le k$, let $E_j\in\R^{m\times r_j}$ be the canonical injection
into block $j$.  Let $T=T^T\in\R^{m\times m}$ be the original variable-block
tridiagonal matrix.  Write its final term as
$\widetilde W B_{k+1}E_k^T$, where
\[
 \widetilde W\in\R^{n\times s},\qquad
 B_{k+1}\in\R^{s\times r_k},\qquad 0\le s\le r_k.
\]
The value $s=0$ represents terminal rank loss.  The computed block recurrence,
written in matrix form, is
\[
 A\widetilde V=\widetilde VT+\widetilde W B_{k+1}E_k^T+\widetilde F.
\]
For $s>0$, Lemma~\ref{lem:thresholded-factorization} gives
\begin{equation}
 \|\widetilde W^T\widetilde W-I_s\|_2<\frac12.
 \label{eq:boundary-gram}
\end{equation}
Thus $\widetilde W$ has full column rank.  For $s=0$, use the empty-matrix
convention.
For $1\le i\le k$, define
\[
 D_i=(\widetilde V_i^T\widetilde V_i)^{1/2},\qquad
 V_i=\widetilde V_iD_i^{-1},\qquad
 D=\diag(D_1,\ldots,D_k).
\]
Orthonormalize the boundary block by defining
\[
 D_{k+1}=(\widetilde W^T\widetilde W)^{1/2},\qquad
 W=\widetilde W D_{k+1}^{-1}\in\R^{n\times s},\qquad
 H=D_{k+1}B_{k+1}D_k^{-1}\in\R^{s\times r_k}.
\]
For $s=0$, these are the corresponding empty matrices.  Set
$V=[\,V_1\ \cdots\ V_k\,]\in\R^{n\times m}$, leave $T$ unchanged, and define
\begin{equation}
 F=V(DTD^{-1}-T)+\widetilde FD^{-1}\in\R^{n\times m}.
 \label{eq:polar-transport}
\end{equation}
Substitution gives
\begin{equation}
 AV=VT+WHE_k^T+F,
 \qquad V_i^TV_i=I_{r_i},\quad W^TW=I_s.
 \label{eq:collected-block}
\end{equation}
If
\[
 \delta=\max_{1\le i\le k+1}
 \|\widetilde V_i^T\widetilde V_i-I\|_2<1/2,
 \qquad \widetilde V_{k+1}:=\widetilde W,
\]
where the last term is zero for $s=0$, and
\[
 \chi=\frac{\delta}
 {\sqrt{1-\delta}(1+\sqrt{1-\delta})},
\]
then
\[
 1+\chi=\frac{1}{\sqrt{1-\delta}}.
\]
Spectral calculus gives
\[
 \|D_i^{-1}-I\|_2\le\chi,
 \qquad \|D_i^{-1}\|_2\le1+\chi
 \quad(1\le i\le k+1),
\]
and
\begin{equation}
 \|F\|_2\le 2\sqrt{k}\,\chi\|T\|_2
       +\frac{\|\widetilde F\|_2}{\sqrt{1-\delta}}.
 \label{eq:normalized-F}
\end{equation}
Indeed,
\[
 DTD^{-1}-T=(D-I)TD^{-1}+T(D^{-1}-I),
\]
where $\|D-I\|_2\le\chi\sqrt{1-\delta}$ and
$\|D^{-1}\|_2\le1/\sqrt{1-\delta}$.  Hence this difference has norm at most
$2\chi\|T\|_2$.  Since $\|V\|_2\le\sqrt{k}$, the first term in
\eqref{eq:polar-transport} gives the first term in
\eqref{eq:normalized-F}.  In addition,
$\|\widetilde FD^{-1}\|_2\le\|\widetilde F\|_2/\sqrt{1-\delta}$.
Set $B_1=0$ and $q_0=0$, and define
\begin{equation}
 \overline q_i=(1+\chi)q_{i-1}
 +(1+\chi)(1+\delta)\chi\|B_i\|_2.
 \label{eq:transported-adjacent-envelope}
\end{equation}
Expansion of $D_{i-1}^{-1}$ and $D_i^{-1}$ around the identity gives
\begin{equation}
 \max\left\{
 \|B_iV_{i-1}^TV_i\|_2,
 \|V_{i-1}^TV_iB_i\|_2
 \right\}\le\overline q_i.
 \label{eq:transported-adjacent-bound}
\end{equation}

Each normalized block has orthonormal columns.  The cross-products between
different blocks are collected in the strictly block upper triangular
matrix $U$, with $U_{ij}=V_i^TV_j$ for $i<j$.  Thus
\begin{equation}
 V^TV=I+U+U^T.
 \label{eq:U-def}
\end{equation}
Put $a=V^TW$ and
\begin{equation}
 M=TU-UT-aHE_k^T.
 \label{eq:M-def}
\end{equation}
For interior diagonal blocks,
\begin{equation}
 M_{ii}=B_iV_{i-1}^TV_i-V_i^TV_{i+1}B_{i+1},
 \qquad i<k.
 \label{eq:M-diag}
\end{equation}
For the terminal block, with $B_1V_0^TV_1$ interpreted as zero,
\begin{equation}
 M_{kk}=B_kV_{k-1}^TV_k-V_k^TWH.
 \label{eq:M-terminal-diag}
\end{equation}
If $q_k$ bounds the final adjacent product from
\eqref{eq:two-orders}, then the factors involving $D_{k+1}$ cancel.  The
definition of $H$ places $D_{k+1}$ on the left, giving
\begin{equation}
 V_k^TWH
 =D_k^{-1}(\widetilde V_k^T\widetilde W B_{k+1})D_k^{-1},
 \qquad
 \|V_k^TWH\|_2\le(1+\chi)^2q_k.
 \label{eq:terminal-bound}
\end{equation}
Define
\begin{equation}
 \zeta=\max_{1\le i\le k}\|M_{ii}\|_2.
 \label{eq:zeta-definition}
\end{equation}
The final diagonal block is therefore estimated with
\eqref{eq:M-terminal-diag}.  Together with
\eqref{eq:transported-adjacent-envelope}, the preceding identities give a
bound for the diagonal blocks of $M$,
\begin{equation}
 \zeta\le
 \max\left\{
  \max_{1\le i<k}(\overline q_i+\overline q_{i+1}),
  \overline q_k+(1+\chi)^2q_k
 \right\},
 \label{eq:zeta-bound}
\end{equation}
where the interior maximum is interpreted as zero when $k=1$.

Greenbaum introduced an enlarged exact problem for a perturbed Lanczos
recurrence \cite{Greenbaum1989}.  Paige subsequently gave an augmented
Hermitian model for finite-precision Lanczos and used it to study eigenvalue
and linear-system computations \cite{Paige2010Augmented,Paige2019,Paige2024}.  The
definitions of $\widehat V$ and $\widehat W$ below are adapted from Chen's
closed-form scalar construction and extend it to blocks whose sizes and
terminal rank may change \cite[Sec.~4]{Chen2026Simple}.

We use the usual empty-matrix convention when $s=0$.  The notation
$\col(X_1,\ldots,X_j)$ means vertical concatenation, and $I_k\otimes A$ is
the block diagonal matrix with $k$ copies of $A$.

\begin{figure}[t]
  \centering
  \resizebox{\textwidth}{!}{\begin{tikzpicture}[
  x=1mm,y=1mm,
  font=\sffamily\footnotesize,
  >=Latex,
  proc/.style={draw=black, line width=0.8pt, rounded corners=1pt,
               fill=white, text width=30mm, minimum height=16mm,
               align=center, inner sep=1.5mm},
  relation/.style={draw=black, line width=0.8pt, rounded corners=1pt,
                   fill=white, text width=33mm, minimum height=17mm,
                   align=center, inner sep=1.5mm},
  result/.style={draw=black, line width=0.8pt, rounded corners=1pt,
                 fill=white, text width=36mm, minimum height=17mm,
                 align=center, inner sep=1.5mm},
  final/.style={draw=black, line width=0.8pt, rounded corners=1pt,
                fill=white, text width=39mm, minimum height=17mm,
                align=center, inner sep=1.5mm},
  note/.style={draw=black, line width=0.7pt, rounded corners=1pt,
               fill=white, text width=30mm, minimum height=17mm,
               align=center, inner sep=1.5mm},
  heading/.style={font=\sffamily\bfseries\scriptsize, text=black},
  arr/.style={-{Latex[length=2mm,width=1.25mm]}, line width=0.8pt, draw=black},
  note-arr/.style={-{Latex[length=1.8mm,width=1.1mm]}, dashed,
                   line width=0.7pt, draw=black}
]

\node[heading] at (19,39) {ONE COMPUTED BLOCK STEP};
\node[proc] (qr) at (0,26)
  {Householder QR of $\widetilde V_i$\\apply the reflectors\\to $R_i$};
\node[proc] (svd) at (0,2)
  {compact SVD of the projected residual\\discard singular values at most $\tau_i$};
\node[relation] (local) at (38,2)
  {$A\widetilde V_i=\widetilde V_{i-1}B_i^T+\widetilde V_iA_i$\\
   $+\widetilde V_{i+1}B_{i+1}+\widetilde F_i$};

\node[heading] at (105,-15) {COLLECTED MATRIX RELATIONS};
\node[note] (tail) at (0,-30)
  {the discarded SVD term is included in $\widetilde F_i$\\
   no division by $\sigma_{\min}(B_{i+1})$};
\node[result] (normalize) at (38,-30)
  {$V_i=\widetilde V_i(\widetilde V_i^T\widetilde V_i)^{-1/2}$\\
   the symmetric matrix $T$\\is unchanged};
\node[relation] (gram) at (79,-30)
  {$V^TV=I+U+U^T$\\$M=TU-UT-(V^TW)HE_k^T$};
\node[result] (stack) at (123,-30)
  {$C=(I+U)^{-1}$, $K=I-C$\\
   form $\widehat V$ and $\widehat W$ from powers of $K$};
\node[final] (complete) at (171,-30)
  {choose symmetric $\Delta$ so that\\
   $(I_k\otimes A+\Delta)\widehat V=\widehat VT+\widehat WHE_k^T$};

\draw[arr] (qr.south) -- (svd.north);
\draw[arr] (svd.east) -- (local.west);
\draw[arr] (local.south) -- (normalize.north);
\draw[arr] (normalize.east) -- (gram.west);
\draw[arr] (gram.east) -- (stack.west);
\draw[arr] (stack.east) -- (complete.west);
\draw[note-arr] (svd.south) -- (tail.north);

\end{tikzpicture}
}
  \caption{From the local QR and SVD error bounds to the enlarged symmetric
  problem.  The discarded singular directions contribute to
  $\widetilde F_i$.  Normalizing the blocks and using their Gram matrix
  then gives the exact relation in Theorem~\ref{thm:block-global}.}
  \label{fig:block-architecture}
\end{figure}

\begin{theorem}[An exact block Lanczos relation for $I_k\otimes A+\Delta$]
\label{thm:block-global}
Let $A=A^T\in\R^{n\times n}$ and let $k\ge1$.  Choose positive block sizes
$r_1,\ldots,r_k$ and put $m=\sum_{i=1}^k r_i$.  For $1\le i\le k$, let
$E_i\in\R^{m\times r_i}$ be the canonical injection into block $i$.
Let $T=T^T\in\R^{m\times m}$ be block tridiagonal with respect to this
partition.  Let $0\le s\le r_k$ and suppose
$V=[V_1\ \cdots\ V_k]\in\R^{n\times m}$,
$W\in\R^{n\times s}$, $H\in\R^{s\times r_k}$, and
$F\in\R^{n\times m}$ satisfy $V_i^TV_i=I_{r_i}$, $W^TW=I_s$, and
\begin{equation}
 AV=VT+WHE_k^T+F.
 \label{eq:block-global-hypothesis}
\end{equation}
Use $U$, $a$, and $M$ from \eqref{eq:U-def}--\eqref{eq:M-def}.  Set
$e=\|F\|_2$ and
\[
 \zeta=\max_{1\le i\le k}\|M_{ii}\|_2,
 \qquad \mu=k\max\{2e,\zeta\}.
\]

Then there are matrices
\[
 \widehat V\in\R^{nk\times m},
 \qquad \widehat W\in\R^{nk\times s},
 \qquad \Delta=\Delta^T\in\R^{nk\times nk}
\]
such that
\begin{equation}
 \widehat V^T\widehat V=I_m,
 \qquad \widehat V^T\widehat W=0,
 \qquad \|\widehat W\|_2\le1,
 \label{eq:dilated-gram}
\end{equation}
and
\begin{equation}
 (I_k\otimes A+\Delta)\widehat V
 =\widehat VT+\widehat WHE_k^T.
 \label{eq:nearby-model}
\end{equation}
The first block satisfies
\begin{equation}
 \widehat V E_1=\col(V_1,0,\ldots,0),
 \label{eq:first-block-preserved}
\end{equation}
and
\begin{equation}
 \|\Delta\|_2
 \le2\left(2\sqrt{k}\,e+4\sqrt{k}(\sqrt{k}+k-1)\mu\right).
 \label{eq:Delta-bound}
\end{equation}
\end{theorem}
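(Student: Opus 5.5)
The plan is a Paige/Chen-type dilation. First construct $\widehat V$ from the Gram structure alone so that \eqref{eq:dilated-gram} and \eqref{eq:first-block-preserved} hold. Then define $\Delta$ by a symmetric completion of the residual of \eqref{eq:nearby-model}. Finally bound that residual using $F$ and the diagonal blocks of $M$.

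\emph{Step 1 (orthonormal lift).} Since $U$ is strictly block upper triangular with $k$ blocks, $U^k=0$. Put $C=(I+U)^{-1}=\sum_{j=0}^{k-1}(-U)^j$ and $K=I-C=UC$, both block upper triangular, with $CE_1=E_1$ and $KE_1=0$. From \eqref{eq:U-def},
\[
 C^TV^TVC=C^T(C^{-1}+C^{-T}-I)C=C+C^T-C^TC=I-K^TK .
\]
I would therefore take the first $n$-row block of $\widehat V$ to be $VC$ and fill the remaining $(k-1)n$ rows with a matrix $Z$ satisfying $Z^TZ=K^TK$. Following Chen's scalar construction, I would assemble $Z$ from the block rows of the powers $K,K^2,\dots,K^{k-1}$, placed against copies of $V$ in the lower block rows. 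I would check $Z^TZ=K^TK$ by the same telescoping identity used for $C$, exploiting $K^k=0$. Because $KE_1=0$, this gives \eqref{eq:first-block-preserved} immediately. The boundary block $\widehat W$ is built the same way from $W$ and $a=V^TW$: take $\col(W,\dots)$ minus its component along $\widehat V$, so that $\widehat V^T\widehat W=0$. The bound $\|\widehat W\|_2\le1$ then follows because $I-\widehat V\widehat V^T$ is a contraction applied to an isometric lift of $W$.

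\emph{Step 2 (symmetric completion).} Set $G=(I_k\otimes A)\widehat V-\widehat VT-\widehat WHE_k^T$. Using $\widehat V^T\widehat W=0$ and the symmetry of $A$ and $T$, the matrix $\widehat V^TG=\widehat V^T(I_k\otimes A)\widehat V-T$ is symmetric. Hence
\[
 \Delta=-G\widehat V^T-\widehat VG^T+\widehat V(\widehat V^TG)\widehat V^T
\]
is symmetric and satisfies $\Delta\widehat V=-G$, which proves \eqref{eq:nearby-model}. Writing $\Delta=-(I-\widehat V\widehat V^T)G\widehat V^T-\widehat VG^T$ shows $\|\Delta\|_2\le2\|G\|_2$. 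This accounts for the outer factor $2$ in \eqref{eq:Delta-bound}.

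\emph{Step 3 (bounding $G$).} This is the main obstacle. Substituting \eqref{eq:block-global-hypothesis}, the first block row of $G$ is $FC+V(TC-CT)+(\text{boundary terms})$. The other block rows are of the form $V(TK^j-K^jT)$ plus $F$ terms. The commutators reduce to $M$ through $TC-CT=C(UT-TU)C=-C(M+aHE_k^T)C$, and the $aHE_k^T$ part is absorbed by the construction of $\widehat W$. The $F$ contributions, one per block row with $\|V\|_2\le\sqrt k$, should give the $2\sqrt k\,e$ term. For the $M$ terms I would avoid bounding $\|U\|$ directly, since $U$ need not be small after loss of orthogonality. Instead I would invoke the block Paige identity (Theorem~\ref{thm:block-paige}), which expresses the strictly upper blocks of $U$-weighted products through $F$ and the diagonal blocks $M_{ii}$. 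This should control each relevant product by a multiple of $\max\{2e,\zeta\}$, with the multiple $k$ coming from summing over blocks, so by $\mu$. Collecting the contributions from the $k-1$ powers of $K$ (each weighted by $\sqrt k$ from $\|V\|_2$) then produces the factor $4\sqrt k(\sqrt k+k-1)\mu$. The delicate point is ensuring that products $K^jT$ never produce terms beyond what the Paige identity converts into $M$ and $F$, i.e., that no uncontrolled powers of $\|U\|$ survive.
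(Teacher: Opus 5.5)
\textbf{There is a gap in how you choose $\widehat W$, and a second problem in where Step~3 looks for size control.}

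Your Step~1 for $\widehat V$ (read as $\col(VC,VCK,\ldots,VCK^{k-1})$) is right, and so is your Step~2: the symmetric completion with $\|\Delta\|_2\le2\|G\|_2$ matches the paper. The argument breaks at the boundary block. Orthogonality to $\widehat V$ and $\|\widehat W\|_2\le1$ do not determine $\widehat W$. What is needed is that $\widehat WHE_k^T$ cancel exactly the boundary terms created when $T$ is commuted through $C$ and the powers of $K$. Projecting an isometric lift such as $\col(W,0,\ldots,0)$ off $\widehat V$ does not achieve this.

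Here is a concrete failure. Take $k=2$, scalar blocks, $A=\diag(1,0)$, $v_1=(1,0)^T$, $v_2=(0.6,0.8)^T$, $T=\diag(1,0.36)$, $W=(0.8,-0.6)^T$, $H=0.48$, and $F=0$. The hypothesis holds and $M=0$, so $e=\mu=0$ and the theorem requires $\Delta=0$. Here $\widehat V$ has columns $(1,0,0,0)^T$ and $(0,0.8,0.6,0)^T$. The boundary block that works is $(0,-0.6,0.8,0)^T$. Your projection instead gives $(0,-0.216,0.288,0)^T$, which leaves a residual $G$ with $\|G\|_2=0.3072$. Any $\Delta$ satisfying the model relation with this $\widehat W$ then has $\|\Delta\|_2\ge\|\Delta\widehat V\|_2=0.3072$.

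The missing idea is to let the recurrence dictate $\widehat W$, and only afterwards verify its Gram properties.
\begin{itemize}
\item With $L=CMC$ one has $TC-CT=-L-CaHE_k^T$. This forces the first block to be $Y=W-VS$ with $S=Ca$.
\item The exchange identity $TK=KT+SHE_k^T+L$, together with $E_k^TC=E_k^T$ and $E_k^TK=0$, then forces $\widehat W=\col(Y,VCS,VCKS,\ldots,VCK^{k-2}S)$.
\item The residual that remains is $\col(P_0,\ldots,P_{k-1})$, with $P_0=FC-VL$ and $P_{j+1}=VCK^jL+P_jK$.
\item Orthogonality and $\|\widehat W\|_2\le1$ follow from $(VC)^TY=-K^TS$ and $S^TS+Y^TY=I$, by telescoping with $K^k=0$.
\end{itemize}

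Your Step~3 does not need the block Paige identity. That identity only gives the scalar forms $s_j^TMs_i$. In the paper it relies on the bound $\|M\|_2\le\mu$ established in this proof.
\begin{itemize}
\item A large $\|U\|_2$ is harmless. Your own identity $C^TV^TVC=I-K^TK\succeq0$ gives $\|K\|_2\le1$ and $\|C\|_2\le2$.
\item The bound $\|M\|_2\le\mu$ holds because $M$ is block upper triangular. This follows from block tridiagonal $T$, strictly block upper $U$, and the boundary term lying in the last block column.
\item The identity $M-M^T=V^TF-F^TV$ then bounds the strictly upper blocks of $M$ by $2e$, and the diagonal blocks are bounded by $\zeta$.
\end{itemize}
Hence $\|L\|_2\le4\mu$ and $\|P_j\|_2\le2e+4(\sqrt k+j)\mu$, which sums to the stated bound on $\|\Delta\|_2$.
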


\begin{proof}
Premultiplying \eqref{eq:block-global-hypothesis} by $V^T$ and subtracting the
transpose of the resulting identity, then using $A=A^T$, $T=T^T$,
$V^TV=I+U+U^T$, and the definition of $M$, gives
\begin{equation}
 M-M^T=V^TF-F^TV.
 \label{eq:block-commutator}
\end{equation}
The block-tridiagonal structure of $T$ and the strictly block upper-triangular
structure of $U$ imply that $TU-UT$ is block upper triangular.
The boundary term has nonzero blocks only in its last block column, so
$M$ is also block upper triangular. Each
strict upper block is therefore bounded by $2e$.  The block row-sum estimate
gives $\|M\|_2\le\mu$.

Define $C=(I+U)^{-1}$, $K=I-C$, $S=Ca$, and $Y=W-VS$.
Here $C,K\in\R^{m\times m}$, $S\in\R^{m\times s}$, and $Y\in\R^{n\times s}$.
Nilpotence gives $U^k=0$, so $C$ is a finite Neumann polynomial and
$K=UC=CU$.  Direct multiplication yields
\begin{equation}
\begin{aligned}
 (VC)^TVC&=I-K^TK,\\
 (VC)^TY&=-K^TS,\\
 S^TS+Y^TY&=I.
\end{aligned}
\label{eq:gram-triple}
\end{equation}
Positive semidefiniteness of $V^TV$ implies $\|K\|_2\le1$ and
$\|C\|_2\le2$.  Set $L=CMC$.  Substitution into the definitions of $C$, $K$,
$S$, and $M$ gives
\begin{equation}
 TK=KT+SHE_k^T+L,
 \qquad \|L\|_2\le4\mu.
 \label{eq:exchange}
\end{equation}

Form the block-column matrices
\begin{equation}
\begin{split}
 \widehat V&=\col(VC,VCK,\ldots,VCK^{k-1}),\\
 \widehat W&=\col(Y,VCS,VCKS,\ldots,VCK^{k-2}S).
\end{split}
\label{eq:dilation}
\end{equation}
For $k=1$, the second definition reads $\widehat W=Y$.  Since $U$ is
strictly block upper triangular, $UE_1=0$.  Hence $CE_1=E_1$ and $KE_1=0$.
It follows that $\widehat V E_1=\col(V_1,0,\ldots,0)$, which proves
\eqref{eq:first-block-preserved}.
Thus the enlarged relation begins with the computed starting block, so
spectral information derived from $T$ refers to that same initial block.
The identities in \eqref{eq:gram-triple} telescope and prove
\eqref{eq:dilated-gram}.  Repeated use of \eqref{eq:exchange} in
\eqref{eq:block-global-hypothesis} gives
\begin{equation}
 (I_k\otimes A)\widehat V
 =\widehat VT+\widehat WHE_k^T+\widehat F.
 \label{eq:dilated-recurrence}
\end{equation}
To bound $\widehat F$, write $X=VC$ and define
\[
 P_0=FC-VL,
 \qquad
 P_{j+1}=XK^jL+P_jK\quad(0\le j<k-1).
\]
The last block row of $K$ is zero, so $E_k^TK=0$.  Similarly,
$E_k^TC=E_k^T$.  Equation~\eqref{eq:exchange} now shows that
$\widehat F=\col(P_0,\ldots,P_{k-1})$.  The bounds
$\|K\|_2\le1$, $\|C\|_2\le2$, $\|V\|_2\le\sqrt{k}$, and
$\|L\|_2\le4\mu$, together with
$\|X\|_2\le1$ from \eqref{eq:gram-triple}, yield
\[
 \|P_j\|_2\le2e+4(\sqrt{k}+j)\mu.
\]
Consequently,
\begin{equation}
 \|\widehat F\|_2
 \le2\sqrt{k}\,e+4\sqrt{k}(\sqrt{k}+k-1)\mu.
 \label{eq:Fhat-bound}
\end{equation}

Put $R=-\widehat F$.  Premultiplying \eqref{eq:dilated-recurrence} by
$\widehat V^T$, and using \eqref{eq:dilated-gram} together with the symmetry
of $I_k\otimes A$ and $T$, shows that $\widehat V^T\widehat F$, and hence
$\widehat V^TR$, is symmetric.  Define
\begin{equation}
 \Delta=R\widehat V^T+\widehat VR^T
 -\widehat V(\widehat V^TR)\widehat V^T.
 \label{eq:symmetric-completion}
\end{equation}
The matrix $\Delta$ is symmetric and $\Delta\widehat V=R$.  Since
$\widehat V^TR$ is symmetric, it can also be written as
\[
 \Delta=R\widehat V^T+
 \widehat VR^T(I-\widehat V\widehat V^T).
\]
Both factors involving $\widehat V$ have norm at most one, so
$\|\Delta\|_2\le2\|R\|_2$.  Equations
\eqref{eq:nearby-model}--\eqref{eq:Delta-bound} follow.
\end{proof}

\begin{corollary}[Factorization of the final boundary block]
\label{cor:block-boundary-factorization}
Under the hypotheses of Theorem~\ref{thm:block-global}, choose matrices
$\widehat V$, $\widehat W$, and $\Delta$ that satisfy its conclusion.  Let
$t=\operatorname{rank}(\widehat WH)$ and choose a compact factorization
\[
 \widehat WH=\overline W\,\overline H,
 \qquad
 \overline W\in\R^{nk\times t},
 \qquad
 \overline H\in\R^{t\times r_k},
 \qquad
 \overline W^T\overline W=I_t.
\]
Then
\[
 \widehat V^T\overline W=0,
 \qquad
 \operatorname{rank}(\overline H)=t,
\]
and
\[
 (I_k\otimes A+\Delta)\widehat V
 =\widehat VT+\overline W\,\overline H E_k^T.
\]
If $\operatorname{rank}(B_i)=r_i$ for $2\le i\le k$, this relation is an
exact variable-block Lanczos recurrence through block $k$, up to an
orthogonal change of basis within each block.
\end{corollary}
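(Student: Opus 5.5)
The argument uses only the conclusion of Theorem~\ref{thm:block-global} and elementary linear algebra. First, since $\overline W$ has orthonormal columns spanning $\operatorname{range}(\widehat WH)$, write $\overline W=\widehat WH G$ for some $G\in\R^{r_k\times t}$. For example, take a compact SVD $\widehat WH=P\Sigma Q^T$ with $P\in\R^{nk\times t}$, $\Sigma$ positive diagonal, and set $\overline W=P$, $\overline H=\Sigma Q^T$, so $G=Q\Sigma^{-1}$. For an arbitrary compact factorization the same conclusion holds, because $\operatorname{range}(\overline W)=\operatorname{range}(\widehat WH)$, forced by $\overline W\,\overline H=\widehat WH$ and rank $t$. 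Then $\widehat V^T\overline W=\widehat V^T\widehat W\,HG=0$ by \eqref{eq:dilated-gram}. The rank statement follows from $t=\operatorname{rank}(\overline W\,\overline H)\le\operatorname{rank}(\overline H)\le t$.

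Next, substitute $\widehat WHE_k^T=\overline W\,\overline HE_k^T$ into \eqref{eq:nearby-model} to obtain the displayed relation.

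For the final assertion, I would read off the recurrence block by block. Since $\widehat V^T\widehat V=I_m$, the blocks $\widehat V_i=\widehat VE_i$ are orthonormal and mutually orthogonal, and $\overline W$ is orthogonal to all of them. Multiplying the relation by $E_i$ and using the block tridiagonal structure of $T$ gives the exact three-term block recurrence. Its coupling blocks are those of $T$, and the terminal coupling is $\overline H$, of full row rank $t$ (the case $t=0$ corresponds to exact termination).

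For it to be a genuine block Lanczos recurrence, each $\widehat V_{i+1}$ must be an orthonormal basis of the range of the residual $(I_k\otimes A+\Delta)\widehat V_i-\widehat V_{i-1}B_i^T-\widehat V_iA_i=\widehat V_{i+1}B_{i+1}$. This holds because $\operatorname{rank}(B_{i+1})=r_{i+1}$ makes $B_{i+1}$ surjective, so the residual range equals $\operatorname{range}(\widehat V_{i+1})$. The diagonal block equals $\widehat V_i^T(I_k\otimes A+\Delta)\widehat V_i$ by orthogonality. Any exact block Lanczos process with the same starting block, fed the same residual ranges, produces bases that agree up to an orthogonal matrix within each block, with the coefficients transformed accordingly. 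The main care point is matching the paper's coupling convention ($B_{i+1}$ is $r_{i+1}\times r_i$ with $T$ carrying $B_{i+1}$ below the diagonal and $B_{i+1}^T$ above) and checking that the hypothesis is stated for $2\le i\le k$, which covers all interior couplings of $T$. The terminal coupling $\overline H$ needs no hypothesis, because it has full rank by construction. This is exactly why the corollary replaces $\widehat W H$ by the compact factor $\overline W\,\overline H$.
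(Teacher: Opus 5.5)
Your proposal is correct and follows the paper's argument: orthogonality from $\operatorname{range}(\overline W)=\operatorname{range}(\widehat WH)\subseteq\operatorname{range}(\widehat W)$ together with $\widehat V^T\widehat W=0$, the rank count, and direct substitution into \eqref{eq:nearby-model}. You go slightly further than the paper in two places. You show the conclusion holds for an arbitrary compact factorization, whereas the paper constructs the specific one $\overline H=\overline W^T\widehat WH$. You also spell out the block-Lanczos uniqueness argument behind the final assertion, which the paper leaves implicit.
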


\begin{proof}
Choose the columns of $\overline W$ as an orthonormal basis for
$\operatorname{range}(\widehat WH)$ and set
$\overline H=\overline W^T\widehat WH$.  Then
$\widehat WH=\overline W\overline H$ and
$\operatorname{rank}(\overline H)=t$.  Since
$\operatorname{range}(\overline W)\subseteq
\operatorname{range}(\widehat W)$ and
$\widehat V^T\widehat W=0$, one has
$\widehat V^T\overline W=0$.  Substitution in
\eqref{eq:nearby-model} proves the recurrence.
\end{proof}

\subsection{Exact-arithmetic block Ritz clusters and deflation}

Repeated block Ritz values can occur in exact arithmetic while their
physical Ritz vectors remain independent.  The current projection and the
rank of the next block can both be expressed through a Gram matrix.
This description follows the established connection between block Krylov
spaces, spectral measures, and orthogonal polynomials
\cite{FenuMartinReichelRodriguez2013,RinelliVandebril2026}, in the setting
of the exact block Ritz clusters studied by \v{S}imonov\'a and Tich\'y
\cite{SimonovaTichy2025}.
Exact-arithmetic convergence estimates for Lanczos and block Lanczos are
given in \cite{Saad1980,LiZhang2015}.  The corresponding moment and
quadrature formulation is developed in \cite{GolubMeurant2010}.

Let $Q\in\R^{n\times d}$ satisfy $Q^TQ=I_d$, and let $E_A(\Omega)$ denote
the spectral projector of $A$ associated with a Borel set $\Omega\subset\R$.
Define the scalar probability measure
\[
 \nu_Q(\Omega)=\frac{1}{d}\operatorname{tr}
 \bigl(Q^TE_A(\Omega)Q\bigr).
\]
Suppose the orthonormal polynomials $\phi_0,\ldots,\phi_k$ for $\nu_Q$
exist, with positive leading coefficients, and let
$J_k$ be the $k\times k$ Jacobi matrix in their three-term recurrence.  Put
\[
 X=[\,\phi_0(A)Q\ \cdots\ \phi_{k-1}(A)Q\,],
 \qquad Y=\phi_k(A)Q,
\]
and define
\[
 S=X^TX,
 \qquad C=X^TY,
 \qquad D=Y^TY.
\]

\begin{proposition}[Block Ritz values and the next block]
\label{prop:exact-clusters-deflation}
Assume that $S$ is nonsingular.  Let $e_k$ be the $k$th coordinate vector in
$\R^k$.  If $\beta_k$ is the final recurrence coefficient for
$\phi_0,\ldots,\phi_k$ and
\[
 {\cal B}_k=\beta_k(e_k^T\otimes I_d),
\]
then
\begin{align}
 AX&=X(J_k\otimes I_d)+Y{\cal B}_k,
 \label{eq:polynomial-block-recurrence}\\
 S^{-1}X^TAX
 &=J_k\otimes I_d+S^{-1}C{\cal B}_k.
 \label{eq:block-ritz-boundary-correction}
\end{align}
Let $P_X=XS^{-1}X^T$ and $Y_\perp=(I-P_X)Y$.  Then
\begin{equation}
 Y_\perp^TY_\perp=D-C^TS^{-1}C.
 \label{eq:next-block-schur-complement}
\end{equation}
Consequently, if $C=0$, every eigenvalue of $J_k$ occurs $d$ times in the
block Ritz problem.  The rank of the next new block is
$\operatorname{rank}(D-C^TS^{-1}C)$.
\end{proposition}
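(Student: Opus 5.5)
The plan is to obtain \eqref{eq:polynomial-block-recurrence} column block by column block from the three-term recurrence of the orthonormal polynomials. Then \eqref{eq:block-ritz-boundary-correction} follows by projecting, and the Schur-complement identity by expanding a projector.

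\textbf{Step 1: the block recurrence.} Write the recurrence as
\[
 x\phi_{j-1}(x)=\beta_{j-1}\phi_{j-2}(x)+\alpha_j\phi_{j-1}(x)+\beta_j\phi_j(x),
 \qquad 1\le j\le k,
\]
with $\phi_{-1}=0$, so that $J_k$ has diagonal $\alpha_1,\ldots,\alpha_k$ and off-diagonal $\beta_1,\ldots,\beta_{k-1}$. Substituting $x=A$ and multiplying on the right by $Q$ gives, for the $j$th block column of $X$,
\[
 A\phi_{j-1}(A)Q=\beta_{j-1}\phi_{j-2}(A)Q+\alpha_j\phi_{j-1}(A)Q+\beta_j\phi_j(A)Q.
\]
For $j<k$ every term on the right is a block column of $X$, and the coefficients are exactly the $j$th column of $J_k$ tensored with $I_d$. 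For $j=k$ the final term is $\beta_kY$, and this is precisely $Y{\cal B}_k$, since ${\cal B}_k$ is supported on the last block column. This proves \eqref{eq:polynomial-block-recurrence}.

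\textbf{Step 2: the block Ritz matrix.} Premultiply \eqref{eq:polynomial-block-recurrence} by $X^T$ to get
\[
 X^TAX=S(J_k\otimes I_d)+C{\cal B}_k.
\]
Multiplying on the left by $S^{-1}$ gives \eqref{eq:block-ritz-boundary-correction}. The matrix $S^{-1}X^TAX$ is the representation of the Rayleigh--Ritz problem in the (non-orthonormal) basis $X$. Its eigenvalues therefore coincide with those of $Z^TAZ$ for an orthonormal $Z=XS^{-1/2}$, since the two matrices are similar via $S^{1/2}$. If $C=0$, this matrix equals $J_k\otimes I_d$, whose spectrum is that of $J_k$ with each eigenvalue repeated $d$ times.

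\textbf{Step 3: the next block.} $P_X$ is the orthogonal projector onto $\operatorname{range}(X)$, and this uses only the nonsingularity of $S$. Expanding gives
\[
 Y_\perp^TY_\perp=Y^T(I-P_X)Y=D-C^TS^{-1}C,
\]
which is \eqref{eq:next-block-schur-complement}. The new directions added by $\phi_k(A)Q$ beyond $\operatorname{range}(X)$ form $\operatorname{range}(Y_\perp)$. Since $\operatorname{rank}(Y_\perp)=\operatorname{rank}(Y_\perp^TY_\perp)$, the rank of the next new block is $\operatorname{rank}(D-C^TS^{-1}C)$.

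The main obstacle is only bookkeeping: matching the index conventions of $J_k$ and $\beta_k$ with the Kronecker ordering of the block columns, and noting that orthogonality of $\phi_j$ with respect to $\nu_Q$ is a trace condition. That condition does not make $X$ orthonormal, which is why $S$ and $C$ must be kept general. The argument therefore never uses $S=I$ and relies only on its invertibility.
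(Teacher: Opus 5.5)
Your proof is correct and follows the same route as the paper. In both, the recurrence applied to each column of $Q$ gives the block relation, premultiplying by $X^T$ and then by $S^{-1}$ gives the projected identity, and expanding $Y^T(I-P_X)Y$ gives the Schur complement, whose rank gives the size of the next block. Your two additions fill in details the paper leaves implicit: the similarity $S^{1/2}(S^{-1}X^TAX)S^{-1/2}=Z^TAZ$, which justifies calling $S^{-1}X^TAX$ the block Ritz problem, and the remark that trace orthogonality does not force $S=I$.
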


\begin{proof}
Applying the scalar three-term recurrence to each column of $Q$ gives
\eqref{eq:polynomial-block-recurrence}.  Premultiplication by $X^T$ and
multiplication by $S^{-1}$ give
\eqref{eq:block-ritz-boundary-correction}.  Since $P_X$ is the orthogonal
projector onto $\operatorname{range}(X)$,
\[
 Y_\perp^TY_\perp
 =Y^T(I-P_X)Y=D-C^TS^{-1}C.
\]
The remaining statements follow from
\eqref{eq:block-ritz-boundary-correction} and from the rank of the Gram
matrix $Y_\perp^TY_\perp$.
\end{proof}

The matrix $C$ gives the correction to the current block Ritz problem.
The Schur complement in \eqref{eq:next-block-schur-complement} measures the
new directions available at the next step.  When $C=0$, the repeated
values have independent lifted Ritz vectors because $X$ has full column
rank.  They are an exact-arithmetic feature of this projection.

Weyl's eigenvalue perturbation theorem applied to the enlarged symmetric
matrix shows that every Ritz value of $T$ lies in
\[
 [\lambda_{\min}(A)-\|\Delta\|_2,
   \lambda_{\max}(A)+\|\Delta\|_2].
\]
For $Ty=\theta y$ and $\|y\|_2=1$,
\begin{equation}
 \dist(\theta,\sigma(A))
 \le\|HE_k^Ty\|_2+\|\Delta\|_2.
 \label{eq:ritz-posteriori}
\end{equation}
Block Lanczos may have improper clusters away from every eigenvalue of $A$
\cite{SimonovaTichy2025}.  Localization of stabilized Ritz values therefore
requires the computable boundary term in \eqref{eq:ritz-posteriori}.

\subsection{Block Paige identities}
\label{sec:block-paige}

Paige's scalar identity relates loss of orthogonality to a Ritz residual
\cite{Paige1976}.  Its block analogue follows from
\eqref{eq:collected-block} and the symmetry of $A$ and $T$.
Use the normalized blocks of Section~\ref{sec:orthonormalized}, so that
$V_i^TV_i=I_{r_i}$, $W^TW=I_s$, and $m=\sum_{i=1}^k r_i$.
Here $s$ is the width of the final block.  With $M$ defined in
\eqref{eq:M-def}, the preceding proof gives
$\|M\|_2\le\mu=k\max\{2e,\zeta\}$, where $e=\|F\|_2$ and
$\zeta$ is defined in \eqref{eq:zeta-definition}.

\begin{theorem}[Block Paige identities]
\label{thm:block-paige}
Assume \eqref{eq:collected-block}.  Let $(\theta_i,s_i)$, $i=1,\ldots,m$, be
the eigenpairs of $T$ with orthonormal eigenvectors, and define the lifted
Ritz vectors, the block residual coordinates and the lost orthogonality by
\[
 y_i=Vs_i\in\R^n,\qquad
 w_i=HE_k^Ts_i\in\R^s,\qquad
 g_i=W^Ty_i\in\R^s .
\]
The full physical residual is $Ay_i-\theta_i y_i=Ww_i+Fs_i$.
For every $i$,
\begin{equation}
 g_i^Tw_i=-s_i^TMs_i,
 \qquad\text{so that}\qquad
 |g_i^Tw_i|\le\|M\|_2\le\mu,
 \label{eq:block-paige-diag}
\end{equation}
and, for every $i\ne j$,
\begin{equation}
 (\theta_j-\theta_i)\,y_j^Ty_i
 =g_j^Tw_i-g_i^Tw_j+s_j^T\bigl(V^TF-F^TV\bigr)s_i .
 \label{eq:block-paige-offdiag}
\end{equation}
Write $\rho_i=\|w_i\|_2$ for the boundary-residual norm.  Then
\begin{equation}
 \begin{aligned}
 \rho_i\,\bigl|\hat w_i^Tg_i\bigr|&\le\mu
       &&(\hat w_i=w_i/\rho_i,\ \rho_i>0),\\
 |y_j^Ty_i|&\le
 \frac{\|g_j\|_2\rho_i+\|g_i\|_2\rho_j
             +e(\|y_i\|_2+\|y_j\|_2)}{|\theta_j-\theta_i|}
       &&(\theta_i\ne\theta_j).
 \end{aligned}
 \label{eq:block-paige-consequences}
\end{equation}
\end{theorem}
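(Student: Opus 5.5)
The plan is to derive everything from one bilinear identity obtained by testing the collected relation \eqref{eq:collected-block} against Ritz vectors and exploiting the symmetry of $A$ and $T$.

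First, the residual formula is immediate. Multiplying \eqref{eq:collected-block} on the right by $s_i$ and using $Ts_i=\theta_is_i$ gives
\[
 Ay_i-\theta_iy_i=WHE_k^Ts_i+Fs_i=Ww_i+Fs_i .
\]

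Next, premultiply \eqref{eq:collected-block} by $V^T$. This gives
\[
 V^TAV=(I+U+U^T)T+V^TWHE_k^T+V^TF .
\]
The left side is symmetric, so subtracting the transpose yields
\[
 0=(U+U^T)T-T(U+U^T)+aHE_k^T-E_kH^Ta^T+V^TF-F^TV .
\]
With $M=TU-UT-aHE_k^T$ this reads $M-M^T=V^TF-F^TV$, which is \eqref{eq:block-commutator}. Rather than work from that form, I will sandwich the unsubtracted identity between eigenvectors. Computing $s_j^T(\cdot)s_i$ and using $s_j^TT=\theta_js_j^T$, I obtain
\[
 s_j^TV^TAVs_i=y_j^TAy_i .
\]
On the right-hand side,
\[
 s_j^T(I+U+U^T)Ts_i=\theta_i\,y_j^Ty_i,\qquad
 s_j^Ta\,HE_k^Ts_i=g_j^Tw_i .
\]
So
\[
 y_j^TAy_i=\theta_iy_j^Ty_i+g_j^Tw_i+s_j^TV^TFs_i .
\]
Swapping $i$ and $j$ and subtracting gives \eqref{eq:block-paige-offdiag} directly, because $y_j^TAy_i$ is symmetric in $i,j$.

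For the diagonal identity I cannot use that subtraction, since it is vacuous when $i=j$. Instead I evaluate $s_i^TMs_i$ directly. The two commutator terms give
\[
 s_i^TTUs_i-s_i^TUTs_i=\theta_is_i^TUs_i-\theta_is_i^TUs_i=0 .
\]
This leaves $s_i^TMs_i=-s_i^TaHE_k^Ts_i=-g_i^Tw_i$, which proves the first part of \eqref{eq:block-paige-diag}. The point to check carefully here is that $U$ itself is not symmetric, but it enters only through the commutator, which is annihilated by the eigenvector sandwich. For the bound, $|s_i^TMs_i|\le\|M\|_2\le\mu$ follows from the estimate $\|M\|_2\le\mu$ established in the proof of Theorem~\ref{thm:block-global}. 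I would re-derive that estimate here: $M$ is block upper triangular, its strict upper blocks are bounded by $2e$ via \eqref{eq:block-commutator}, its diagonal blocks are bounded by $\zeta$, and a block row-sum bound over $k$ blocks completes it. This is the one step needing care, so that the estimate does not depend on the dilation construction; the other steps are algebra.

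Finally, the consequences \eqref{eq:block-paige-consequences} follow from the two identities. Dividing the diagonal identity by $\rho_i$ gives $\rho_i|\hat w_i^Tg_i|\le\mu$. For the off-diagonal bound I apply Cauchy--Schwarz to \eqref{eq:block-paige-offdiag}. The two boundary terms give
\[
 |g_j^Tw_i|\le\|g_j\|_2\rho_i,\qquad |g_i^Tw_j|\le\|g_i\|_2\rho_j ,
\]
and the error term gives
\[
 |s_j^TV^TFs_i-s_j^TF^TVs_i|\le e\|y_j\|_2+e\|y_i\|_2 .
\]
Dividing by $|\theta_j-\theta_i|$ completes the proof.
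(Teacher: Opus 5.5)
Your proof is correct. The diagonal identity is argued exactly as in the paper, which takes the case $j=i$ of its core relation $(\theta_j-\theta_i)s_j^TUs_i=s_j^TMs_i+g_j^Tw_i$; there the commutator $TU-UT$ is annihilated by the eigenvector sandwich. Your bound $\|M\|_2\le\mu$ is also the paper's argument from the proof of Theorem~\ref{thm:block-global}: block upper triangularity, strict upper blocks bounded by $2e$, and a block row-sum estimate.

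You differ from the paper in how you obtain the off-diagonal identity. The paper antisymmetrizes its core relation in $i,j$. It then converts $s_j^T(U+U^T)s_i$ into $y_j^Ty_i$ using $s_j^Ts_i=0$, and invokes $M-M^T=V^TF-F^TV$ from \eqref{eq:block-commutator}. You instead take the inner product of the physical residual relation $Ay_i-\theta_iy_i=Ww_i+Fs_i$ with $y_j$ and use the symmetry of $y_j^TAy_i$, which is Paige's classical route.

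Your route is more elementary for this part. It needs neither $U$, $M$, the block-commutator identity, nor the orthogonality of $s_i$ and $s_j$. It also makes transparent that the off-diagonal identity involves only $F$ and not the adjacent-block products that enter $\zeta$. In your argument, \eqref{eq:block-commutator} is needed only for the $2e$ bound on the strict upper blocks of $M$. The paper's route instead gives a single identity from which both statements follow, and it keeps everything expressed through $M$, whose norm is already controlled in Theorem~\ref{thm:block-global}.
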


\begin{proof}
By the definition \eqref{eq:M-def}, $TU-UT=M+aHE_k^T$ with $a=V^TW$.  For
any two eigenvectors, $s_j^T(TU-UT)s_i=(\theta_j-\theta_i)\,s_j^TUs_i$, and
$s_j^TaHE_k^Ts_i=(Vs_j)^TW(HE_k^Ts_i)=g_j^Tw_i$.  Hence
\begin{equation}
 (\theta_j-\theta_i)\,s_j^TUs_i=s_j^TMs_i+g_j^Tw_i .
 \label{eq:block-paige-core}
\end{equation}
For $j=i$ the left side vanishes, which is \eqref{eq:block-paige-diag}.
For $i\ne j$, subtracting \eqref{eq:block-paige-core} with the roles of $i$ and
$j$ exchanged gives
$(\theta_j-\theta_i)\,s_j^T(U+U^T)s_i=s_j^T(M-M^T)s_i+g_j^Tw_i-g_i^Tw_j$.
By \eqref{eq:U-def}, $s_j^T(U+U^T)s_i=s_j^T(V^TV-I)s_i=y_j^Ty_i$ for
$i\ne j$, and by \eqref{eq:block-commutator} $M-M^T=V^TF-F^TV$, which gives
\eqref{eq:block-paige-offdiag}.  The Cauchy--Schwarz inequality gives
\[
|s_j^T(V^TF-F^TV)s_i|
=|y_j^TFs_i-(Fs_j)^Ty_i|
\le e(\|y_j\|_2+\|y_i\|_2).
\]
Together with the diagonal identity, this proves
\eqref{eq:block-paige-consequences}.  The physical vector norms are bounded
by $\sqrt{k}$ because the individual blocks are orthonormal.
\end{proof}

For scalar Lanczos before breakdown, $s=1$ and both $w_i$ and $g_i$ are
scalars.  The formulas become
$w_i=\beta_{k+1}e_k^Ts_i$ and $g_i=v_{k+1}^Ty_i$, recovering Paige's
one-vector and two-vector identities.  With a wider final block,
$g_i^Tw_i$ controls the component of $g_i$ along its residual coordinate.
The following proposition counts the remaining freedom in these equations.

\begin{proposition}[What the identities determine]
\label{prop:dof}
Suppose $s\ge1$, every $w_i$ is nonzero, and the Ritz values are distinct.
Put $Y=[\,y_1\ \cdots\ y_m\,]$.  Regard
\eqref{eq:block-paige-diag}--\eqref{eq:block-paige-offdiag} as
equations for the unknown matrix $\Gamma=W^TY=[\,g_1\ \cdots\ g_m\,]\in\R^{s\times m}$
and the unknown off-diagonal entries of the Ritz Gram matrix $Y^TY$, the
quantities $s_j^TMs_i$, $s_j^T(V^TF-F^TV)s_i$ and $w_i$ being given.
There are $m+\binom m2$ equations for
$sm+\binom m2$ unknowns.  For $s=1$ they determine $\Gamma$ and the
off-diagonal entries of $Y^TY$
uniquely.  First, $g_i=-s_i^TMs_i/w_i$, and then $y_j^Ty_i$ follows from
\eqref{eq:block-paige-offdiag}.  For $s\ge2$ the component of each $g_i$
along $w_i$ is determined, the off-diagonal entries of $Y^TY$ are determined
by $\Gamma$, and the
$m(s-1)$ components of the $g_i$ orthogonal to the corresponding $w_i$ are
unconstrained by the identities.
\end{proposition}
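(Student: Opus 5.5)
The plan is to treat the identities of Theorem~\ref{thm:block-paige} as a linear system in the unknowns and read off what each equation determines. First I count. The diagonal identity \eqref{eq:block-paige-diag} gives one scalar equation for each $i$, which is $m$ equations in total. The off-diagonal identity \eqref{eq:block-paige-offdiag} gives one equation for each unordered pair $\{i,j\}$; the relation for $(j,i)$ is the negative of the one for $(i,j)$, so there are $\binom m2$ independent equations. The unknowns are the $sm$ entries of $\Gamma$ and the $\binom m2$ off-diagonal entries $y_j^Ty_i$ of the symmetric matrix $Y^TY$. This gives the stated counts.

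Next I exploit the triangular structure of the system. The diagonal equations $g_i^Tw_i=-s_i^TMs_i$ involve only $\Gamma$. Each off-diagonal equation involves exactly one Gram entry, and with coefficient $\theta_j-\theta_i\ne0$ because the Ritz values are distinct. So for any choice of $\Gamma$ satisfying the diagonal equations, each off-diagonal equation can be solved uniquely:
\[
 y_j^Ty_i=\frac{g_j^Tw_i-g_i^Tw_j+s_j^T(V^TF-F^TV)s_i}{\theta_j-\theta_i}.
\]
In this sense the off-diagonal entries of $Y^TY$ are determined by $\Gamma$. The remaining question is which $\Gamma$ satisfy the $m$ diagonal equations.

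For $s=1$, each $w_i$ is a nonzero scalar, so $g_i=-s_i^TMs_i/w_i$ is forced. Substituting this into the display above gives the Gram entries, which proves uniqueness.

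For $s\ge2$, decompose $g_i=(\hat w_i^Tg_i)\hat w_i+g_i^\perp$, where $\hat w_i=w_i/\|w_i\|_2$ and $g_i^\perp\perp w_i$. The diagonal equation fixes $\hat w_i^Tg_i=-s_i^TMs_i/\rho_i$ and does not involve $g_i^\perp$. The constraint on each $g_i$ is one nondegenerate linear equation in $\R^s$, so the solution set is an affine subspace of dimension $s-1$ for each $i$. Taken jointly this gives $m(s-1)$ free parameters. For any such choice, the off-diagonal equations still determine the Gram entries, so these components are unconstrained by the identities.

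I expect the only delicate point to be the precise meaning of "unconstrained". I will state it as follows: the solution set of the system in the stated unknowns is parametrized bijectively by $(g_1^\perp,\ldots,g_m^\perp)\in\prod_i w_i^\perp$. I will not claim anything about whether such data are realizable by an actual computed $V$, $W$, and $F$.
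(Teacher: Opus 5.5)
Your proposal is correct and follows essentially the same route as the paper. The diagonal identity fixes only $\hat w_i^Tg_i$, distinct Ritz values let each off-diagonal identity be solved uniquely for $y_j^Ty_i$ given $\Gamma$, and the components of the $g_i$ orthogonal to the $w_i$ parametrize an $m(s-1)$-dimensional affine solution set. When you justify counting only $\binom m2$ off-diagonal equations by saying the $(j,i)$ relation is the negative of the $(i,j)$ one, state explicitly that this uses the skew-symmetry of $V^TF-F^TV$.
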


\begin{proof}
Equation \eqref{eq:block-paige-diag} fixes the one scalar component
$\hat w_i^Tg_i$.  Given $\Gamma$, \eqref{eq:block-paige-offdiag} fixes
each $y_j^Ty_i$ because $\theta_j\ne\theta_i$.  Conversely, adding to
$g_i$ any vector orthogonal to $w_i$ preserves $g_i^Tw_i$.
The resulting change in the right side of \eqref{eq:block-paige-offdiag}
is absorbed by the corresponding change of $y_j^Ty_i$, so the affine solution
set has dimension $m(s-1)$.  The count of unknowns and equations is as
stated.
\end{proof}

The count concerns the displayed algebraic equations.  Actual Lanczos
overlaps also satisfy the Gram-matrix positivity conditions and the
inter-block recurrence.  For each nonzero residual coordinate, the diagonal
identity controls one direction in $\R^s$.  The other $s-1$ components
are governed by their evolution across block steps.  Thus the final retained
width determines the dimension of this complement after deflation.
If $w_i=0$, the diagonal equation reduces to $s_i^TMs_i=0$, and all $s$
components of $g_i$ remain free in that equation.

This distinction explains why a block Ritz vector can have appreciable
overlap with the next block while its own boundary residual remains large.
The dynamics of that overlap are described by
\eqref{eq:block-overlap-recurrence}.  The identity and the recurrence
together connect local rounding errors with later spectral behavior.

\begin{figure}[t]
\centering
\includegraphics[width=\textwidth]{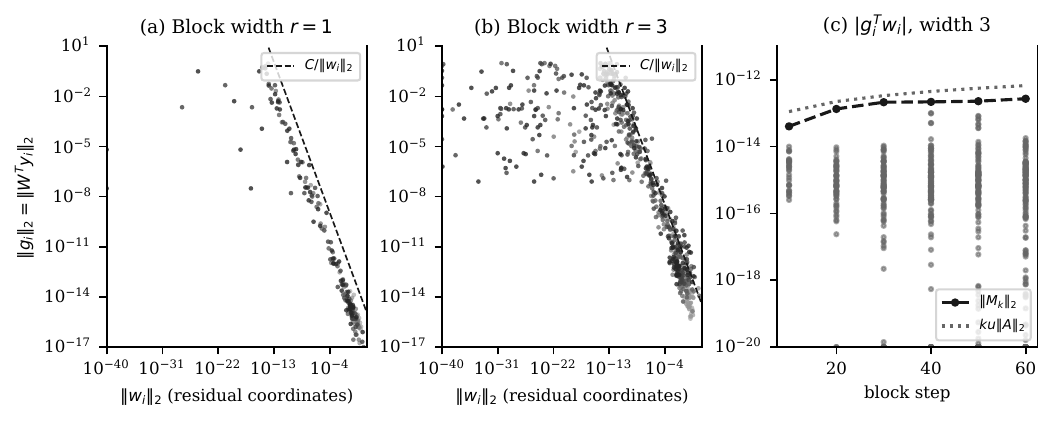}
\caption{Residual coordinates $w_i=HE_k^Ts_i$ and overlap
$g_i=W^TVs_i$ in binary64 block Lanczos with current-block re-projection
and a compact SVD, on a diagonal Strako\v{s} matrix
($n=300$, $\lambda_1=0.1$, $\lambda_n=100$, spacing parameter $\rho=0.9$).
Points are collected at $k=10,20,\ldots,60$ block steps.
(a) With width one, the full overlap satisfies the scalar bound.
(b) With width three, the full overlap can exceed that scalar bound.
Here $M_k$ denotes $M$ after $k$ blocks.  The curve uses a uniform
constant $C$, the maximum over sampled steps of $\|M_k\|_2$ plus the
largest computed Ritz-pair correction at that step.
(c) The directional inner products remain small for width three.}
\label{fig:blockpaige}
\end{figure}

Figure~\ref{fig:blockpaige} shows the difference between a directional
bound and a bound on the entire overlap.  In the scalar run,
$\|g_i\|_2\|w_i\|_2$ remains below the reference constant.  With width
three, this product can be much larger even though $|g_i^Tw_i|$ remains
small.  The components perpendicular to $w_i$ account for the difference.
For a computed eigenpair with coefficient residual
$t_i=Ts_i-\theta_i s_i$, the algebraic identity includes the correction
$t_i^TUs_i-s_i^TUt_i$.  The plotted reference constant includes the
magnitude of this correction.

\begin{corollary}[Ritz residuals and loss of orthogonality]
\label{cor:block-paige-consequences}
Under the hypotheses of Theorem~\ref{thm:block-paige}, suppose
$\rho_i>0$ and $|\hat w_i^Tg_i|\ge\tau>0$.  Then
$\rho_i\le\mu/\tau$.

For two Ritz pairs, suppose $\rho_i,\rho_j\le\rho$,
$\|g_i\|_2,\|g_j\|_2\le1$, and
$|\theta_i-\theta_j|\ge\gamma>0$.  Then
\[
 |y_j^Ty_i|
 \le\frac{2\rho+e(\|y_i\|_2+\|y_j\|_2)}{\gamma}
 \le\frac{2\rho+2\sqrt{k}\,e}{\gamma}.
\]
Every Ritz value also satisfies
\[
\lambda_{\min}(A)-\|\Delta\|_2\le\theta_i
 \le\lambda_{\max}(A)+\|\Delta\|_2,
\qquad
\operatorname{dist}(\theta_i,\sigma(A))\le\rho_i+\|\Delta\|_2,
\]
where $\Delta$ is the perturbation in Theorem~\ref{thm:block-global}.
\end{corollary}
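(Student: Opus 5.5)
The corollary has three parts, and each follows from an earlier result with little extra work. The first two come from the block Paige identities in Theorem~\ref{thm:block-paige}. The last comes from the enlarged exact relation in Theorem~\ref{thm:block-global} combined with Weyl's theorem and \eqref{eq:ritz-posteriori}.

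\emph{Residual bound.} When $\rho_i>0$, the first line of \eqref{eq:block-paige-consequences} gives $\rho_i|\hat w_i^Tg_i|\le\mu$. Dividing by $|\hat w_i^Tg_i|\ge\tau>0$ yields $\rho_i\le\mu/\tau$.

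\emph{Overlap bound.} Start from the second line of \eqref{eq:block-paige-consequences}, which applies because $\theta_i\ne\theta_j$ follows from the gap $\gamma>0$. Substitute $\|g_i\|_2,\|g_j\|_2\le1$ and $\rho_i,\rho_j\le\rho$, so the numerator becomes at most $2\rho+e(\|y_i\|_2+\|y_j\|_2)$. Replace the denominator by its lower bound $\gamma$. For the second inequality, use $\|y_i\|_2=\|Vs_i\|_2\le\|V\|_2\le\sqrt k$. This holds because $V$ is a concatenation of $k$ blocks with orthonormal columns, so $\|Vx\|_2\le\sum_i\|x_i\|_2\le\sqrt k\|x\|_2$, as noted at the end of the proof of Theorem~\ref{thm:block-paige}.

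\emph{Localization of Ritz values.} Apply Theorem~\ref{thm:block-global} to \eqref{eq:collected-block}. This gives $\widehat V$ with orthonormal columns, $\widehat V^T\widehat W=0$, and \eqref{eq:nearby-model}. Premultiplying by $\widehat V^T$ gives $T=\widehat V^T(I_k\otimes A+\Delta)\widehat V$, so $T$ is a compression of a symmetric matrix. By Cauchy interlacing, every $\theta_i$ lies in $[\lambda_{\min}(I_k\otimes A+\Delta),\lambda_{\max}(I_k\otimes A+\Delta)]$. Weyl's inequality places these extremes within $\|\Delta\|_2$ of $\lambda_{\min}(A)$ and $\lambda_{\max}(A)$, since $I_k\otimes A$ has the same spectrum as $A$.

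For the distance bound, take $\hat y=\widehat Vs_i$, which is a unit vector. Then
\[
(I_k\otimes A-\theta_iI)\hat y=\widehat W w_i-\Delta\hat y .
\]
The right side has norm at most $\|\widehat W\|_2\rho_i+\|\Delta\|_2\le\rho_i+\|\Delta\|_2$, using $\|\widehat W\|_2\le1$ from \eqref{eq:dilated-gram}. Since $I_k\otimes A$ is symmetric with spectrum $\sigma(A)$, the norm on the left is at least $\dist(\theta_i,\sigma(A))$, which gives \eqref{eq:ritz-posteriori} with $\|HE_k^Ts_i\|_2=\rho_i$.

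The main point requiring care is this last step: the distance must be measured through the enlarged model rather than through the physical residual. The physical residual carries the factor $\|y_i\|_2^{-1}$, which is uncontrolled under loss of orthogonality. The enlarged model avoids that factor because $\widehat V$ has orthonormal columns.
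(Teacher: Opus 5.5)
Your proof is correct and follows the paper's route. The first two assertions come from \eqref{eq:block-paige-consequences} together with $\|V\|_2\le\sqrt{k}$, and the last two come from the enlarged relation of Theorem~\ref{thm:block-global} via Weyl's theorem and \eqref{eq:ritz-posteriori}; you additionally spell out the compression identity $T=\widehat V^T(I_k\otimes A+\Delta)\widehat V$ and the lifted-residual derivation of \eqref{eq:ritz-posteriori}, which the paper only states.
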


\begin{proof}
The first two assertions follow from
\eqref{eq:block-paige-consequences} and $\|V\|_2\le\sqrt{k}$.
The last two bounds follow from the enlarged symmetric relation and
\eqref{eq:ritz-posteriori}.
\end{proof}

The corollary shows that a large overlap
along the unit residual coordinate requires a small boundary residual.
The physical residual is then bounded by $\rho_i+e$ before normalizing
$y_i$.  Components perpendicular to the residual coordinate enter through
the inter-block dynamics considered next.  Since the identities depend
on the collected recurrence, they also apply to the inexact products of
Corollary~\ref{cor:inexact-paige}.

\subsection{Subspace overlap and physical Ritz vectors}

To determine when a computed Ritz cluster represents redundant information,
we track the Gram matrix of the computed basis.  In the scalar case,
overlap with converged Ritz vectors underlies selective and partial
reorthogonalization \cite{Paige1976,ParlettScott1979,Simon1984Reorth}.
Grimes, Lewis, and Simon derived the corresponding block overlap recurrences
\cite{GrimesLewisSimon1994}.  Paige and Chang, Paige, and Titley-Peloquin
relate the structure of the lost orthogonality to independent and repeated
physical Ritz vectors \cite{Paige2019,ChangPaigeTitleyPeloquin2021}.
We now combine the block recurrence with a
basis-independent subspace test and a separate test in the original vector
space.

Let
\begin{equation}
 {\cal R}=AV-VT,
 \qquad G=V^TV,
 \qquad {\cal E}=TG-GT.
 \label{eq:ritz-gram-commutator}
\end{equation}
Since $A$ and $T$ are symmetric,
\begin{equation}
 {\cal E}=V^T{\cal R}-{\cal R}^TV.
 \label{eq:ritz-commutator-residual}
\end{equation}
The columns of ${\cal R}$ contain the local errors and final boundary term.
Fix an earlier physical Ritz subspace with orthonormal basis
$Y_0\in\R^{n\times d_0}$, where $1\le d_0\le n$.  Let
$\Theta_0=\Theta_0^T\in\R^{d_0\times d_0}$ and
$R_{Y_0}\in\R^{n\times d_0}$ satisfy
\[
 AY_0=Y_0\Theta_0+R_{Y_0}.
\]
Write $C_i=Y_0^TV_i\in\R^{d_0\times r_i}$ and let $F_i$ be the $i$th
block column of $F$.  Multiplying the normalized recurrence by $Y_0^T$
gives, for $1\le i<k$,
\begin{equation}
 C_{i+1}B_{i+1}
 =\Theta_0 C_i-C_iA_i-C_{i-1}B_i^T
   +R_{Y_0}^TV_i-Y_0^TF_i.
 \label{eq:block-overlap-recurrence}
\end{equation}
with $C_0B_1^T=0$ at the initial step.  The residual of the earlier
subspace and the local errors enter as the inhomogeneous terms.  They can
create overlap that subsequently propagates through the recurrence.
The following theorem turns a measured overlap into a count of nearby
Ritz values.

\begin{theorem}[Additional Ritz values from subspace overlap]
\label{thm:additional-ritz-values}
Let $1\le d\le m$, let $Z\in\R^{m\times d}$ satisfy $Z^TZ=I_d$, and let
$c\in\R$.  Use the symmetric matrix $T$ and Gram matrix $G$ defined above.
The
columns of $Z$ may be Ritz vectors from an earlier projected problem, padded
with zeros in the current coefficient space.  Define
\begin{equation}
 R_Z=(T-cI)Z,
 \qquad K_Z=Z^TGZ,
 \qquad W_Z=(I-ZZ^T)GZ.
 \label{eq:additional-ritz-definitions}
\end{equation}
Then
\begin{equation}
 (T-cI)W_Z={\cal E}Z+GR_Z-R_ZK_Z.
 \label{eq:additional-ritz-identity}
\end{equation}
Choose an integer $1\le t\le\operatorname{rank}(W_Z)$ and take $t$ leading
singular triplets of $W_Z$.  Write them as
\[
 W_ZP_W=Q_W\Sigma_W,
 \qquad Q_W^TQ_W=P_W^TP_W=I_t,
\]
where $Q_W\in\R^{m\times t}$, $P_W\in\R^{d\times t}$, and
$\Sigma_W\in\R^{t\times t}$ is diagonal with positive entries.
Set
\begin{equation}
 \varepsilon_Z=
 \left\|
 \left[\,
 R_Z\quad
 ({\cal E}Z+GR_Z-R_ZK_Z)P_W\Sigma_W^{-1}
 \right]
 \right\|_2.
 \label{eq:additional-ritz-radius}
\end{equation}
Then $T$ has at least $d+t$ eigenvalues, counted with multiplicity, in
\begin{equation}
 [c-\varepsilon_Z,c+\varepsilon_Z].
 \label{eq:additional-ritz-interval}
\end{equation}
Moreover,
\begin{equation}
 (VZ)^T(VQ_W)=P_W\Sigma_W.
 \label{eq:additional-ritz-physical-overlap}
\end{equation}
\end{theorem}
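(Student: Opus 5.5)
The plan has three parts: verify the algebraic identity, build a $(d+t)$-dimensional trial subspace, and apply a standard residual bound for symmetric eigenvalue problems.

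\emph{The identity.} Start from ${\cal E}=TG-GT$ and $R_Z=(T-cI)Z$, so that $TZ=cZ+R_Z$. Expand
\[
(T-cI)GZ=TGZ-cGZ=({\cal E}+GT)Z-cGZ={\cal E}Z+GR_Z .
\]
Since $ZZ^TGZ=ZK_Z$, we also have $(T-cI)ZK_Z=R_ZK_Z$. Subtracting gives \eqref{eq:additional-ritz-identity}. The physical overlap identity \eqref{eq:additional-ritz-physical-overlap} follows in the same direct way. We have $(VZ)^T(VQ_W)=Z^TGQ_W$. From $W_ZP_W=Q_W\Sigma_W$ we get $Q_W=W_ZP_W\Sigma_W^{-1}$. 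Because $Z^TW_Z=0$, this shows $Z^TQ_W=0$, and it also gives
\[
Z^TGQ_W=Z^TG(I-ZZ^T)GZP_W\Sigma_W^{-1}=W_Z^TW_ZP_W\Sigma_W^{-1}=P_W\Sigma_W^2\Sigma_W^{-1}=P_W\Sigma_W .
\]
This uses the symmetry of $G$ and $W_Z^TW_Z P_W=P_W\Sigma_W^2$ for right singular vectors.

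\emph{The trial subspace.} Take $X=[\,Z\ \ Q_W\,]\in\R^{m\times(d+t)}$. Its columns are orthonormal, because $Z^TZ=I$, $Q_W^TQ_W=I$, and $Z^TQ_W=0$ by the computation above (the range of $Q_W$ lies in that of $I-ZZ^T$). Next compute
\[
(T-cI)Q_W=(T-cI)W_ZP_W\Sigma_W^{-1}=({\cal E}Z+GR_Z-R_ZK_Z)P_W\Sigma_W^{-1}
\]
using the identity. Hence $(T-cI)X$ equals exactly the block matrix inside the norm in \eqref{eq:additional-ritz-radius}, and so $\|(T-cI)X\|_2=\varepsilon_Z$.

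\emph{The eigenvalue count.} The final step is the standard fact that a symmetric $T$ with an orthonormal $X$ of $p=d+t$ columns satisfying $\|(T-cI)X\|_2\le\varepsilon$ has at least $p$ eigenvalues in $[c-\varepsilon,c+\varepsilon]$. I would prove it directly. Suppose fewer than $p$ eigenvalues lie in the interval. Then the span of eigenvectors whose eigenvalues lie outside the interval has dimension greater than $m-p$, so it meets $\operatorname{range}(X)$ nontrivially. For a unit vector $x=Xy$ in that intersection we would have $\|(T-cI)x\|_2>\varepsilon$ by the spectral decomposition. This contradicts $\|(T-cI)Xy\|_2\le\varepsilon\|y\|_2=\varepsilon$. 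The inequality is strict for eigenvalues strictly outside the closed interval, so the count includes the endpoints as stated.

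I expect the main obstacle to be bookkeeping rather than any deep point: making sure $Z^TQ_W=0$ holds exactly (it relies on $t\le\operatorname{rank}(W_Z)$, so that $\Sigma_W$ is invertible), and keeping the singular-vector relation straight when deriving \eqref{eq:additional-ritz-physical-overlap}. No earlier estimate from the paper is needed beyond the definitions \eqref{eq:ritz-gram-commutator}.
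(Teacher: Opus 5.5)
Your proof is correct and follows the paper's argument: the same derivation of the identity, the same orthonormal trial basis $[\,Z\ Q_W\,]$ with $Z^TQ_W=0$ and residual equal to the matrix defining $\varepsilon_Z$, and a min--max count. Your dimension-counting contradiction is Courant--Fischer for $(T-cI)^2$ written out. The only cosmetic difference is in the overlap identity: you use $Z^TGW_Z=W_Z^TW_Z$ together with the right-singular-vector relation, whereas the paper writes $GZ=ZK_Z+W_Z$ and uses $W_Z^TQ_W=P_W\Sigma_W$.
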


\begin{proof}
From $W_Z=GZ-ZK_Z$ and \eqref{eq:ritz-gram-commutator},
\[
 (T-cI)W_Z
 ={\cal E}Z+G(T-cI)Z-R_ZK_Z,
\]
which is \eqref{eq:additional-ritz-identity}.  The range of $W_Z$ is
orthogonal to the range of $Z$, so $Z^TQ_W=0$.  Since
$W_ZP_W\Sigma_W^{-1}=Q_W$, we have
\[
 (T-cI)Q_W
 =({\cal E}Z+GR_Z-R_ZK_Z)P_W\Sigma_W^{-1}.
\]
The matrix $[\,Z\ Q_W\,]$ has $d+t$ orthonormal columns, and its residual
for $T-cI$ is the matrix in \eqref{eq:additional-ritz-radius}.
Courant--Fischer applied to $(T-cI)^2$ proves
\eqref{eq:additional-ritz-interval}.  Finally,
$GZ=ZK_Z+W_Z$ gives
\[
 Z^TGQ_W=(GZ)^TQ_W=W_Z^TQ_W=P_W\Sigma_W,
\]
which proves \eqref{eq:additional-ritz-physical-overlap}.
\end{proof}

Taking $t=\operatorname{rank}(W_Z)$ uses every overlap direction.  A smaller
$t$ allows weak directions to be omitted.  For each choice the radius can
also be evaluated as
\[
 \varepsilon_Z=\|(T-cI)[\,Z\ Q_W\,]\|_2.
\]
This expression evaluates the bound directly from the selected orthonormal
trial directions.  For a fixed ordering of the singular vectors, the radii
are nondecreasing with $t$.
At a prescribed radius, one can therefore use the largest $t$ for which
the bound fits in that interval.

For an orthogonal matrix $O\in\R^{d\times d}$, changing the basis to $ZO$
changes $W_Z$ to $W_ZO$.  Its range,
rank, singular values, and norm are therefore independent of the chosen
basis.  The same is true of the selected left singular subspace when all
singular values above a fixed threshold are retained.  The matrix also has a
chronological interpretation.  Suppose $Z$
belongs to the projected problem after block
$j$, put $X_0=[\,V_1\ \cdots\ V_j\,]Z$, and pad $Z$ with zeros at later
steps.  If
\[
 G_\ell=[\,V_1\ \cdots\ V_\ell\,]^T
          [\,V_1\ \cdots\ V_\ell\,],
 \qquad
 W_{Z,\ell}=(I-ZZ^T)G_\ell Z,
\]
then, for $\ell\ge j$,
\begin{equation}
 W_{Z,\ell}^TW_{Z,\ell}
 =W_{Z,j}^TW_{Z,j}
  +\sum_{h=j+1}^{\ell}X_0^TV_hV_h^TX_0.
 \label{eq:accumulated-ritz-overlap}
\end{equation}
Indeed, the later block rows of $G_\ell Z$ are $V_h^TX_0$ and occupy
disjoint coefficient rows.  Hence
\eqref{eq:accumulated-ritz-overlap} records the accumulated overlap of later
Lanczos blocks with the earlier physical Ritz subspace.  Each added term is
positive semidefinite, so every singular value of $W_{Z,\ell}$ is
nondecreasing while this earlier subspace is held fixed.

Physical residuals and numerical rank determine how many independent
directions the additional Ritz values represent.  For a matrix $Y$ and
an absolute threshold $\eta\ge0$, define
\[
 \operatorname{rank}_\eta(Y)
 =\#\{i:\sigma_i(Y)>\eta\}.
\]
Singular values are padded with zeros when an index exceeds the smaller
matrix dimension.  Here $\eta$ is an absolute singular-value threshold.  A relative threshold
$\eta_{\rm rel}$ corresponds to $\eta=\eta_{\rm rel}\sigma_1(Y)$.
For $q$ normalized Ritz vectors satisfying a prescribed physical-residual
tolerance, define their ghost multiplicity as
$q-\operatorname{rank}_\eta(Y)$.  This quantity measures numerical
redundancy in the original vector space.  An exact multiple eigenvalue can
have several independent Ritz vectors, for which this redundancy is zero
at a threshold below their smallest singular value.

\begin{corollary}[A physical-rank test for Ritz ghosts]
\label{cor:physical-rank-ghosts}
Let
$J=[c-\varepsilon_Z,c+\varepsilon_Z]$ be the interval in
Theorem~\ref{thm:additional-ritz-values}, and suppose $J$ lies in an isolated
spectral interval $I$ of $A$.  Let $P_I$ be the spectral projector of $A$
onto $I$, and set $r=\operatorname{rank}(P_I)$.  Choose
$q=d+t$ orthonormal eigenvectors $z_1,\ldots,z_q$ of $T$ whose eigenvalues
$\theta_1,\ldots,\theta_q$ lie in $J$.  Suppose $Vz_i\ne0$, and put
\[
 y_i=\frac{Vz_i}{\|Vz_i\|_2},
 \qquad {\cal Y}=[\,y_1\ \cdots\ y_q\,].
\]
Let $\rho\ge0$ and $\gamma>0$.  If
\begin{equation}
 \|Ay_i-\theta_i y_i\|_2\le\rho,
 \qquad
 \dist(\theta_i,\sigma(A)\setminus I)\ge\gamma>0
 \label{eq:physical-rank-hypotheses}
\end{equation}
for $1\le i\le q$ and $q>r$, then
\begin{equation}
 \sigma_{r+1}({\cal Y})
 \le\sqrt q\,\frac{\rho}{\gamma}.
 \label{eq:physical-rank-bound}
\end{equation}
Consequently, every
$\eta\ge\sqrt q\,\rho/\gamma$ satisfies
$\operatorname{rank}_\eta({\cal Y})\le r$.  At least $q-r$ of the reported
directions are then redundant in the original vector space.
\end{corollary}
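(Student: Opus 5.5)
The plan is to split each normalized Ritz vector into its component in the spectral subspace $\operatorname{range}(P_I)$ and its complement. The columns then equal a matrix of rank at most $r$ plus a small perturbation, and Weyl's singular-value inequality finishes the bound. Write
\[
 {\cal Y}=P_I{\cal Y}+(I-P_I){\cal Y}.
\]
The first term has range contained in $\operatorname{range}(P_I)$, so its rank is at most $r$ and $\sigma_{r+1}(P_I{\cal Y})=0$. Weyl's inequality $\sigma_{r+1}(X+E)\le\sigma_{r+1}(X)+\|E\|_2$ then gives
\[
 \sigma_{r+1}({\cal Y})\le\|(I-P_I){\cal Y}\|_2.
\]
The task is to bound this complementary part.

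For each column, $I-P_I$ commutes with $A$, so
\[
 (A-\theta_iI)(I-P_I)y_i=(I-P_I)(Ay_i-\theta_iy_i).
\]
On $\operatorname{range}(I-P_I)$ the operator $A-\theta_iI$ has eigenvalues $\lambda-\theta_i$ with $\lambda\in\sigma(A)\setminus I$. By \eqref{eq:physical-rank-hypotheses}, each of these has magnitude at least $\gamma$. Since $A$ is symmetric, this restricted operator is invertible with inverse norm at most $1/\gamma$. Because $I-P_I$ is an orthogonal projector, this yields
\[
 \|(I-P_I)y_i\|_2\le\frac{\|(I-P_I)(Ay_i-\theta_iy_i)\|_2}{\gamma}\le\frac{\rho}{\gamma}.
\]
Bounding the spectral norm by the Frobenius norm of the $q$ columns then gives
\[
 \|(I-P_I){\cal Y}\|_2\le\sqrt q\,\rho/\gamma,
\]
which proves \eqref{eq:physical-rank-bound}.

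For the consequences, singular values are ordered decreasingly. Hence every index $j\ge r+1$ has $\sigma_j({\cal Y})\le\sqrt q\,\rho/\gamma\le\eta$, so $\operatorname{rank}_\eta({\cal Y})\le r$. The ghost multiplicity $q-\operatorname{rank}_\eta({\cal Y})$ is therefore at least $q-r$, which is the redundancy statement.

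The roles of $J$ and Theorem~\ref{thm:additional-ritz-values} are only to guarantee that $q=d+t$ eigenvalues of $T$ lie in $J\subset I$, so that the eigenvectors $z_1,\ldots,z_q$ exist. The isolation of $I$ is used only through the gap hypothesis.

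The step I expect to need the most care is the restricted-inverse bound. One must make sure that the distance is measured from $\theta_i$ to $\sigma(A)\setminus I$ and not to $I$ itself, since $\theta_i\in J\subset I$ can lie close to eigenvalues inside $I$. The decomposition handles this, because $P_I$ absorbs all of those eigenvalues. No other obstacle seems present, since the orthonormality of the $z_i$ is not even needed beyond the normalization of the $y_i$.
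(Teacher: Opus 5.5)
Your proposal is correct and follows the paper's proof: the commuting spectral projector gives $\|(I-P_I)y_i\|_2\le\rho/\gamma$ from the gap hypothesis, the rank-$r$ part $P_I{\cal Y}$ gives $\sigma_{r+1}({\cal Y})\le\|(I-P_I){\cal Y}\|_2$, and the Frobenius norm supplies the factor $\sqrt q$. The only cosmetic difference is that the paper cites Eckart--Young where you cite Weyl's singular-value inequality, which is the same fact here.
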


\begin{proof}
The projector $P_I$ commutes with $A$.  On
$\operatorname{range}(I-P_I)$, the smallest singular value of
$A-\theta_iI$ is at least $\gamma$.  Applying $I-P_I$ to the physical Ritz
residual gives
\[
 \|(I-P_I)y_i\|_2\le\rho/\gamma.
\]
The matrix $P_I{\cal Y}$ has rank at most $r$.  With $\|\cdot\|_F$ denoting
the Frobenius norm, the Eckart--Young theorem and the columnwise residual
bounds give
\[
 \sigma_{r+1}({\cal Y})
 \le\|(I-P_I){\cal Y}\|_2
 \le\|(I-P_I){\cal Y}\|_F
 \le\sqrt q\,\rho/\gamma.
\]
This proves \eqref{eq:physical-rank-bound} and the numerical-rank statement.
\end{proof}

Equations \eqref{eq:block-overlap-recurrence},
\eqref{eq:accumulated-ritz-overlap}, and
\eqref{eq:additional-ritz-interval} connect the successive parts of the
mechanism.  Ritz residuals and local errors can create overlap with an
earlier Ritz subspace.  The overlap accumulates in $W_Z$.  When the selected
overlap directions have small residuals for $T-cI$,
Theorem~\ref{thm:additional-ritz-values} guarantees additional nearby Ritz
values.  Corollary~\ref{cor:physical-rank-ghosts} then bounds the number of
independent physical directions they can represent.
Proposition~\ref{prop:rank-selection} shows that
components in the existing space can keep singular values above the SVD
threshold even when the residual has small components orthogonal to that space.

\subsection{A matrix-free softmax-Hessian calculation}
\label{sec:hessian-experiment}

Krylov approximations also occur in randomized low-rank matrix algorithms,
where block and single-vector constructions offer different ways to obtain
a useful spectral subspace
\cite{MuscoMusco2015,MeyerMuscoMusco2024,TroppWebber2023}.
We examined residual accuracy and physical independence using the damped
empirical softmax-loss Hessian of a trained digit classifier with $4{,}096$
fixed random features.  The
ten-class output layer had $40{,}970$ parameters, so storing its
dense binary64 Hessian would require $13.43$ GB.  The matrix-free operator
has the form $A=H_{\rm loss}+\lambda_{\rm damp}I$, where $H_{\rm loss}$ is
the empirical Hessian and $\lambda_{\rm damp}>0$ is the damping shift.
We requested the sixteen
largest eigenpairs using three fixed random seeds.  Every method was
subjected to the same post-run audit with absolute physical-residual tolerance
$2.5\times10^{-9}$ and relative independence tolerance
$\sqrt{u}=1.05\times10^{-8}$.  ARPACK, PRIMME, and LOBPCG are mature
matrix-free eigensolvers with different algorithms and safeguards
\cite{ARPACKUsersGuide1998,Knyazev2001,StathopoulosMcCombs2010}.  The
rank-aware block calculation used an SVD-truncated generalized Rayleigh--Ritz
projection with a basis capped at $64$ columns and accepted only independent,
residual-qualified candidates.  The unprotected recurrence used block width
two, an initial Householder QR, two projections against the current block,
and a compact SVD of the residual using LAPACK's DGESVD.  Stored basis blocks
and their operator images were used to compute the diagnostics.  These measurements
compare individual residual accuracy with the dimension of the returned
subspace.

\begin{center}
\small
\begin{tabular}{lcc}
\toprule
method & largest audited residual & independent directions among 16\\
\midrule
ARPACK & $(1.99\text{--}3.20)\times10^{-17}$ & 16\\
PRIMME & $(1.55\text{--}2.01)\times10^{-11}$ & 16\\
LOBPCG & $(1.18\text{--}1.88)\times10^{-9}$ & 16\\
rank-aware block calculation & $(1.42\text{--}2.11)\times10^{-9}$ & 16\\
unprotected residual-only recurrence & $(0.90\text{--}2.47)\times10^{-9}$ & 5\\
\bottomrule
\end{tabular}
\end{center}

Thus the three comparison solvers and the rank-aware calculation returned
sixteen residual-qualified, independent directions.  The unprotected block
recurrence also returned sixteen vectors satisfying the residual tolerance,
but those vectors spanned only five numerical dimensions.  The numerical-rank
test therefore separated residual accuracy from independence of the returned
directions.

At the first subspace accepted by the physical-residual test, we fixed an
orthonormal basis $Y_0$ for that subspace.  After each subsequent block step, we measured
$\|Y_0^TV_{i+1}\|_2$, recording its first crossing of $\sqrt u$.
This is the physical overlap in \eqref{eq:block-overlap-recurrence}.
The table records this crossing, the first numerical dependence in the
leading Ritz-vector set, and acceptance of sixteen vectors by their residuals.

\begin{center}
\small
\begin{tabular}{cccc}
\toprule
start & first measurable overlap & first dependent leading set
      & residual-only acceptance\\
\midrule
21921 & block 9 & block 17 & block 50\\
21922 & block 8 & block 16 & block 55\\
21923 & block 8 & block 16 & block 50\\
\bottomrule
\end{tabular}
\end{center}

In all three runs, the measured overlap preceded numerical dependence of
the leading set by eight block steps.  Acceptance of sixteen vectors by
their residuals occurred more than forty steps after the overlap crossing.
The overlap measurement records the return of an earlier converged
subspace, while the final rank test identifies the five independent
directions represented by the sixteen reported vectors.

These results build on the fixed-width finite-precision recurrence of
Guarracino, Perla, and Zanetti \cite{GuarracinoPerlaZanetti2006}, the
Greenbaum-type block continuation of \v{S}imonov\'a and Tich\'y
\cite{SimonovaTichy2025}, and Chen's enlarged scalar construction
\cite{Chen2026Simple}.  Meurant and Tich\'y also analyze rank-deficient
block-CG recurrences and exact deflation
\cite{MeurantTichy2026Dubrulle}.

In the subspace test, trial directions obtained
from $(I-ZZ^T)GZ$ give an eigenvalue count through the classical min--max
principle, and the physical residuals of the associated Ritz vectors and
their spectral separation bound the number of independent vectors in the
original space.  Together with
Proposition~\ref{prop:rank-selection}, this relates the observed Ritz
multiplicity to accumulated overlap and the rank selected for the next
block.  The local recurrence and enlarged symmetric model remain available
through rank changes under their stated error bounds.
\section{Formal certificates and reproducibility}
\label{sec:certificates}

The companion checks use exact rational simulation and formal proofs in
Lean 4. The simulators reproduce the specified evaluation orders and
round-to-nearest decisions for the steepest-descent two-cycle
(Theorem~\ref{thm:sd-cycle}) and the CG/Lanczos separation
(Theorem~\ref{thm:cg-lanczos-separation}). Checks at significand precisions
$11$, $24$, $53$, and $113$ cover the rounding decisions of the corresponding
IEEE binary formats when the stated exponent-range assumptions hold.
The zero-denominator construction in
Proposition~\ref{prop:cg-curvature-loss} is also checked for each precision
from $2$ through $256$. The general results follow from the proofs in the
text.

The formal proofs use Lean 4 and Mathlib \cite{Lean4,Mathlib2020}.
Each statement records its hypotheses. In \texttt{Problem215.lean}, we
prove the periodic two-cycle and the true-residual identities under the
stated round-to-nearest assumptions. For the CG/Lanczos separation,
\texttt{Problem216.lean} takes the stored coefficients and fourth iterate
as inputs. We prove positive definiteness of the original matrix, singularity
and inconsistency of the projected problem, and exactness of the fourth
iterate, together with the condition-number identity.
The algebraic and local rounding results for the zero-denominator example
are checked in \texttt{Problem217.lean}.

For the precision analysis, we formalize scalar perturbation estimates and
the induction for accumulated error in \texttt{ConditionedNstepBound.lean},
then establish the transfer from forward error to normalized residual and
backward error. The complementary steepest-descent estimates are checked
in \texttt{ScalarConvergence.lean}. Starting from a weighted Kantorovich
inequality for a finite spectrum, we derive the exact contraction in
eigenvector coordinates and combine the assumed local error bounds used in
the proof of Lemma~\ref{lem:sd-one-step}. Induction gives the geometric
estimate used in Theorem~\ref{thm:sd-positive}. The local-error constant for
the diagonal case is also verified. For the iteration-budget analysis, the same module
contains the Lagrange interpolation inequality used in Lemma~\ref{lem:intervals}.
We also prove that acceptance of the independently evaluated residual
test implies the matrix-only backward-error bound under the stated error
estimate for that test.

The enlarged symmetric problem is formalized in three modules.
In \texttt{BlockLanczosCore.lean}, we prove the Gram identities and the
algebra of the block-column construction. The indexing of variable-width
blocks and successive powers of the finite auxiliary operator are treated
in \texttt{BlockLanczosGlobal.lean}. We prove the spectral-norm estimates
for the enlarged symmetric construction in \texttt{BlockLanczosNorms.lean}.
For the exact-arithmetic projection in
Proposition~\ref{prop:exact-clusters-deflation},
\texttt{BlockProjection.lean} starts from the polynomial recurrence and
verifies the projected relation. Orthogonal decomposition of the residual
then gives the Schur-complement formulas for the Gram matrix and rank of
the next block.

In \texttt{BlockLanczosPaige.lean}, we formalize the identities and
spectral-norm bounds in Theorem~\ref{thm:block-paige}.
The same module includes the inter-block
overlap recurrence \eqref{eq:block-overlap-recurrence} for variable block
widths and the positive-semidefinite Gram increment obtained when a later
block is appended.
In \texttt{BlockPaigeFreedom.lean}, we verify the dimension of the affine
solution set of the Paige equations in Proposition~\ref{prop:dof}.
Once the overlaps are fixed, distinct Ritz values determine the
off-diagonal Gram entries uniquely.

The spectral conclusions are checked in \texttt{RitzMultiplicity.lean}.
Using the orthonormal trial directions and residual identities proved in
\texttt{BlockLanczosPaige.lean}, we establish the eigenvalue count in
Theorem~\ref{thm:additional-ritz-values}, including the stated radius of the
enclosing interval. Eigenvalues are counted with multiplicity.
We also prove the singular-value estimate in
Corollary~\ref{cor:physical-rank-ghosts}. Under its physical-residual and
spectral-separation assumptions, we obtain an upper bound on the numerical
rank of the lifted Ritz vectors and a lower bound on the number of
redundant directions in the original space.

The Lean sources, pinned dependencies, and build instructions are available
at \url{https://github.com/math-experiments/finite-precision-krylov-lean}.
All twelve modules build together with the pinned Lean and Mathlib versions.
The numerical source accompanying this revision includes the plotted data,
fixed matrix inputs, and the scripts used to regenerate the vector figures.
Exact rounding tests and floating-point experiments are recorded separately
so that their arithmetic models can be reproduced.

OpenAI Codex with GPT-5.6-Sol and Claude Fable assisted with literature
retrieval, proof exploration, numerical and formal checks, and manuscript
editing. Earlier exploratory runs of OpenEvolve \cite{OpenEvolve} used a
fixed, curated sequence of candidate responses. The author assumes full
responsibility for the content.
\section{Conclusions}
\label{sec:conclusions}

The examples and bounds in this paper describe how rounding changes a
short-recurrence method at several distinct stages. In the steepest-descent
two-cycle, the stored residual remains equal to the true residual, yet both
repeat indefinitely. Under the sufficient condition of
Theorem~\ref{thm:sd-positive}, the stored residual instead contracts
geometrically at every step for which the arithmetic hypotheses hold.
The sharper diagonal estimate makes the dependence on conditioning more
transparent. Together, these results distinguish a genuine periodic orbit
from the more familiar stagnation caused by a residual gap.

The CG/Lanczos example shows that algebraic equivalence permits different
computed outcomes. The first loss of local orthogonality is identified
exactly, and a residual estimate relates singularity of the projected matrix
to the resolution of the smallest eigenvalue. At a different level, a common
positive spectral enclosure gives comparison bounds for enlarged exact
models and, with the stated transfer estimates, for the computed quantities.
The numerical examples complement these bounds with similar convergence
histories even after the two sets of tridiagonal coefficients have separated.

The precision theorem gives a sufficient bit count and a computable
backward-error stopping test for a fixed dense evaluation order within
$n$ updates. Its exponent-range assumptions include the intermediate scalar
calculations. The interpolation estimate describes sensitivity to small
spectral intervals, while the fixed-input experiments show how the precision
found by the search decreases when the iteration budget is enlarged.
Working precision and iteration count must therefore be considered together
when interpreting these examples.

For block Lanczos, local orthogonalization and rank truncation lead to a
variable-block recurrence with controlled errors. The symmetric enlarged
construction retains its computed coefficients. The block Paige identity
then bounds overlap with a Ritz vector along the coordinate direction of
its residual. The inter-block recurrence describes how the remaining
overlap components evolve.
These relations hold for a collected local perturbation, whether its source
is rounding or an inexact operator application.

The Gram identities connect this overlap to spectral information. In exact
arithmetic, a residual component orthogonal to the existing space determines
how many new directions can be added. In a computed recurrence, accumulated
overlap can support additional nearby Ritz values. The physical-residual and
rank estimate bounds the number of corresponding vectors that are independent
at the stated numerical-rank tolerance.
This separates repeated eigenvalue approximations from repeated information.
In the Hessian calculation, sixteen residual-qualified vectors span only
five dimensions, while the rank-aware recurrence, ARPACK, PRIMME, and LOBPCG
return sixteen independent directions. Residual accuracy and subspace
dimension describe different parts of that outcome.

The resulting interpretation joins local error analysis to the evolution of
overlap and then to the information contained in a Ritz cluster. It gives a
mathematical basis for examining orthogonalization and deflation decisions
through both convergence and independence. Sharper long-run estimates for
the inter-block recurrence would further quantify when the overlap reaches
the level required by the eigenvalue-count theorem.

\begingroup
\small
\bibliographystyle{plainnat}
\bibliography{references}
\endgroup

\end{document}